\newtheorem{thm}{Theorem}[section]
\newtheorem{cor}[thm]{Corollary}
\newtheorem{prop}[thm]{Proposition}
\newtheorem{lem}[thm]{Lemma}
\newtheorem{hyp}{Assumption}
\theoremstyle{definition}
\newtheorem{defn}[thm]{Definition}
\newtheorem{defns}[thm]{Definitions}
\newtheorem{con}[thm]{Construction}
\newtheorem{notn}[thm]{Notation}
\newenvironment{customthm}[1]
  {\innercustomthm}
  {\endinnercustomthm}
\theoremstyle{remark}
\newtheorem{rem}[thm]{Remark}
\newcommand{\mc}[1]{\mathcal{#1}}
\newcommand{\mb}[1]{\mathbb{#1}}
\newcommand{\codim}{\mathrm{codim}}
\newcommand{\proj}[1]{\mathbb{P}#1}
\newcommand{\mbar}{\overline{\mathcal{M}}}
\newcommand{\pone}{\proj{}^1}
\let\c@equation\c@thm
\numberwithin{equation}{section}
\title{Rationally Simply Connected Hypersurfaces in Orthogonal Grassmannians}
\author{Srijan Ghosh}
\begin{document}
\setlength{\parindent}{0pt}
\begin{abstract}
	In this paper, we study the moduli space of rational curves in a general low degree hypersurface in the Orthogonal Grassmannian $OG(k,n+1)$ of $k$-dimensional isotropic subspaces of an $n+1$-dimensional vector space equipped with a symmetric, non-degenerate, bilinear form. We prove rationally simply connectedness for such a general hypersurface of degree $d$ where $d$ satisfies $n+1-8k-4\ge (3k-1)d^2-d$.
\end{abstract}
\maketitle
\section{Introduction}

Let $X$ be a non-singular, projective, complex variety. Rational curves in $X$ form an important part of studying the structure of $X$. $X$ is said to be rationally connected, if for two general points on $X$, there is a rational curve passing through them. This is similar to the notion of path connectedness in topology. The following theorem is an interesting consequence of rational connectedness:
\begin{thm}\cite[Theorem 1.1]{ghs}
	Let $f:X\rightarrow B$ be a dominant, proper morphism of complex varieties with $B$ a smooth curve. If the geometric generic fiber of $f$ is rationally connected, then $f$ has a section.
\end{thm}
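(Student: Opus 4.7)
The plan is to produce a section by showing that the generic fiber $X_\eta$, which is smooth, proper and rationally connected over $K = k(B)$, has a $K$-rational point. After resolving singularities and replacing $B$ by a smooth projective compactification, it is essentially the same to work with $X$ smooth projective, mapping to $B$ projective smooth. One can further reduce to the case $B = \mathbb{P}^1$ by pulling back along a general pencil; a section over the pencil then gives a rational section of $f$, which extends by properness.

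With this setup, I would implement the ``combing and smoothing'' strategy. First, construct some multisection: a curve $C \subset X$ that maps finitely to $B$ of some degree $e \ge 1$. Such $C$ exist trivially — for instance, cut $X$ with general hyperplanes so that one obtains an irreducible curve dominating $B$. The goal becomes showing that whenever $e \ge 2$ there is another multisection $C'$ with smaller degree, so that induction on $e$ forces $e=1$, which is a section.

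To reduce the degree, I would attach very free rational curves to $C$ in general fibers of $f$. Because $X_\eta$ is rationally connected and $f$ is smooth in codimension one, a theorem of Koll\'ar--Miyaoka--Mori provides very free rational curves in the general fiber through any prescribed point; pick finitely many points $b_1,\dots, b_N$ of $B$ over which $C$ has more than one preimage, choose a preimage $p_i \in C$ over each $b_i$, and glue a very free rational curve $R_i \subset X_{b_i}$ to $C$ at $p_i$. This produces a nodal ``comb'' $\widetilde C = C \cup R_1 \cup \dots \cup R_N$ in $X$. The very freeness of the $R_i$ guarantees that the normal bundle of $\widetilde C$ in $X$ is globally generated with vanishing higher cohomology after attaching enough teeth, so $\widetilde C$ deforms to a nearby smooth irreducible curve $C'$ in $X$. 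The map $C' \to B$ has the same total degree as $\widetilde C \to B$, namely $\deg(C \to B) + \sum \deg(R_i \to B) = e$ (the $R_i$ contribute $0$), but by choosing which node to ``break'' when smoothing — equivalently, by trading monodromy of $C \to B$ against the section of $R_i \to \{b_i\}$ — one can arrange $C'$ to have strictly smaller degree over $B$ than $C$.

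The main obstacle is precisely this last step: producing a \emph{degree-lowering} smoothing. It is not enough that $\widetilde C$ deforms; one must show that inside the (positive-dimensional) family of deformations there is a member whose projection to $B$ has degree smaller than $e$. This is exactly the point at which the rational connectedness of the fibers enters nontrivially, through Graber--Harris--Starr's analysis of the monodromy action on the set of preimages of a general point of $B$ in $C'$: attaching a very free rational curve allows one to ``merge'' two sheets of $C$ in the monodromy, thereby dropping the degree. This monodromy--theoretic lemma, together with the deformation-theoretic smoothing of combs with very free teeth, is the technical heart of the proof; the rest is induction.
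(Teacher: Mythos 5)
This statement is not proved in the paper at all---it is quoted as background and attributed to Graber--Harris--Starr \cite{ghs}---so your proposal has to be measured against their original argument rather than anything in this text. Your sketch does assemble the standard ingredients up to a point: reduce to $X$ smooth projective over a smooth projective curve, start from a multisection $C$, attach very free rational curves in general fibers as teeth of a comb, and smooth the comb; a lemma about deforming such combs (for a suitable subcomb with many teeth, possibly with prescribed point conditions) is indeed the technical core of \cite{ghs}.

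The step your induction rests on, however---a ``degree-lowering smoothing''---cannot work, and it is not what Graber--Harris--Starr do. The degree of a curve over $B$ is its intersection number with the fiber class, hence is constant in any connected family of $1$-cycles or stable maps: the comb $C\cup R_1\cup\dots\cup R_N$ has degree $e$ over $B$ because the vertical teeth contribute $0$, so every deformation of it, and in particular every irreducible smoothing $C'$, again has degree exactly $e$ over $B$. No choice of ``which node to break'' changes this, and the monodromy remark does not help: an irreducible multisection already has transitive monodromy on its $e$ sheets, ``merging two sheets'' is not an operation that reduces the number of sheets, and $R_i$ maps to the single point $b_i$, so it has no section over $B$ to ``trade'' against. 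The only way the degree could drop is by exhibiting a degeneration whose horizontal part contains a component of strictly smaller degree dominating $B$, and your proposal supplies no mechanism for producing one. In the actual proof the degree is never lowered: the multisection degree stays fixed, the combs with very free teeth are used to make the multisection increasingly flexible (positivity of its normal bundle, deformations through prescribed points in prescribed fibers, irreducibility of the relevant family), and the section is then extracted from that flexible family by a separate specialization argument---this, not a degree induction, is where rational connectedness does its work. A smaller but real slip: the preliminary ``reduction to $B=\mathbb{P}^1$ by pulling back along a general pencil'' is not meaningful when $B$ is a curve (there is no pencil to pull back along), and no such reduction is needed; the argument is run over an arbitrary smooth projective curve $B$.
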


A similar question can be asked about a dominant, proper morphism $f:X\rightarrow S$ from a complex variety to a smooth surface. The notion of rationally simply connectedness provides a criterion for the existence of a generic section in this context. Informally, $X$ being rationally simply connectedness means that the space of rational curves connecting two general points in $X$ if rationally connected for large enough degree. Additionally, we need the existence of a {\it very twisting surface} in $X$. For the complete definition, see Section \ref{sectiondefnrss}. As discussed above, here is one of the most important consequences of rationally simply connectedness:

\begin{thm}\cite[Corollary 13.2]{sectionsoversurfaces}
	Let $S$ be a smooth, irreducible, projective surface over an algebraically closed field of characteristics $0$. Let $f:X\rightarrow S$ be a flat proper morphism. Assume that the geometric generic fiber of $f$ is {\it rationally simply connected}, then there exists a rational section of $f$.
\end{thm}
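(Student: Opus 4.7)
The plan is to use the Graber--Harris--Starr theorem (Theorem~1.1 of the excerpt) twice, bootstrapping the curve case up to the surface case. First I would fiber the surface $S$ by curves: after passing to a suitable birational modification $S' \to S$ (for instance, by blowing up the base locus of a Lefschetz pencil of a very ample linear system on $S$), one obtains a morphism $\pi: S' \to \pone$ whose geometric generic fiber $C_\eta$ is a smooth projective curve. Pulling the family $f$ back along $S' \to S$ and restricting to $C_\eta$ yields a proper flat family $X_\eta \to C_\eta$ over a curve whose geometric generic fiber is rationally connected, since it coincides with that of $f$ and rational simple connectedness implies rational connectedness. The Graber--Harris--Starr theorem then furnishes a section $\sigma_\eta : C_\eta \to X_\eta$.

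Next I would assemble these fiberwise sections into a relative moduli problem. Let $\Sigma \to \pone$ be the relative space of sections whose fiber over a general $t \in \pone$ parameterizes sections of $X \times_S \pi^{-1}(t) \to \pi^{-1}(t)$ in a carefully chosen, sufficiently positive numerical class. The goal is to exhibit a component of $\Sigma \to \pone$ whose geometric generic fiber is itself rationally connected. This is precisely the content built into the hypothesis: rational simple connectedness of the geometric generic fiber of $f$ asserts that, for two general evaluation points, the moduli of sections over a curve of sufficiently high degree is irreducible and rationally connected. The auxiliary \emph{very twisting surface} in the definition of rational simple connectedness is exactly what produces a seed section of $\Sigma \to \pone$ in the right component and guarantees that this component dominates $\pone$; without it, the second application of Graber--Harris--Starr could not even be set up.

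Applying the Graber--Harris--Starr theorem a second time to $\Sigma \to \pone$ then yields a section $\pone \to \Sigma$. Unwinding definitions, this produces a section of the pulled-back family $X \times_S S' \to S'$, and hence (by birational invariance of the existence of a rational section) a rational section of the original morphism $f : X \to S$.

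The hardest step is clearly the middle one: isolating a component of $\Sigma$ whose geometric generic fiber is rationally connected, and simultaneously ensuring it dominates $\pone$. Rational simple connectedness together with the very twisting surface hypothesis are engineered precisely to overcome this obstacle, but transferring that input into the requisite irreducibility and rational connectedness statements for $\Sigma$ (as opposed to the individual fibers) requires a uniform boundedness argument and control over how the moduli spaces of sections vary as $t$ moves in $\pone$. Verifying these hypotheses in specific geometric situations, such as for the hypersurfaces in orthogonal Grassmannians that are the main subject of this paper, is the substantive work that remains.
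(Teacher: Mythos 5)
This statement is not proved in the paper you are working from at all: it is quoted verbatim from de Jong--He--Starr \cite{sectionsoversurfaces} (their Corollary 13.2), and the paper only uses it as motivation. So the comparison can only be with the cited proof, and your outline does reproduce its broad architecture: fiber $S$ by a pencil of curves, get sections over the generic fiber curve from Graber--Harris--Starr, then fiber a space of sections over $\pone$ and try to apply a GHS-type argument a second time. At that level the plan is the right one.

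However, as a proof your proposal has a genuine gap, concentrated exactly where you wave at ``the substantive work that remains.'' First, GHS applies to \emph{proper} families over a curve, and the space $\Sigma\rightarrow\pone$ of sections is not proper; one must compactify (by stable sections / stable maps of the relevant arithmetic genus), single out an irreducible component that dominates $\pone$, and control its boundary, none of which is addressed. Second, and more seriously, you misstate what rational simple connectedness gives: it is a condition on the spaces $\mbar_{0,2}(X_{\bar\eta},e)$ of two-pointed \emph{rational} curves of large degree in the fiber, plus the existence of a very twisting surface; it says nothing directly about moduli of sections of $X_C\rightarrow C$ over a positive-genus curve $C$. The implication ``rationally simply connected fiber $\Rightarrow$ some component of the space of sections of sufficiently large degree over $C$ is rationally connected'' is precisely the main technical content of the cited paper: it requires the Abel map to $\mathrm{Pic}(C)$, the proof that its fibers become rationally connected in large degree via chains of lines and twisting/very twisting scrolls, and irreducibility/monodromy arguments to pin down the right component. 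Treating this as a routine ``uniform boundedness'' verification, or as something only needed in specific examples, leaves the central step of the theorem unproven; without it the second application of GHS cannot even be set up. The remaining ingredients you cite (Lefschetz pencil, birational invariance of rational sections, handling singular pencil members) are fine but peripheral.
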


Additionally, here is a recent result about approximating holomorphic maps to rationally simply connected varieties.

\begin{thm}\cite[Theorem 1.2]{tightapproximation}
	Let $B$ and $X$ be smooth projective algebraic varieties over $\mb{C}$. Assume that $X$ is rationally simply connected and $B$ is a curve. Let $U\subset B(\mb{C})$ be an open subset in the analytic topology. Let $h:U\rightarrow X$ be a holomorphic map. Then there are algebraic maps $B\rightarrow X$ that approximate $h$.
\end{thm}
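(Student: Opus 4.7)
The plan is to reduce the analytic approximation of $h : U \to X$ to a purely algebraic jet-matching problem that rational simple connectedness is designed to control. First I would fix a finite collection of points $b_1, \dots, b_r \in U$ and a large integer $N$, and record the $N$-jets $j_i$ of $h$ at each $b_i$ in local analytic coordinates. A standard diagonalization argument in $r$ and $N$ shows that if, for every finite $r, N$ and every choice of analytic jets along $U$, one can produce an algebraic morphism $f : B \to X$ matching these jets, then one can build a sequence of algebraic maps converging to $h$ uniformly on compact subsets of $U$. The problem thus becomes: given prescribed $N$-jets at finitely many points of $B$ valued in $X$, produce an algebraic $B \to X$ realizing them.

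Second, I would recast jet-matching as a section-existence problem. Start with the constant family $B \times X \to B$ and blow up along the subschemes cut out by the prescribed jets $(b_i, j_i)$, producing a projective family $Y \to B$ whose generic fiber is birational to $X$ and whose fibers over the $b_i$ encode the jet conditions. An algebraic map $B \to X$ realizing the $j_i$ is the same as a section of $Y \to B$ meeting a specified divisor. Because rational simple connectedness is preserved under the relevant birational modifications, the geometric generic fiber of $Y \to B$ remains rationally simply connected.

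Third, to actually extract the section I would leverage Theorem 1.2 from the excerpt. Since $B$ is a curve, not a surface, I would promote the setup to a family over a surface by considering $Y \times \proj{}^1 \to B \times \proj{}^1$, whose geometric generic fiber still has the required property. Theorem 1.2 then furnishes a rational section of this family, and restricting this section to $B \times \{0\}$ yields an algebraic map $B \to X$ matching the prescribed jets. The very twisting surface hypothesis built into the definition of rational simple connectedness (Section \ref{sectiondefnrss}) is precisely what provides the flexibility to interpolate increasingly high-order jet data by chains of rational curves.

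The principal obstacle will be the quantitative control as $(r, N) \to \infty$. To make the diagonalization go through, one must know that the moduli space of sections of $Y \to B$ satisfying ever more jet constraints is not merely non-empty but large enough that its images approximate $h$ analytically on compact subsets. This requires more than the qualitative existence of a section: one needs the degree of the approximating curves and the dimension of the space of such curves to grow in a controlled way, which is exactly the content of the rational connectedness of the spaces of rational curves between two general points in sufficiently high degree. The precise bookkeeping matching the analytic uniform approximation rate to the algebraic degree bounds is the delicate step, and I would expect the argument to proceed by first proving a formal version of the theorem (matching jets to all orders) and then upgrading to analytic approximation via an Artin-type algebraization or a Runge-style density argument.
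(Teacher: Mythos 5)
This statement is not proved in the paper at all: it is quoted verbatim from \cite[Theorem 1.2]{tightapproximation} as background motivation, so there is no internal proof to compare against. Judged on its own terms, your sketch has genuine gaps at both ends of the argument. First, the reduction of uniform approximation on compact subsets of $U$ to matching $N$-jets at finitely many points, followed by a ``diagonalization,'' does not work as stated: holomorphic maps into a projective variety do not form a normal family, so a sequence of algebraic maps matching ever higher jets of $h$ at finitely many points need not converge to $h$ on any compact set without additional uniform estimates. The passage from jet interpolation (weak approximation) to approximation in the analytic topology is precisely the hard new content of the cited theorem, not a soft formal step.

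Second, the section-existence step is essentially vacuous and partly ill-posed. A rational section of $Y\times\proj{}^1\rightarrow B\times\proj{}^1$ restricted to a general fiber $B\times\{t\}$ is nothing more than a rational section of $Y\rightarrow B$, and such a section already exists by the Graber--Harris--Starr theorem because a rationally simply connected variety is in particular rationally connected; so invoking the sections-over-surfaces theorem buys nothing here, and one cannot in any case ``restrict to $B\times\{0\}$'' since a rational section need not be defined along that locus. The true difficulty is to produce sections of $B\times X\rightarrow B$ with prescribed behavior at the special points $b_i$ and with analytic control on compacts; the jet conditions sit over special fibers, where rational simple connectedness of the geometric generic fiber says nothing directly, and blowing up along jet subschemes does not obviously preserve that property. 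The actual argument in \cite{tightapproximation} builds combs and exploits the rational connectedness of spaces of two-pointed curves of high degree, together with the very twisting surface hypothesis, in a quantitative deformation-theoretic way that your outline acknowledges only as ``the delicate step'' but does not supply.
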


Here is a partial list of already known families of varieties that are rationally simply connected:
\begin{enumerate}
	\item Projective space and other $G/P$ where $G$ is a semisimple algebraic group and $P$ is a maximal parabolic subgroup \cite{sectionsoversurfaces}.
	\item General low degree complete intersections in projective space \cite{nk1006} and \cite{deland}.
	\item General low degree hypersurfaces in Grassmanians \cite{findley}.
\end{enumerate}
I exapand on these results with the following list:
\begin{customthm}{\ref{mainthm}}
	Let $V$ be an $n+1$ dimensional vector space with a non-denerate, symmetric, bilinear form $Q$, let $OG(k,V)$ denote the space parameterizing $k$-dimensional isotropic subspaces of $V$. Then a general degree $d$ hypersurface  $X\subset OG(k,V)$ satisfying Assumption \ref{rsshyp}, is rationally simply connected.

\end{customthm}

We also get the following result about lines on $X$:
\begin{customthm}{\ref{irredlines}}
	Let $X\subset OG(k,V)$ be a general hypersurface of degree $d$ satisfying Assumption \ref{hyp1}. Then the evaluation map from the space of $1$-pointed lines on $X$:
	$$ev:\mbar_{0,1}(X,1)\rightarrow X$$
	has irreducible fibers of the expected dimension $n-k-d-2$ everywhere except on a closed subset of dimension at most $\frac{k(k-1)}{2}$ where the fibers have dimension $n-k-d-1$. In particular, $\mbar_{0,1}(X,1)$ is irreducible of dimension $\dim X + n-k-d-2$.
\end{customthm}

The proof of Theorem \ref{mainthm} uses existence of twisting and very twisting surfaces on a general hypersurface $X$ under Assumption \ref{rsshyp}. We prove rational connectedness of the space of degree $k+1$ curves connecting two general points of $X$ by studying the space of $(k+1)$-chains of rational lines on $X$ and then deforming. The statement for higher degrees follow using an induction argument involving the twisting surfaces.\\

The paper is structured as follows: Section \ref{sectionallaboutog} discusses the space of lines on $OG(k,n+1)$, Section \ref{sectionlinesinhypersurfaces} and \ref{sectionlinesareirred} deals with the irreducibility of the space of lines on $X$. The notion of stable maps and rationally simply connectedness is introduced in Section \ref{sectiondefnstablemaps} and \ref{sectiondefnrss}. Section \ref{sectiontwistable} covers the existence of twisting and very twisting surfaces. Sections \ref{chains} and \ref{sectiondeformfromchains} discusses rationally connectedness of chains of lines and degree $(k+1)$ rational curves.


\section{Orthogonal Grassmannian}\label{sectionallaboutog}

For the rest of the paper, we will assume the ground field is $\mb{C}$. Let $V$ be an $n+1$-dimensional vector space with a non-degenerate symmetric bilinear form $Q$ on it. Let $k$ be a fixed integer. We say a $k \le \frac{n+1}{2}$-dimensional subspace $W\subset V$ is isotropic if $Q(w_1,w_2)=0$ for all vectors $w_1$ and $w_2$ in $W$.\\
Let $Gr(k,n+1)$ denote the Grassmannian of $k$-dimensional subspaces in $V$. Let $OG(k,n+1)\subset Gr(k,n+1)$ be the locus of $k$-dimensional subspaces isotropic with respect to $Q$. If $\mc{S}_k$ is the tautological vector bundle on $Gr(k,n+1)$, $Q$ corresponds to a section of the rank $\frac{k(k+1)}{2}$ vector bundle $Sym^2(\mc{S}_k^*)$. $OG(k,n+1)$ is exactly the vanishing locus of this section. Using local charts on the grassmannian, we see that this locus has the expected co-dimension. Hence the co-dimension of $OG(k,n+1)$ in $Gr(k,n+1)$ is $\frac{k(k+1)}{2}$ and 
$$dim(OG(k,n+1))=\frac{k(2n-3k+1)}{2}$$
\begin{rem}
A priori the above discussion depends on the form $Q$. However, since all non-degenerate symmetric bilinear forms are equivalent (up to a change of basis) $OG(k,n+1)$ is well-defined up to isomorphism.
\end{rem}
Hereon, we will denote $OG(k,n+1)$ by $OG$ (keeping $k$ and $n$ fixed). We will also use $\mc{S}_k$ for the tautological rank $k$ vector subbundle of $V\otimes \mc{O}_{Gr(k,n+1)}$ on $Gr(k,n+1)$ pulled back to $OG$.
\begin{lem}
	Assuming $n\ge 2k+1$, $OG$ is a smooth, irreducible, projective, Fano variety with $Pic(OG)\cong\mathbb{Z}$. A generator for $Pic(OG)$ is given by the ample line bundle $\mc{O}_{OG}(1)=\det(\mc{S}_k^*)\cong\mathcal{O}_{Gr(k,n+1)}(1)|_{OG}$.
\end{lem}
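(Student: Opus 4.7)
The plan is to establish each property by exploiting the identification of $OG$ with a rational homogeneous space $SO(V)/P$, then to compute the canonical class via adjunction using the zero-locus description already given.

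First, I would prove smoothness, irreducibility, and projectivity simultaneously. Projectivity is automatic since $OG$ is a closed subscheme of the projective Grassmannian. For smoothness and irreducibility, I would invoke Witt's extension theorem: any isometry between non-degenerate subspaces (or between isotropic subspaces inside a non-degenerate ambient space) extends to an isometry of $V$. Under the hypothesis $n \ge 2k+1$, each isotropic $k$-plane sits inside a $(2k+1)$-dimensional non-degenerate subspace and has a complementary isotropic $k$-plane available, so $SO(V)$ acts transitively on the set of isotropic $k$-planes. The stabilizer of a fixed isotropic $k$-plane is a parabolic subgroup $P \subset SO(V)$, and hence $OG \cong SO(V)/P$ is a smooth, irreducible, projective homogeneous variety. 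The hypothesis also excludes the degenerate case $k = (n+1)/2$ where $OG$ has two connected components.

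Second, for the Fano property I would apply the adjunction formula to the realization of $OG$ as the vanishing locus of a regular section of $Sym^2(\mc{S}_k^*)$ of the expected codimension $\frac{k(k+1)}{2}$. This yields
\[
K_{OG} = \left(K_{Gr(k,n+1)} + \det Sym^2(\mc{S}_k^*)\right)\big|_{OG}.
\]
Standard formulas give $K_{Gr(k,n+1)} \cong \mc{O}(-(n+1))$ and $\det Sym^2(\mc{S}_k^*) \cong \det(\mc{S}_k^*)^{\otimes(k+1)} \cong \mc{O}(k+1)$, hence $K_{OG} \cong \mc{O}_{OG}(k-n)$. Since $n \ge 2k+1 > k$, the anticanonical is a positive multiple of $\mc{O}_{OG}(1)$, which is ample as a pullback of the Plücker line bundle; so $OG$ is Fano.

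Third, for the Picard group I would use that $P$ is a maximal parabolic subgroup of $SO(V)$, corresponding to removing a single node from the Dynkin diagram (type $B_m$ or $D_m$). For any such $G/P$ it is standard that $\mathrm{Pic}(G/P)\cong \mathbb{Z}$, generated by the line bundle associated to the fundamental weight dual to the omitted node. It remains to check that $\det(\mc{S}_k^*)|_{OG}$ is this primitive generator rather than a nontrivial multiple; I would verify this by producing a curve $C \subset OG$ (for instance, a line in a Schubert cell, to be treated in the next section) on which $\det(\mc{S}_k^*)$ restricts to $\mc{O}(1)$.

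The main obstacle I anticipate is the last point: identifying $\det(\mc{S}_k^*)|_{OG}$ as the \emph{primitive} generator of $\mathrm{Pic}(OG)$ rather than a power of it. The cleanest route is to exhibit an explicit line in $OG$ of Plücker degree one, which can be done by choosing a flag $0 \subset W_1 \subset W_{k+1}$ with $W_{k+1}$ isotropic and taking the pencil of isotropic $k$-planes between them; this simultaneously verifies the Picard generator and sets up the geometry of lines needed in Section \ref{sectionallaboutog}.
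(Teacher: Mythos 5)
Your proposal is correct, and in fact the paper offers no proof of this lemma at all: it is stated as a classical fact about $OG$, so there is nothing to compare step by step. The one place where your argument overlaps with something the paper does prove is the adjunction computation: your formula $K_{OG}\cong\mc{O}_{OG}(k-n)$, obtained from $K_{Gr(k,n+1)}\cong\mc{O}(-(n+1))$ and $\det Sym^2(\mc{S}_k^*)\cong\mc{O}(k+1)$ via the zero-locus description, is exactly the paper's proof of the \emph{next} lemma ($\deg K_{OG}=-(n-k)$), so that step is consistent with the text. The rest of your route (transitivity of $SO(V)$ via Witt's theorem to get smoothness and irreducibility, $\mathrm{Pic}(G/P)\cong\mathbb{Z}$ for a maximal parabolic, and primitivity of $\det(\mc{S}_k^*)|_{OG}$ by exhibiting a degree-one curve) is sound, and the primitivity check is genuinely needed here: for the maximal isotropic Grassmannian the Pl\"ucker bundle is \emph{not} primitive, and your construction of the pencil correctly uses the hypothesis $n\ge 2k+1$ through the existence of an isotropic $(k+1)$-plane, matching the description of lines in Section \ref{sectionallaboutog}. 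Two small points to fix in the write-up: the flag defining the pencil should be $W_{k-1}\subset W_{k+1}$ with $\dim W_{k-1}=k-1$ (your ``$W_1$'' would give a $\proj{}^{k-1}$ of $k$-planes, not a line), and in the primitivity argument you should say explicitly that $\det(\mc{S}_k^*)|_{OG}$ is ample, so that writing it as $M^{\otimes m}$ with $M$ the ample generator forces $m\ge 1$ and then $m=1$ from degree $1$ on the pencil.
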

\begin{lem}
	With respect to the ample bundle described above, $deg(K_{OG})=-(n-k)$.
\end{lem}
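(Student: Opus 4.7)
The plan is to use the adjunction formula for the inclusion $OG\subset Gr(k,n+1)$, which exhibits $OG$ as the smooth zero locus of a section of $Sym^2(\mathcal{S}_k^*)$ (as noted just before the lemma). Since this vanishing happens with the expected codimension, the conormal bundle is $Sym^2(\mathcal{S}_k^*)^\vee|_{OG}$, so adjunction gives
\begin{equation*}
K_{OG} \;=\; \bigl(K_{Gr(k,n+1)} \otimes \det Sym^2(\mathcal{S}_k^*)\bigr)\big|_{OG}.
\end{equation*}

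The next step is to evaluate each factor in terms of $\mathcal{O}_{Gr}(1) = \det(\mathcal{S}_k^*)$, using the fact (from the previous lemma) that this line bundle restricts to the ample generator $\mathcal{O}_{OG}(1)$ of $\mathrm{Pic}(OG)$. For the Grassmannian, it is standard that $K_{Gr(k,n+1)} = \mathcal{O}_{Gr}(-(n+1))$. For the symmetric square, I would argue via Chern roots: if $a_1,\dots,a_k$ are the Chern roots of $\mathcal{S}_k^*$, then $Sym^2(\mathcal{S}_k^*)$ has Chern roots $\{a_i + a_j : i\le j\}$, and therefore
\begin{equation*}
c_1\bigl(Sym^2(\mathcal{S}_k^*)\bigr) \;=\; \sum_{i\le j}(a_i+a_j) \;=\; (k+1)\sum_i a_i \;=\; (k+1)\,c_1(\mathcal{S}_k^*),
\end{equation*}
so $\det Sym^2(\mathcal{S}_k^*) = \mathcal{O}_{Gr}(k+1)$.

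Combining these and restricting to $OG$ yields $K_{OG} = \mathcal{O}_{OG}(-(n+1)+(k+1)) = \mathcal{O}_{OG}(-(n-k))$, which gives the claimed degree. The only potentially subtle point is the Chern root calculation for $\det Sym^2$, but since $\mathcal{S}_k$ has rank $k$ this is a short symbolic identity rather than a real obstacle; the rest is a direct invocation of adjunction together with the canonical bundle of the ambient Grassmannian.
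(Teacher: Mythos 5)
Your proof is correct and follows essentially the same route as the paper: adjunction for the zero locus of the section of $Sym^2(\mathcal{S}_k^*)$ (equivalently the normal bundle exact sequence for $T_{OG}$), together with $\deg K_{Gr(k,n+1)}=-(n+1)$ and $\det Sym^2(\mathcal{S}_k^*)=\mathcal{O}(k+1)$. The Chern root computation you spell out is exactly the degree-$(k+1)$ fact the paper quotes, so there is no substantive difference.
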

\begin{proof}
	On $Gr(k,n+1)$, $OG$ is gives as the zero set of a regular section of $Sym^2(\mc{S}_k^*)$. Hence we have a short exact sequence:
	$$0\rightarrow T_{OG} \rightarrow T_{Gr(k,n+1)}\arrowvert_{OG} \rightarrow Sym^2(\mc{S}_k^*) \rightarrow 0$$
	Therefore $det(T_{OG})=det(T_{Gr(k,n+1)})\arrowvert_{OG} \otimes det(Sym^2(\mc{S}_k^*))^*$. We know that $det(T_{Gr(k,n+1)})$ has degree $n+1$. Since $det(\mc{S}^*_k)$ has degree 1 and $\mc{S}^*_k$ has rank $k$,  $det(Sym^2(\mc{S}_k^*))$ has degree $k+1$. This proves the lemma.
\end{proof}
\subsection{Lines in Orthogonal grassmannians}\label{desclines}
Lines will refer to rational curves that have degree $1$ with respect to $\mc{O}_{OG}(1)$. Let $F_{0,1}(OG)$ denote the space parameterizing lines in $OG$ with a marked point. There is a natural evaluation map $ev: F_{0,1}(OG) \rightarrow OG$. The fiber of this map over a point $p$ is the space of lines passing through $p$, denoted hereon by $F_p(OG)$. We will give an explicit description of these two spaces.
\begin{prop}[]
	Let $p\in OG$ be a point corresponding to an isotropic subspace $W\subset V$. Then there is an isomorphism $F_p(OG)\cong \proj{}^{k-1}\times H$ where $H$ is a non-singular quadric hypersurface in $\proj{}^{n-2k}$.
\end{prop}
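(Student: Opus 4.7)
The plan is to pull the statement back to the standard parameterization of lines in $Gr(k,n+1)$ and then impose the isotropic condition. Recall that a line in $Gr(k,n+1)$ through the point $[W]$ is determined by a pair $(W',W'')$ with $W'\subset W\subset W''$, $\dim W'=k-1$, $\dim W''=k+1$, and the line itself consists of the pencil of $k$-planes $W_t$ with $W'\subset W_t\subset W''$. Thus lines through $[W]$ in $Gr(k,n+1)$ are parameterized by $\mathbb{P}(W^*)\times\mathbb{P}(V/W)\cong \mathbb{P}^{k-1}\times\mathbb{P}^{n-k}$.

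Next, I would identify precisely when such a line lies in $OG$. Writing $W_t=W'+\langle v\rangle$ for various $v\in W''\setminus W'$, the isotropy of $W_t$ for every $t$ forces $Q(v,w')=0$ for all $w'\in W'$ and $Q(v,v)=0$ for every such $v$. Running $v$ over all vectors of $W''$ not in $W'$, this is equivalent to the condition that $W''$ itself be isotropic. Note that the choice of $W'$ is then unconstrained: any hyperplane of $W$ works, giving a factor $\mathbb{P}(W^*)\cong\mathbb{P}^{k-1}$ independently of $W''$.

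It remains to identify the parameter space of isotropic $(k+1)$-dimensional subspaces $W''$ containing $W$. Such $W''$ correspond to lines $\langle \bar v\rangle \subset W^\perp/W$ for which the lift is isotropic. Since $W$ is isotropic of dimension $k$, we have $W\subset W^\perp$ and $\dim(W^\perp/W)=n+1-2k$. The bilinear form $Q$ descends to a well-defined symmetric bilinear form $\bar Q$ on $W^\perp/W$, and I would check that $\bar Q$ is non-degenerate: if $v\in W^\perp$ pairs trivially with all of $W^\perp$, then by non-degeneracy of $Q$ on $V$ and the identity $(W^\perp)^\perp=W$ we conclude $v\in W$, so $\bar v=0$. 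Consequently the isotropic lines in $W^\perp/W$ form a non-singular quadric $H\subset\mathbb{P}(W^\perp/W)\cong\mathbb{P}^{n-2k}$.

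Combining the two independent choices gives the desired isomorphism $F_p(OG)\cong\mathbb{P}^{k-1}\times H$. The only substantive point is verifying that the apparently weaker isotropy condition on the individual members $W_t$ of the pencil forces the stronger statement that $W''$ is isotropic; everything else is a bookkeeping exercise involving the orthogonal complement and non-degeneracy of the descended form.
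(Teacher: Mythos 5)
Your proposal is correct and follows essentially the same route as the paper: identify a line through $[W]$ with a pair $W_{k-1}\subset W\subset W_{k+1}$, observe that isotropy of the whole pencil forces $W_{k+1}$ to be isotropic, and then parameterize the two choices by $\mathbb{P}(W^*)$ and the non-singular quadric of $\bar Q$-isotropic lines in $\mathbb{P}(W^\perp/W)\cong\mathbb{P}^{n-2k}$. Your extra verification that pointwise isotropy of the $W_t$ implies $W_{k+1}$ is isotropic (a short polarization argument in characteristic $0$) is a detail the paper simply asserts, so nothing is missing.
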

\begin{proof}
Note that a line in $OG$ is given by subspaces $W_{k-1} \subset W_{k+1}$ of dimensions $(k-1)$ and $(k+1)$ such that every $k$-dimensional subspace of $W_{k+1}$ is isotropic. This in particular implies that $W_{k+1}$ is isotropic as well. A marked point on this line is given by a subspace $W_k$ of dimension $k$ such that $W_{k-1} \subset W_k \subset W_{k+1}$. Hence $F_{0,1}(OG)$ can be described as the closed locus inside $OG(k-1,n+1)\times OG(k,n+1)\times OG(k+1,n+1)$ where the respective tautological bundles satisfy $\mc{S}_{k-1}\subset \mc{S}_k \subset \mc{S}_{k+1}$. The map $ev$ is the projection onto the second factor.\\
For a fixed $k$-dimensional isotropic subspace $W\subset V$, the choice of a $(k-1)$-dimensional subspace contained in $W$ is given by $\proj{W^*}$. A $(k+1)$-dimensional isotropic subspace $W_{k+1}$ containing $W$ defines a point $[v_{k+1}]$ in $\mathbb{P}(W^{\perp}/W)$. Moreover, the original bilinear form $Q$ induces a non-degenerate, symmetric bilinear form $\bar{Q}$ on $W^{\perp}/W$. For $W_{k+1}$ to be isotropic, we must have $\bar{Q}(v_{k+1},v_{k+1})=0$. This gives us a quadratic hypersurface $H_W$ in $\mathbb{P}(W^{\perp}/W)$ (Note that this has dimension $n-2k-1$). Therefore we have, $F_p(OG)= \mathbb{P}(W^*)\times H_W \cong \mathbb{P}^{k-1}\times H$ where $H$ is a degree $2$, non-singular hypersurface in $\mathbb{P}^{n-2k}$.
\end{proof}

\section{Lines in Hypersurfaces}\label{sectionlinesinhypersurfaces}

We'll state the following assumption for later use:
\begin{hyp}\label{hyp1}
Let $X$ be a degree $d$ hypersurface in $OG=OG(k,n+1)$ where $n \ge 6k$ and $d \le n - 2k - 2$
\end{hyp}
\begin{lem}\label{zerosetofvectorbundle}
Let $X$ be a hypersurface of degree $d$ in $OG$ and fix a point $p\in X$. The locus of lines through $p$ in $X$, given by, $F_p(X)\subset F_p(OG)\cong \proj{}^{k-1}\times H$ is the zero locus of a section of a vector bundle $F$ with a filtration:
	$$F=F_1\supset F_2 \supset \ldots \supset F_{d+1}=0$$
	where $F_i/F_{i+1}\cong \mc{O}(i,i)$.
\end{lem}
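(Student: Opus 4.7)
The plan is to work on the universal line through $p$. Let $B := F_p(OG) \cong \mathbb{P}^{k-1}\times H$, and let $\pi:\mathcal{P}\to B$ be the universal pointed line through $p$ (a $\mathbb{P}^1$-bundle), with $\sigma:B\to\mathcal{P}$ the section that places the marked point at $p$, and $e:\mathcal{P}\to OG$ the evaluation map; these are obtained from the description of $F_{0,1}(OG)$ in Section \ref{desclines} by restricting the second marked point and evaluating. The hypersurface $X$ is cut out by some $s\in H^0(OG,\mathcal{O}_{OG}(d))$ with $s(p)=0$. A line $L_b$ corresponding to $b\in B$ lies in $X$ iff $(e^*s)|_{\pi^{-1}(b)}\equiv 0$, so $F_p(X)$ is the scheme-theoretic zero locus of the pushed-down section $\pi_*e^*s$ of $\pi_*e^*\mathcal{O}_{OG}(d)$.

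I will produce the filtration using vanishing order at $\sigma$. For $0\le i\le d+1$ set
\[
F_i := \pi_*\bigl(e^*\mathcal{O}_{OG}(d)\otimes\mathcal{O}(-i\sigma)\bigr),
\]
so that fiberwise $F_i$ is the subspace of sections of $\mathcal{O}_{\mathbb{P}^1}(d)$ vanishing to order $\ge i$ at $p$; note $F_0$ has rank $d+1$ and $F_{d+1}=0$. Twisting $0\to\mathcal{O}(-(i+1)\sigma)\to\mathcal{O}(-i\sigma)\to\sigma_*\sigma^*\mathcal{O}(-i\sigma)\to 0$ by $e^*\mathcal{O}_{OG}(d)$, pushing down by $\pi$, and using that $R^1\pi_*$ of the leftmost sheaf vanishes for $i\le d-1$ (fiberwise degree $d-i-1\ge 0$ on $\mathbb{P}^1$), I obtain
\[
0\to F_{i+1}\to F_i\to \sigma^*e^*\mathcal{O}_{OG}(d)\otimes(N_{\sigma/\mathcal{P}}^\vee)^{\otimes i}\to 0,
\]
using $\sigma^*\mathcal{O}(\sigma)=N_{\sigma/\mathcal{P}}$. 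Because $e\circ\sigma$ is the constant map to $p$, the factor $\sigma^*e^*\mathcal{O}_{OG}(d)$ is trivial, so $F_i/F_{i+1}\cong (N_{\sigma/\mathcal{P}}^\vee)^{\otimes i}$.

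The main computation is to identify $N_{\sigma/\mathcal{P}}$. Since $\sigma$ is a section of $\pi$, this normal bundle equals the relative tangent bundle of $\pi$ restricted along $\sigma$, i.e.\ the tangent space of the fiber line at $\sigma(b)$. In the Grassmannian description of lines through $W$, one has $T_W L_{W_{k-1},W_{k+1}}\cong\mathrm{Hom}(W/W_{k-1},W_{k+1}/W)$. Using the isomorphism $B\cong\mathbb{P}(W^*)\times H_W$ from the preceding proposition, the tautological short exact sequence on $\mathbb{P}(W^*)$ identifies $W/W_{k-1}$ with $\mathcal{O}_{\mathbb{P}^{k-1}}(1)\boxtimes\mathcal{O}_H$, while the restriction of the tautological inclusion on $\mathbb{P}(W^\perp/W)$ to the quadric $H_W$ identifies $W_{k+1}/W$ with $\mathcal{O}_{\mathbb{P}^{k-1}}\boxtimes\mathcal{O}_H(-1)$. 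Taking Hom gives $N_{\sigma/\mathcal{P}}=\mathcal{O}(-1,-1)$, and therefore $F_i/F_{i+1}\cong(N_{\sigma/\mathcal{P}}^\vee)^{\otimes i}=\mathcal{O}(i,i)$ for $1\le i\le d$.

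Finally, since $s(p)=0$, the section $\pi_*e^*s$ of $F_0$ lies in the subbundle $F_1$, and its zero locus there is exactly $F_p(X)$; setting $F:=F_1$ completes the construction. The one step that actually requires care is the normal bundle identification, which hinges on matching the two tautological line bundles coming from $\mathbb{P}(W^*)$ and from $H_W\subset\mathbb{P}(W^\perp/W)$ to the universal flag data $(W_{k-1}\subset W\subset W_{k+1})$ parametrizing lines through $p$; everything else is a routine pushforward computation on the $\mathbb{P}^1$-bundle $\pi$.
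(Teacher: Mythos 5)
Your proof is correct, and it takes a mildly different route from the paper's, so a brief comparison is in order. The paper identifies $\pi_*e^*\mathcal{O}_{OG}(d)$ with $\mathrm{Sym}^d(E^*)\otimes M$ for $E=\mc{S}_{k+1}/\mc{S}_{k-1}$, computes the twist $M=\det(\mc{S}_{k-1}^*)^d$ by pulling back along the marked-point section, and then filters $\mathrm{Sym}^d(E^*)$ using the two-step filtration $0\to N_2^*\to E^*\to N_1^*\to 0$, identifying $N_1^*=\mc{O}(-1,0)$, $N_2^*=\mc{O}(0,1)$, $M=\mc{O}(d,0)$ on $\proj{}^{k-1}\times H$. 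You instead filter the pushforward by order of vanishing along the section, $F_i=\pi_*\bigl(e^*\mathcal{O}_{OG}(d)\otimes\mathcal{O}(-i\sigma)\bigr)$, so all the graded pieces become powers of the conormal bundle of $\sigma$ (the factor $\sigma^*e^*\mathcal{O}_{OG}(d)$ being trivial since $e\circ\sigma$ is constant), and the whole lemma reduces to the single computation $N_{\sigma/\mathcal{P}}\cong\mathrm{Hom}(W/W_{k-1},W_{k+1}/W)\cong\mc{O}(-1,-1)$, which you carry out correctly via the flag description ($W/W_{k-1}\cong\mc{O}(1,0)$ from the tautological sequence on $\mathbb{P}(W^*)$, $W_{k+1}/W\cong\mc{O}(0,-1)$ from the tautological line on $H_W\subset\mathbb{P}(W^\perp/W)$); this agrees with the paper's $N_1^\vee\otimes N_2$. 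The two filtrations in fact coincide (the divisor $\sigma$ is cut out by the composition $\mathcal{O}_{\mathbb{P}(E)}(-1)\to E\to N_2$, so vanishing order along $\sigma$ reproduces the $\mathrm{Sym}^d$ filtration), but your packaging avoids computing $M$ separately and makes the appearance of $\mc{O}(i,i)$ conceptually transparent as powers of the conormal bundle. One small inaccuracy: you invoke $R^1\pi_*$-vanishing only for $i\le d-1$ (fiber degree $\ge 0$), but the vanishing $H^1(\pone,\mc{O}(d-i-1))=0$ holds for $i\le d$, and you do need the case $i=d$ (fiber degree $-1$) to conclude $F_d\cong\mc{O}(d,d)$; since the same vanishing covers it, this is a matter of stating the range correctly rather than a gap. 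You also implicitly use cohomology and base change to see each $F_i$ is locally free of rank $d+1-i$, which is routine and worth a sentence.
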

\begin{proof}
	From \ref{desclines}, there are three tautological vector bundles on $F_{0,1}(OG)$, namely $\mc{S}_{k-1} \subset \mc{S}_k \subset \mc{S}_{k+1}$. Let $E=\mc{S}_{k+1}/\mc{S}_{k-1}$ and $L=\mc{S}_k/\mc{S}_{k-1}$. Let $i:L \hookrightarrow E$ be the injection of vector bundles. By construction, $\pi:\mb{P}(E)\rightarrow F_{0,1}(OG)$ is the universal family of lines and $i$ induces the section $t:F_{0,1}(OG) \rightarrow \mb{P}(E)$ corresponding to the marked point. Let $h:\mb{P}(E) \rightarrow OG$ denote the morphism mapping each line to its image in $OG$. Let $X$ be a degree $d$ hypersurface in $OG$  defined by $f_X \in \Gamma(OG,\mc{O}(d))$. Let $f^{\prime}_X \in \Gamma(\mb{P}(E),h^*\mc{O}_{OG}(d))$ and $\tilde{f}_X \in \Gamma(F_{0,1}(OG),\pi_*h^*\mc{O}_OG(d))$ be the pull-back and push-forward sections. The vanishing locus of $\tilde{f}_X$ is the set of points $[(l,p)]\in F_{0,1}(OG)$ such that the zero locus of $f^{\prime}_X$ contains the entire fiber $\pi^{-1}([(l,p)])$. That is, the pointed line corresponding to $[(l,p)]$ lies in $X$. Therefore we can identify $F_{0,1}(X)\subset F_{0,1}(OG)$ with the vanishing locus of $\tilde{f}_X$.\\\\
	Since $h^*\mc{O}_{OG}(d)$ is a line bundle with positive degree $d$ on the fibers of $\pi$, the higher direct images vanish and $\pi_*h^*\mc{O}_{OG}(d)$ is a locally free sheaf. Since $\mb{P}(E)$ is a $\mb{P}^1$ bundle over $F_{0,1}(OG)$, we have:
	$$h^*\mc{O}_{OG}(d)=\mc{O}_{\mb{P}(E)}(d)\otimes \pi^*M \implies \pi_*h^*\mc{O}_{OG}(d)=\pi_*\mc{O}_{\mb{P}(E)}(d)\otimes M=Sym^d(E^*)\otimes M$$
	where $M$ is a line bundle on $F_{0,1}(OG)$. Consider the composition $h\circ t: F_{0,1}(OG) \rightarrow OG$. This is the same as the map $ev:  F_{0,1}(OG) \rightarrow OG$ induced by $\mc{S}_k$. Therefore 
	$$t^*(h^*\mc{O}_{OG}(d))=det(\mc{S}_k^*)^d\implies t^*\mc{O}_{\mb{P}(E)}(d)\otimes M=det(\mc{S}_k^*)^d$$
	Since $t$ is given by the inclusion $i:L\hookrightarrow E$, $t^*\mc{O}_{\mb{P}(E)}(d)=(L^*)^d$. Hence:
	$$M=det(\mc{S}_k^*)^d\otimes L^d \implies M=det(\mc{S}_{k-1}^*)^d$$
	Let $N_1=\mc{S}_k/\mc{S}_{k-1}$ and $N_2=\mc{S}_{k+1}/\mc{S}_{k}$. Then we have a short exact sequence of locally free sheaves: 
	$0\rightarrow N_2^* \rightarrow E^* \rightarrow N_1^* \rightarrow 0$.
	Taking the $d$-fold symmetric product we get a filtration $$Sym^d(E^*)=E_0\supset E_1 \supset E_2 \ldots \supset E_{d+1}$$ such that: $$E_i/E_{i+1}=(N_2^*)^i\otimes (N_1^*)^{(d-i)}$$
	Restricting to $F_p(OG)$,  $\mc{S}_k|_{F_p(OG)}$ is a trivial vector bundle. Using the identification $F_p(OG)\cong \mb{P}^{k-1}\times H$, we have $N_1^*=det(S_{k-1})=\mc{O}(-1,0), N_2^*=det(S_{k+1}^*)=\mc{O}(0,1), M=\mc{O}(d,0)$. Restricting the above filtration and tensoring with $M$, we have: $F_p(X) \subset F_p(OG)$ is given by a section $\tilde{f}$ of a vector bundle $F$ with a filtration 
	$$F=F_0\supset F_1 \supset \ldots \supset F_{d+1}=0$$ such that: $F_i/F_{i+1}\cong \mc{O}(i,i)$. In particular the first factor is the structure sheaf $\mc{O}(0,0)$ and $\tilde{f}$ restricts to the zero section of $\mc{O}(0,0)$ if and only if $p\in X$. Therefore, assuming $p\in X$, we may replace $F$ by $F_1$ to finish the proof.
\end{proof} 
Note that $F_p(X)$ has expected co-dimension $d$ in $F_p(OG)$. Therefore the expected dimension of $F_p(X)$ is $n-k-d-2$.
\begin{notn}
	For $X$ a hypersurface of degree $d$ in $OG$, the expected dimension of $F_p(X)$ is $n-k-d-2$ and will be denoted $E(X,p)$
\end{notn}
\begin{rem}
	Note that in the above filtration, the relevant $Ext$ groups vanish and hence we can chose a splitting:
	$$F \cong \mc{O}(1,1)\oplus \mc{O}(2,2) \oplus \ldots \oplus \mc{O}(d,d)$$
\end{rem}

Denote by $\mc{X}$ the universal family of degree $d$ hypersurfaces in $OG$. Let $V_d$ denote the vector space of degree $d$ hypersurfaces and let $\pi_1:\mc{X} \rightarrow \mb{P}(V_d)$ and $\pi_2:\mc{X}\rightarrow OG$ be the projection maps. Now, let us consider the relative space of rational lines in hypersurfaces. Define:
$$F_{0,1}\mathcal{(X)}:=\{(p,l,X) | p\in l \subset X\text{ where $p$ is a point, $l$ is a line and $X$ is a hypersurface}\}$$

Inside $\mc{X}$ define the locus $\mc{Y}_0$ to be the locus of points $(p,X)$ such that in $F_p(OG)$, the section $\tilde{f}_X$ of $F$ defined by $X$ restricted to $F/F_2$ is zero and the dimension of $F_p(X)$ is at least $2$ more than $E(X,p)$.\\
Also, define $\mc{Y}_1$ to be the locus of points $(p,X)$ such that $\tilde{f}_X$ restricted to $F/F_2$ is non-zero and the dimension of $F_p(X)$ is at least $1$ more than $E(X,p)$.\\
Define $\mc{U}=\mc{X}\setminus (\mc{Y}_0\cup \mc{Y}_1)$ to be the good locus. Explicitly, this is the locus of points $(p,X)$ where, either $F_p(X)$ has dimension $E(X,p)$ or $F_p(X)$ is exactly one dimension higher $E(X,p)$ and this defect is caused by the Jacobian of the defining equation of $X$ vanishing along $F_p(OG)$. The main result of this section is the following:

\begin{prop}
	\label{mainprop}
	For a general degree $d$ hypersurface $X$, the pair $(p,X)$ lies in the good locus $\mc{U}$ for all points $p$ in $X$.
\end{prop}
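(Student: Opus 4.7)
The plan is to show $\dim\mc{Y}_0,\dim\mc{Y}_1 < \dim\mb{P}(V_d)$, so that their images under $\pi_1$ are proper closed subsets of $\mb{P}(V_d)$ and a general $X$ avoids them. The key reduction uses transitivity of the special orthogonal group $\mathrm{SO}(V)$ on $OG$: the $\mathrm{SO}(V)$-action lifts naturally to $\mc{X}$ and preserves $\mc{Y}_0,\mc{Y}_1$, so the fibers of $\pi_2:\mc{Y}_i\to OG$ are equidimensional and $\dim\mc{Y}_i = \dim OG + \dim(\mc{Y}_i)_p$ for any chosen $p\in OG$. It thus suffices to verify $\dim(\mc{Y}_i)_p < \dim V_d - 1 - \dim OG$.

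Fix $p$ and consider the linear map $\mu:V_d\to H^0(F_p(OG),F)$ sending $f_X$ to the section $\tilde f_X$ constructed in Lemma \ref{zerosetofvectorbundle}. Since $F$ splits as $\bigoplus_{i=1}^d \mc{O}(i,i)$ on $F_p(OG)\cong\mb{P}^{k-1}\times H$ and $\mc{O}_{OG}(d)$ is globally generated, an orbit argument using the stabilizer of $p$ in $\mathrm{SO}(V)$ should show $\mu$ is surjective, giving
\[
\dim(\mc{Y}_i)_p = \dim\ker\mu + \dim B_i,
\]
where, setting $E=n-k-d-2$,
\[
B_0 = \{s\in H^0(F):\ s|_{F/F_2}=0,\ \dim Z(s)\ge E+2\},\quad B_1 = \{s\in H^0(F):\ s|_{F/F_2}\ne 0,\ \dim Z(s)\ge E+1\}.
\]
Writing $s=(g_1,\ldots,g_d)$ with $g_i\in H^0(\mc{O}(i,i))$, membership in $B_1$ forces $Z(s)\subset Z(g_1)$, a $(1,1)$-divisor in $\mb{P}^{k-1}\times H$, along which $g_2,\ldots,g_d$ fail to cut transversely; $B_0$ additionally imposes $g_1\equiv 0$, which already carves out a linear subspace of codimension $h^0(\mc{O}(1,1))=k(n-2k+1)$, and then demands an excess of two.

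To bound $\dim B_i$, I would introduce the incidence
\[
\mc{I}_i = \{(s,Z)\in B_i\times\mathrm{Hilb}(F_p(OG)):\ Z\subset Z(s),\ \dim Z = E+1+i\},
\]
and project to each factor. The fiber of the second projection over $Z$ is the linear subspace of sections vanishing on $Z$, of codimension $\sum_j h^0(\mc{I}_Z\otimes \mc{O}(j,j))$; by Castelnuovo--Mumford regularity on $\mb{P}^{k-1}\times H$ and positivity of the bundles $\mc{O}(j,j)$, this codimension is large enough to dominate the dimension of the corresponding Hilbert-scheme stratum. Summing over strata, feeding the result back into $\dim(\mc{Y}_i)_p = \dim\ker\mu + \dim B_i$, and invoking Assumption \ref{hyp1} ($n\ge 6k$, $d\le n-2k-2$) yields $\dim\mc{Y}_i<\dim V_d-1$.

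The main obstacle is the last step: controlling the dimension of positive-dimensional families of subvarieties $Z\subset\mb{P}^{k-1}\times H$ uniformly in their discrete invariants. The product structure generates many kinds of potential bad components (low-degree curves in one factor crossed with points in the other, fiber products of low-bidegree subvarieties, ruled components inside $Z(g_1)$, etc.), so I expect to argue inductively on the bidegree of $Z$, using Bertini-type slicing to reduce to lower bidegrees and repeatedly exploiting that a generic section of $\mc{O}(j,j)$ with $j\ge 1$ cuts any positive-dimensional component in expected codimension. The hypothesis $n\ge 6k$ is designed to leave enough slack in $\dim V_d - \dim OG$ to absorb each of these excess-dimension contributions.
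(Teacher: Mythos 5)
Your global reduction (fibering $\mc{Y}_i$ over $OG$ and bounding the fiber dimension over a single point $p$) is the same as the paper's Lemma \ref{equivalentdesc}, and your translation into bounding bad loci $B_0,B_1$ inside $H^0(F_p(OG),F)$ is the same starting point as Proposition \ref{codimprop}. But the heart of the argument is missing, exactly at the step you yourself flag as the main obstacle. Where you propose an incidence variety with the Hilbert scheme of $\mb{P}^{k-1}\times H$ and a comparison of $h^0(\mc{I}_Z\otimes\mc{O}(j,j))$ against the dimension of Hilbert-scheme strata, the paper never parameterizes the excess loci $Z$ independently at all. It exploits the filtration $F=F_1\supset\cdots\supset F_{d+1}=0$ and works down the tower of surjections $V_{d,p}\to A_{i+1}\to A_i$ with $A_i=\Gamma(F_p(OG),F/F_{i+1})$: at each stage the relevant subvariety is the zero locus $Z_h$ of a truncation $h$ that has \emph{already been chosen}, so its moduli is absorbed into the linear space one fibers over, and the failure at the next stage (the component $g_{i+1}$ being a zero divisor on $Z_h$, i.e.\ vanishing on an irreducible component) cuts out, inside the fiber of $\psi$, a union of linear subspaces whose codimension is the rank of the restriction map $\rho_{Z_h}:\Gamma(F_p(OG),\mc{O}(i+1,i+1))\to\Gamma(Z_h,\mc{O}(i+1,i+1)|_{Z_h})$. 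The uniform lower bound on that rank is supplied by Lemma \ref{restrictionrank} (reduction to a product of linear subvarieties of the same codimension), yielding $\binom{k-a_1+i}{i+1}\binom{n-2k+a_1}{i+1}\ge\binom{n-2k}{2}\ge\dim OG$ under Assumption \ref{hyp1}. No Castelnuovo--Mumford regularity, Hilbert-scheme stratification, or induction on bidegree is needed; and as written your version of those steps is not a proof, since families of subvarieties of $\mb{P}^{k-1}\times H$ of fixed dimension have unbounded discrete invariants, so ``summing over strata'' is neither finite nor uniformly controlled without precisely the kind of rank lemma the paper invokes.

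A secondary issue: your surjectivity of $\mu:V_d\to H^0(F_p(OG),F)$ is asserted via an orbit argument using the stabilizer of $p$, which does not by itself give surjectivity of a linear map; what is actually used in the paper is surjectivity of the restrictions $\Gamma(\mb{P}^{k-1}\times\mb{P}^{n-2k},\mc{O}(i,i))\to\Gamma(F_p(OG),\mc{O}(i,i))$ together with the explicit identification of the graded pieces of $F$ from Lemma \ref{zerosetofvectorbundle}. That part is fixable, but your identity $\dim(\mc{Y}_i)_p=\dim\ker\mu+\dim B_i$ rests on it, so it needs an honest argument rather than an appeal to homogeneity.
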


This is equivalent to showing that the projection map $\pi_1$ restricted to $\mc{Y}_0\cup \mc{Y}_1$ is not surjective onto $\mb{P}(V_d)$. For a point $p \in OG$, let $\mc{Y}_{0,p}=\mc{Y}_0\cap \pi_2^{-1}(p)$ and $\mc{Y}_{1,p}=\mc{Y}_1\cap \pi_2^{-1}(p)$.
\begin{lem}\label{equivalentdesc}
	To prove Proposition \ref{mainprop}, it is enough to show that $\codim_{\pi_2^{-1}(p)}\mc{Y}_{i,p} \ge \dim OG$ for $i=0,1$.
\end{lem}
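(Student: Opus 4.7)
The plan is a standard dimension count on the universal incidence variety $\mc{X}$, exploiting the homogeneity of $OG$ under the orthogonal group $O(Q)$ to reduce a global dimension bound on $\mc{Y}_0\cup \mc{Y}_1$ to a fiberwise bound over $OG$. First, note that $\mc{X}\subset OG\times \mb{P}(V_d)$ is cut out by the single linear condition $p\in X$, so $\pi_2:\mc{X}\rightarrow OG$ is a projective bundle whose fibers $\pi_2^{-1}(p)$ have dimension $\dim \mb{P}(V_d)-1$.

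Next I would invoke $O(Q)$-equivariance. The group $O(Q)$ acts transitively on $OG$ and lifts compatibly to $V_d\cong H^0(OG,\mc{O}(d))$ and to $\mc{X}$. The loci $\mc{Y}_0$ and $\mc{Y}_1$ are defined in terms of data intrinsic to the pair $(p, X)$ — namely, the filtered bundle $F$ on $F_p(OG)$ constructed in Lemma \ref{zerosetofvectorbundle}, the restricted section $\tilde{f}_X$, and the dimension of its zero locus $F_p(X)$ — all of which are $O(Q)$-equivariant. Hence $\mc{Y}_i$ is $O(Q)$-stable, so the fiber dimension $\dim \mc{Y}_{i,p}$ is independent of $p\in OG$, and either $\mc{Y}_i=\emptyset$ (in which case the desired conclusion is vacuous) or
\[
\dim \mc{Y}_i = \dim OG + \dim \mc{Y}_{i,p}.
\]

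To conclude, the assumed codimension bound $\codim_{\pi_2^{-1}(p)}\mc{Y}_{i,p}\ge \dim OG$ yields $\dim \mc{Y}_{i,p}\le \dim \mb{P}(V_d)-1-\dim OG$, which combined with the displayed equation gives $\dim \mc{Y}_i \le \dim \mb{P}(V_d)-1$. Since $\pi_1$ does not increase dimension, it follows that $\dim \pi_1(\mc{Y}_0\cup \mc{Y}_1) < \dim \mb{P}(V_d)$, so a general $X\in \mb{P}(V_d)$ lies outside this image; for such $X$, every pair $(p, X)$ with $p\in X$ lies in the good locus $\mc{U}$, which is precisely Proposition \ref{mainprop}. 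The only delicate step is the equivariance assertion, but it is immediate from the fact that the filtered bundle $F$ in Lemma \ref{zerosetofvectorbundle} is constructed solely from the tautological bundles $\mc{S}_{k-1}\subset \mc{S}_k\subset \mc{S}_{k+1}$ and $Q$-induced data, all of which are naturally $O(Q)$-equivariant.
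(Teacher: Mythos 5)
Your proof is correct and follows essentially the same route as the paper: bound $\dim \mc{Y}_{i,p}$ by $\dim \pi_2^{-1}(p)-\dim OG = \dim\mb{P}(V_d)-1-\dim OG$, add $\dim OG$ for the base to get $\dim\mc{Y}_i\le \dim\mb{P}(V_d)-1$, and conclude that $\pi_1|_{\mc{Y}_0\cup\mc{Y}_1}$ cannot dominate $\mb{P}(V_d)$. The $O(Q)$-equivariance you invoke is harmless but not needed, since the fiberwise codimension bound is assumed for every $p\in OG$, so the standard fiber-dimension estimate already gives the total bound.
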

\begin{proof}
	$$\codim_{\pi_2^{-1}(p)}\mc{Y}_{i,p} \ge \dim OG \implies \dim \mc{Y}_{i,p} \le \dim \pi_2^{-1}(p) - \dim OG$$\\
	Note that $\dim \pi_2^{-1}(p) = \dim \mb{P}(V_d) - 1$, thus:
	$$\dim \mc{Y}_{i} \le \dim \mb{P}(V_d) - 1 - \dim OG + \dim OG=\dim \mb{P}(V_d) - 1$$
	Thus $\mc{Y}_i$ can not surject onto $\mb{P}{V_d}$.

\end{proof}
The following lemma on the rank of restriction maps will be useful later on:
\begin{lem}\cite[Lemma ~2.5]{findley}\label{restrictionrank}
	Let $X \subset \proj{}^a\times \proj{}^b$ be a subvariety. Denote by $r_{X,i}$, the rank of the restriction map $H^0(\proj{}^a\times \proj{}^b,\mc{O}(i,i))\rightarrow H^0(X,\mc{O}(i,i)|_{X})$. For a pure variety $X$ of co-dimension $l$, there is a subvariety $W\subset \proj{}^a\times \proj{}^b$ such that:
	\begin{enumerate}
		\item $W$ has co-dimension $l$ in $\proj{}^a\times \proj{}^b$.
		\item  $W$ is a product of linear subvarieites of $\proj{}^a$ and $\proj{}^b$.
		\item $r_{X,i} \ge r_{W,i}$.
	\end{enumerate}
\end{lem}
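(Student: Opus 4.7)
My approach is to flat-degenerate $X$ to a subscheme whose components are products of coordinate linear subvarieties, and then compare $r$-values via semicontinuity. Concretely, pick bihomogeneous coordinates $(x_0,\dots,x_a;y_0,\dots,y_b)$ on $\proj{}^a\times\proj{}^b$ and a monomial order on $R=k[x_0,\dots,x_a,y_0,\dots,y_b]$ refining the bigrading. Let $I_X\subset R$ be the bihomogeneous ideal of $X$ and form the Gr\"obner family $\mc{Z}\to\mb{A}^1$ whose generic fiber is $X$ and whose special fiber is $X_0=V(\mathrm{in}(I_X))$. Preservation of the bigraded Hilbert function ensures $X_0$ has codimension $l$; and since $\mathrm{in}(I_X)$ is monomial, each of its minimal primes has the form $(x_{i_1},\dots,x_{i_p})+(y_{j_1},\dots,y_{j_q})$, so every irreducible component of $X_0$ is a product of coordinate linear subvarieties of $\proj{}^a$ and $\proj{}^b$.

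Next I would compare $r_{X,i}$ with $r_{X_0,i}$ by the standard semicontinuity of kernels in a flat family of restriction maps: the incidence locus $Y=\{(s,t)\in H^0(\proj{}^a\times\proj{}^b,\mc{O}(i,i))\times\mb{A}^1:s|_{X_t}=0\}$ is closed, so its fiber dimension is upper semicontinuous in $t$, which gives $r_{X,i}\ge r_{X_0,i}$. Since $X$ is pure of codimension $l$, I would then pick an irreducible component $W\subset X_0$ of codimension $l$; by the previous paragraph, $W$ is a product of linear subvarieties of the required codimension. The inclusion $W\hookrightarrow X_0$ makes the image of the restriction map to $W$ a quotient of the image for $X_0$, so $r_{X_0,i}\ge r_{W,i}$, and combining the two inequalities proves the lemma.

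The main obstacle is controlling the components of $X_0$: a Gr\"obner degeneration can introduce embedded primes of larger codimension in $\mathrm{in}(I_X)$, so one must verify that a minimal prime of codimension exactly $l$ remains. For pure $X$ this follows from preservation of the Hilbert polynomial, which forces at least one minimal prime of $\mathrm{in}(I_X)$ to have codimension $l$; a sufficiently generic choice of monomial order (equivalently, passing to a generic initial ideal) further ensures unmixedness at minimal codimension if needed.
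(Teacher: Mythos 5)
The paper itself gives no proof of this lemma -- it is quoted from \cite[Lemma 2.5]{findley} -- so there is no in-paper argument to compare yours against; I can only assess your degeneration argument on its own terms. The strategy is sound and essentially complete: Gr\"obner-degenerate $I_X$ to $\mathrm{in}(I_X)$, note that the components of $V(\mathrm{in}(I_X))$ are products of coordinate linear subspaces, that dimension is preserved, and that restriction ranks can only drop under the degeneration and under passing to a component. Two simplifications would make it airtight. First, the semicontinuity detour (the incidence locus $Y$ and fiber-dimension semicontinuity) is both the shakiest step as written -- one has to be careful that vanishing on the scheme-theoretic special fiber, which may be non-reduced, is a closed condition, and that the general fibers of the Gr\"obner family are projectively equivalent to $X$ so that the generic rank really is $r_{X,i}$ -- and also unnecessary. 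The kernel of the restriction map for $X$ is $(I_X)_{(i,i)}$; since standard monomials form a basis in each bidegree, $\dim(\mathrm{in}(I_X))_{(i,i)}=\dim(I_X)_{(i,i)}$; and every element of $\mathrm{in}(I_X)$ vanishes on $W\subset V(\mathrm{in}(I_X))$, so $(I_X)_{(i,i)}$ and $(I_W)_{(i,i)}$ have $\dim(I_W)_{(i,i)}\ge\dim(I_X)_{(i,i)}$, which is exactly $r_{X,i}\ge r_{W,i}$. This replaces both inequalities of your second paragraph in one stroke.

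Second, the worry in your last paragraph about embedded primes is moot: embedded primes do not produce irreducible components, and the preservation of the bigraded Hilbert polynomial already forces $\dim V(\mathrm{in}(I_X))=\dim X$, hence the existence of at least one \emph{relevant} minimal prime of codimension exactly $l$ (irrelevant coordinate primes, containing all $x$'s or all $y$'s, do not contribute to the Hilbert polynomial). So no generic initial ideal or unmixedness statement is needed, and purity of $X$ enters only in making ``codimension $l$'' unambiguous. With these points tightened, your argument is a correct proof of the lemma, and it yields the form of $W$ recorded in the remark following the statement, namely $W\cong\proj{}^{a-a_1}\times\proj{}^{b-b_1}$ with $a_1+b_1=l$.
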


\begin{rem}
	In the above lemma $W \cong \proj{}^{a-a_1}\times \proj{}^{b-b_1}$ where $a_1+b_1=l$. Thus, the restriction map is actually surjective and $r_{W,i}=\binom{a-a_1+i}{i}\binom{b-b_1+i}{i}=\binom{a-a_1+i}{i}\binom{b+a_1-l+i}{i}$
\end{rem}
Proposition \ref{mainprop} follows from Lemma \ref{equivalentdesc} and the following:
\begin{prop}
	\label{codimprop}
	For $p$ a point in $OG$, and $V_{d,p}$, the space of degree $d$ hypersurfaces through $p$, we have: $\codim_{\proj{V_{d,p}}}\mc{Y}_{i,p} \ge \dim OG$ for $i=0,1$.
\end{prop}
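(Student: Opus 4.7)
The plan is to bound the codimension of $\mc{Y}_{i,p}$ via an incidence correspondence whose main object is an irreducible component $Y \subset F_p(X)$ of codimension strictly smaller than the expected value $d$ in $F_p(OG)\cong\mathbb{P}^{k-1}\times H$. For $(p,X)\in\mc{Y}_{1,p}$ such a $Y$ has codimension $c$ with $1\le c\le d-1$; for $(p,X)\in\mc{Y}_{0,p}$ the $\mc{O}(1,1)$-component of $\tilde f_X$ must additionally vanish identically on $F_p(OG)$, and then one obtains a $Y$ of codimension $c\le d-2$. In the $\mc{Y}_{0,p}$ case the identical vanishing of the $\mc{O}(1,1)$-component is a linear condition on $V_{d,p}$ of codimension $h^0(F_p(OG),\mc{O}(1,1)) = k(n-2k+1)$, modulo a surjectivity claim $V_{d,p}\twoheadrightarrow H^0(F_p(OG),F/F_2)$ that I would verify by unwinding the pushforward construction in Lemma \ref{zerosetofvectorbundle} and comparing global sections on $OG$ with sections of $\mc{O}(1,1)$ on $F_p(OG)\subset \mathbb{P}^{k-1}\times \mathbb{P}^{n-2k}$.

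For fixed $Y$ of codimension $c$ in $F_p(OG)$, hence of codimension $c+1$ in $\mathbb{P}^{k-1}\times \mathbb{P}^{n-2k}$, the locus $\{X\in V_{d,p}:Y\subset F_p(X)\}$ is cut out by the linear conditions $(\tilde f_X)_j|_Y=0$ over each filtration component of degree $j$. Applying Lemma \ref{restrictionrank} componentwise produces a product of linear subvarieties $W\cong \mathbb{P}^{k-1-a_1}\times \mathbb{P}^{n-2k-b_1}$ with $a_1+b_1=c+1$ such that the number of independent conditions is at least $\sum_{j=\star}^d r_{W,j} = \sum_{j=\star}^d \binom{k-1-a_1+j}{j}\binom{n-2k-b_1+j}{j}$, where $\star=1$ (resp.\ $\star=2$) in the $\mc{Y}_1$- (resp.\ $\mc{Y}_0$-) case. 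To bound the family of $Y$'s, I would stratify by the type $(a_1,b_1)$ and reduce the count to the dimension of the moduli of the auxiliary products $W$, namely $a_1(k-a_1)+b_1(n-2k+1-b_1)$. Combining the two pieces,
$$\codim_{\mathbb{P}(V_{d,p})}\mc{Y}_{i,p}\ \ge\ \sum_{j=\star}^d \binom{k-1-a_1+j}{j}\binom{n-2k-b_1+j}{j}-a_1(k-a_1)-b_1(n-2k+1-b_1),$$
plus the additional free $k(n-2k+1)$ contribution in the $\mc{Y}_0$-case.

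The hard part will be the combinatorial verification: that for every $1\le c\le d-1$ (resp.\ $c\le d-2$) and every splitting $a_1+b_1=c+1$ the right-hand side above is at least $\dim OG=\frac{k(2n-3k+1)}{2}$. Assumption \ref{hyp1} ($n\ge 6k$, $d\le n-2k-2$) is meant to enter here; I expect the binding case to be $a_1=1$, where the dominant term is $\binom{n-2k-b_1+d}{d}$, growing polynomially of degree $d$ in $n$ while $\dim OG$ grows only linearly in $n$. A secondary technical point is the reduction from the full Hilbert scheme of bad $Y$'s to the family of subvarieties lying in the linear products $W$; I would handle this by a specialization argument that deforms $Y$ toward its bihomogeneous linear span on each factor, treating separately the degenerate strata where $Y$ meets the quadric $H$ non-transversely (here the extra codimension freed up by the quadratic relation defining $H$ should cover the deficit).
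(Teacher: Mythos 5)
There is a genuine gap in your parameter count for the incidence correspondence. You bound the dimension of the family of bad components $Y$ by the dimension $a_1(k-a_1)+b_1(n-2k+1-b_1)$ of the moduli of linear products $W\cong\proj{}^{k-1-a_1}\times\proj{}^{n-2k-b_1}$, but nothing reduces the family of $Y$'s to that of the $W$'s. Lemma \ref{restrictionrank} only compares ranks of restriction maps for a \emph{fixed} $Y$; it does not convert the incidence condition $Y\subset F_p(X)$ into a condition on $W$ (indeed $W$ need not lie in $F_p(X)$ at all), and your proposed specialization of $Y$ to its bihomogeneous span neither preserves the incidence condition nor gives any injection of parameter spaces. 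The actual candidate bad components are irreducible components of zero loci of sections of truncations of $F$, so the relevant family has dimension on the order of $\sum_{j\le c}h^0(F_p(OG),\mc{O}(j,j))$, which dwarfs the linear-moduli bound; with the correct subtraction your displayed inequality is no longer evidently true, and the whole combinatorial verification you defer would have to absorb this much larger term. As written, the key dimension count is unjustified.

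For comparison, the paper sidesteps parametrizing $Y$ altogether. It filters the bad locus by the stage $i$ at which the excess dimension appears ($W_{i+1}=W_i\cup W_i^b$) and fibers the space of hypersurfaces over the truncated section data $h\in A_i=\Gamma(F_p(OG),F/F_{i+1})$, using surjectivity of $V_{d,p}\to A_{i+1}\to A_i$. The candidate bad locus $Z_h$ is then \emph{determined by the base point} $h$, so no moduli of $Y$'s ever needs to be subtracted: in each fiber of $\psi$ (a torsor under $\Gamma(F_p(OG),\mc{O}(i+1,i+1))$), the sections vanishing on an irreducible component of $Z_h$ form a locus of codimension at least $\operatorname{rank}\rho_{Z_h}$, and Lemma \ref{restrictionrank} applied to that single component gives $\operatorname{rank}\rho_{Z_h}\ge\binom{k-a_1+i}{i+1}\binom{n-2k+a_1}{i+1}\ge\binom{n-2k}{2}\ge\dim OG$ under Assumption \ref{hyp1}. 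If you want to salvage your route, you would need to replace the moduli of $W$'s by an honest bounded parametrization of the possible components $Y$ (e.g.\ via the truncated sections cutting them out), at which point you are essentially rebuilding the paper's fibration argument.
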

\begin{proof}
	Replace $\mc{Y}_{i,p}$ with the corresponding locus $\mc{Y}^a_{i,p}$  in $V_{d,p}$. It is enough to show $\codim_{V_{d,p}}\mc{Y}^a_{i,p} \ge \dim OG$.\\
	Recall that $F_p(OG)$ is the space of lines through a point $p$ in $OG$ and we can identify: $$F_p(OG) \cong \proj{}^{k-1}\times H \subset \proj{}^{k-1}\times \proj{}^{n-2k}$$
	For a degree $d$ hypersurface $X$ containing $p$, $F_p(X)$ is given by the zero set of the corresponding section $f_X$ of $F$. Denote by $Z_{i,X}$ the zero set of the restriction of $f_X$ to $F/F_{i+1}$. Let $W_{i} \subset V_{d,p}$ denote the locus of hypersurfaces $X$ such that $Z_{i,X}$ has co-dimension less than $i$ in $F_p(OG)$. Denote by $W_i^{b} \subset (W_i)^c$ the locus where $Z_{i,X}$ has co-dimension $i$ and $f_X$ restricted to $F_{i+1}/F_{i+2}|_{Z_{i,X}}\cong \mc{O}(i+1,i+1)|_{Z_{i,X}}$ is a zero divisor. Therefore, we have:
	$$W_{i+1}=W_i \cup W_i^{b}$$

	And: $\mc{Y}^a_{1,p} = (\bigcup\limits_{i=2}^d W_i) \cap W_1^c = (\bigcup\limits_{i=1}^{d-1} W_i^b) \cap W_1^c$. Therefore, it is enough to show that $\codim_{W_i^c}W_i^b \ge \dim(OG)$ for $1\le i\le d- 1$. Consider the vector spaces $A_i=\Gamma(F_p(OG),F/F_{i+1})$. The following restriction maps are surjective:

	$$V_{d,p} \xrightarrow[]{\phi} A_{i+1} \xrightarrow[]{\psi} A_i$$

	It is enough to show that the fibers of the restricted map $\psi|_{\phi(W_i^b)}:\phi(W_i^b) \rightarrow \psi(\phi(W_i^c )) $ has codimension at least $\dim OG$ in fibers of $\psi$. For $h \in \psi(\phi(W_i^c ))$ let $Z_h$ denote the zero locus of $h$ inside $F_p(OG)$. There is a map $\psi^{-1}(h)\rightarrow \Gamma(Z_h,\mc{O}(i+1,i+1)|_{Z_{h}})$ that can be identified with the restriction map $\rho_{Z_h}:\Gamma(F_p(OG),\mc{O}(i+1,i+1))\rightarrow \Gamma(Z_h,\mc{O}(i+1,i+1)|_{Z_{h}})$. We can identify $\psi|_{\phi(W_i^b)}^{-1}(h)$ with the locus in $\psi^{-1}(h)$  whose restriction to $Z_h$ vanishes on an irreducible component of $Z_h$. By restricting to the irreducible component, we may assume $Z_h$ is irreducible. Thus the co-dimension of $\psi|_{\phi(W_i^b)}^{-1}(h)$ in $\psi^{-1}(h)$ is at least the rank of $\rho_{Z_h}$. Note that $Z_h$ has co-dimension $i+1$ in $\proj{}^{k-1}\times \proj{}^{n-2k}$. Furthermore, the restriction maps $\Gamma(\proj{}^{k-1}\times \proj{}^{n-2k},\mc{O}(i+1,i+1))\rightarrow \Gamma(F_p(OG),\mc{O}(i+1,i+1))$ are surjective for all $i$. Thus from Lemma \ref{restrictionrank}, rank of $\rho_{Z_h}$ is at least as large as $\binom{k-a_1+i}{i+1}\binom{n-2k+a_1}{i+1}$ for some $0\le a_1 \le i+1$. Under Assumption \ref{hyp1} and since $i+1 \ge 2$, $\binom{k-a_1+i}{i+1}\binom{n-2k+a_1}{i+1} \ge \binom{n-2k}{2} \ge \frac{k(2n-3k+1)}{2}=\dim OG$. Thus we conclude  $\codim_{\proj{V_{d,p}}}\mc{Y}_{1,p} \ge \dim OG$.\\

	The proof for $\mc{Y}_{0,p}$ is similar. Note that $W_1 \subset V_{d,p}$ is the locus of hypersurfaces $X$ such that $f_X$ restricted to $F/F_2\cong \mc{O}(1,1)$ vanishes. Let $Z_{i,X}$ be the same as above and consider, for $i\ge 2$, $W^{\prime}_i \subset W_1$ the locus of hypersurfaces $X$ such that $f_X$ restricted to $F/F_{i+1}$ has codimension less than $i-1$.  Define $W_i^{b\prime}$ similarly as above to be the locus where $Z_{i,X}$ has co-dimension $i-1$ and $f_X$ restricted to $\mc{O}(i+1,i+1)|_{Z_{i,X}}$ is a zero-divisor. Hence: $\mc{Y}_{0,p}^a=\bigcup\limits_{i=1}^{d-1} W^{b\prime}_i$ and it is enough to show that $\codim_{(W_i^{\prime})^c} W^{b\prime}_i \ge \dim OG$. Let $A_i^{\prime}$ denote $\Gamma(F_p(OG),F_2/F_{i+1})$. Then the following restriction maps are surjective:
	$$W_1 \xrightarrow[]{\phi^{\prime}} A^{\prime}_{i+1} \xrightarrow[]{\psi^{\prime}} A^{\prime}_i$$
	By a similar argument as above, it is enough to check that for $h\in \psi^{\prime}(\phi^{\prime}((W_i^{\prime})^c ))$, 
	$$\codim_{\psi^{\prime-1}(h)} \psi^{\prime}|_{\phi^{\prime}(W_i^{b\prime})}^{-1}(h) \ge \dim OG$$
	Let $Z_h$ be the zero-locus of $h$, we may assume $Z_h$ is irreducible. Thus the above co-dimension is at least the rank of the restriction map $\rho_{Z_h}:\Gamma(F_p(OG),\mc{O}(i+1,i+1)) \rightarrow \Gamma(Z_h, \mc{O}(i+1,i+1))$. In this case, $Z_h$ has co-dimension $i$ in $\proj{}^{k-1}\times \proj{}^{n-2k}$. Using Lemma \ref{restrictionrank}, rank of $\rho_{Z_h}$ is at least $\binom{k-a_1+i}{i+1}\binom{n-2k+a_1+1}{i+1}$ for some $0\le a_1 \le i$. As before, under Assumption \ref{hyp1}, this is bigger than $\dim OG$. Hence we have $\codim_{\proj{V_{d,p}}}\mc{Y}_{0,p} \ge \dim OG$.
\end{proof}


\section{Irreducibility of the Space of Lines}\label{sectionlinesareirred}
\begin{thm}\label{irredlines}
	Let $X$ be a general hypersurface in $OG$ satisfying Assumption \ref{hyp1}. Then $F_{0,1}(X)$ is irreducible of the expected dimension $E(X,p)+\dim X$. A general fiber of $ev:F_{0,1}(X)\rightarrow X$ is irreducible and non-singular of the expected dimension $E(X,p)$.
\end{thm}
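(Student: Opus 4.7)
The plan is to combine Proposition \ref{mainprop} with a Bertini-type analysis of the generic fiber and a dimension estimate for the ``bad locus'' $B \subset X$ where $f_X|_{F/F_2}$ vanishes. By Proposition \ref{mainprop}, for a general hypersurface $X$, every pair $(p, X)$ lies in the good locus $\mc{U}$, so every fiber $F_p(X)$ has dimension either $E(X,p) = n-k-d-2$ or $E(X,p)+1$; the excess dimension occurs exactly on $B$, and for $p \in B$ the dimension is forced to equal $E(X,p)+1$ since $F_p(X)$ is then cut out in $F_p(OG)$ by only $d-1$ of the sections in the splitting of $F$.

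For the generic fiber, Lemma \ref{zerosetofvectorbundle} together with the splitting remark realizes $F_p(X) \subset F_p(OG) \cong \mathbb{P}^{k-1} \times H$ as the zero scheme of a section of $F \cong \bigoplus_{i=1}^d \mc{O}(i,i)$. The restriction map $V_{d,p} \to \Gamma(F_p(OG), F)$ is surjective, as is established iteratively in the proof of Proposition \ref{codimprop}, so for general $X$ the induced section is generic. Each $\mc{O}(i,i)$ is very ample on the smooth irreducible projective variety $\mathbb{P}^{k-1} \times H$, and Assumption \ref{hyp1} gives $E(X,p) \ge k \ge 1$. Iterated application of Bertini's smoothness and irreducibility theorems then shows the generic fiber $F_p(X)$ is smooth and irreducible of the expected dimension $E(X,p)$.

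Next I would bound $\dim B$. For a fixed $p \in OG$ corresponding to the isotropic subspace $W \subset V$, the condition $f_X|_{F/F_2}(p) = 0$ is a linear condition on $V_{d,p}$, controlled by the map $V_{d,p} \to \Gamma(F_p(OG), \mc{O}(1,1)) \cong W \otimes (W^\perp/W)^*$. This map factors as the differential $V_{d,p} \to T_p^* OG$ followed by restriction to the tangent directions of lines through $p$, which span the subspace $\mathrm{Hom}(W, W^\perp/W) \subset T_p OG$. A short-exact-sequence computation identifies the cokernel $T_p OG / \mathrm{Hom}(W, W^\perp/W) \cong \wedge^2 W^*$, of dimension $k(k-1)/2$, and the composition is surjective since differentiation is. A standard incidence-variety dimension count then yields, for general $X$, $\dim B \le \dim X - k(n-2k+1) = (k(k-1)-2)/2 \le k(k-1)/2$.

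Finally, for the global irreducibility: Lemma \ref{zerosetofvectorbundle} applied globally presents $F_{0,1}(X)$ as the zero scheme of a section of a rank-$(d+1)$ vector bundle on $F_{0,1}(OG)$, so every irreducible component has dimension at least $\dim F_{0,1}(OG) - (d+1) = \dim X + E(X,p)$ by Krull's height theorem. On the other hand, $ev^{-1}(B)$ has dimension at most $\dim B + E(X,p) + 1 \le k(k-1)/2 + E(X,p) + 1$, which under the hypothesis $n \ge 6k$ is strictly smaller than $\dim X + E(X,p)$; hence no irreducible component of $F_{0,1}(X)$ is contained in $ev^{-1}(B)$, and every component must dominate $X$. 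The irreducibility of the generic fiber then forces all such components to coincide, so $F_{0,1}(X)$ is irreducible of the expected dimension. The main obstacle is the bound on $\dim B$, which hinges on the identification of the image of $V_{d,p}$ in $\Gamma(F_p(OG), \mc{O}(1,1))$ via the tangent-space decomposition described above.
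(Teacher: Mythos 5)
Your argument is correct and follows the paper's overall skeleton --- every pair $(p,X)$ in the good locus by Proposition \ref{mainprop}, a bound on the jump locus via an incidence count over $\proj{V_d}$, the lower bound $\dim X+E(X,p)$ for every component from the global zero-scheme presentation of Lemma \ref{zerosetofvectorbundle}, and the dimension comparison that rules out non-dominating components --- but you handle the general fiber by a genuinely different route. The paper gets nonsingularity of $ev^{-1}(p)$ for general $p$ from freeness of lines through a general point (\cite[II.3.11]{kol}) and connectedness from the fact that $F_p(X)$ is the zero locus of a section of an ample vector bundle of positive expected dimension (\cite[Theorem 7.2.1]{lazii}); you instead use surjectivity of the restriction $V_{d,p}\rightarrow \Gamma(F_p(OG),F)$ (asserted in the proof of Proposition \ref{codimprop}) together with iterated Bertini on $\proj{}^{k-1}\times H$. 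Your route needs the intermediate dimensions to stay at least $2$, which Assumption \ref{hyp1} supplies via $E(X,p)\ge k$, and it needs the small extra step of passing from ``general $X$ through a fixed $p$'' to ``general $p$ in a general $X$'', which is the same incidence count you already use for $B$; the paper's route gets connectedness for all fibers at once and sidesteps both points. Conversely, your proof is more explicit than the paper's in one place: you justify that vanishing of the $\mc{O}(1,1)$-part imposes $k(n-2k+1)$ independent conditions by factoring $V_{d,p}\rightarrow \Gamma(F_p(OG),\mc{O}(1,1))\cong W\otimes (W^{\perp}/W)^*$ through $d_pf_X\in T_p^*OG$ and the inclusion $\mathrm{Hom}(W,W^{\perp}/W)\subset T_pOG$ with cokernel $\wedge^2W^*$, whereas Lemma \ref{jumpdimension} leaves this independence implicit; your bound $\frac{k(k-1)}{2}-1$ on $\dim B$ agrees with the paper's, and the final contradiction $\dim N\le \frac{k(k-1)}{2}+E(X,p)<\dim X+E(X,p)$ is the same.
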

First we need the following lemma:
\begin{lem}\label{jumpdimension}
	For a general degree $d$ hypersurface $X$ in OG satisfying Assumption \ref{hyp1}, the locus $Z\subset X$ where the map $ev:F_{0,1}(X)\rightarrow X$ has fiber dimension higher than $E(X,p)$, has dimension at most $\frac{k(k-1)}{2} - 1$.
\end{lem}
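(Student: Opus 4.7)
My plan is to reduce to bounding a universal incidence locus cut out by a single linear condition on the hypersurface, and then do a direct dimension count. By Proposition~\ref{mainprop}, for a general degree $d$ hypersurface $X$ the pair $(p,X)$ lies in $\mc{U}$ for every $p\in X$, so it is enough to work inside the good locus. Reading off the explicit description of $\mc{U}$ (complement of $\mc{Y}_0\cup\mc{Y}_1$), inside $\mc{U}$ the fiber $F_p(X)$ has dimension strictly greater than $E(X,p)$ exactly when the section $\tilde f_X|_{F/F_2}\in H^0(F_p(OG),\mc{O}(1,1))$ vanishes identically, and in that case the dimension is precisely $E(X,p)+1$ (corresponding to the one missing linear equation). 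Hence
\[
Z \;\subseteq\; \bigl\{\,p\in X : \tilde f_X|_{F/F_2}=0\,\bigr\},
\]
and it suffices to bound the right-hand side for general $X$.

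For this I would form the universal incidence locus $\mc{Z}\subset \mc{X}\cap\mc{U}$ of pairs $(p,X)$ with $\tilde f_X|_{F/F_2}=0$, and project to $OG$ via $\pi_2$. The fiber $\mc{Z}_p\subset\pi_2^{-1}(p)=\mb{P}(V_{d,p})$ is cut out by a linear condition whose codimension equals the rank of the restriction map
\[
\rho_p\colon V_{d,p}\longrightarrow H^0\bigl(F_p(OG),\mc{O}(1,1)\bigr),
\]
which is surjective by the same argument that appears at the start of the proof of Proposition~\ref{codimprop}. Computing the target is routine: since $H\subset\proj{}^{n-2k}$ is a quadric, twisting its ideal sequence gives $h^0(H,\mc{O}(1))=n-2k+1$, and Künneth on $F_p(OG)\cong\proj{}^{k-1}\times H$ yields
\[
\codim_{\pi_2^{-1}(p)}\mc{Z}_p \;=\; h^0\bigl(F_p(OG),\mc{O}(1,1)\bigr) \;=\; k(n-2k+1).
\]

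To conclude, since $OG$ is $SO(V)$-homogeneous and the whole construction is equivariant, all fibers of $\mc{Z}\to OG$ are equidimensional, so $\dim\mc{Z}=\dim OG+\dim\mb{P}(V_d)-1-k(n-2k+1)$. Applying the fiber-dimension theorem to $\mc{Z}\to\mb{P}(V_d)$ (if this projection is not dominant then $Z=\emptyset$ for general $X$ and we are done), we get
\[
\dim Z \;\le\; \dim\mc{Z}-\dim\mb{P}(V_d) \;=\; \dim OG - 1 - k(n-2k+1) \;=\; \tfrac{k(k-1)}{2}-1,
\]
after plugging in $\dim OG=\tfrac{k(2n-3k+1)}{2}$ and simplifying. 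The only step with genuine content is the identification of $Z$ with the vanishing locus of $\tilde f_X|_{F/F_2}$ inside $\mc{U}$: one must verify that inside the good locus no other source of dimensional excess can occur and that the jump is exactly one whenever the $\mc{O}(1,1)$-piece vanishes. Both follow directly from unwrapping the definitions of $\mc{Y}_0,\mc{Y}_1$ and $\mc{U}$; the rest is bookkeeping.
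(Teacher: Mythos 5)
Your proposal is correct and follows essentially the same route as the paper: reduce via Proposition \ref{mainprop} to the locus where the linear piece $\tilde f_X|_{F/F_2}$ (equivalently, the Jacobian of $X$ along $F_p(OG)$) vanishes, then bound the universal incidence locus using homogeneity of $OG$, surjectivity of the restriction to $H^0(F_p(OG),\mc{O}(1,1))\cong\mb{C}^{k(n-2k+1)}$, and the fiber-dimension theorem for the projection to $\proj{V_d}$. The only differences are expository (you spell out the $h^0$ computation and the surjectivity that the paper asserts more briefly).
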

\begin{proof}
	From Proposition \ref{mainprop}, we know that for a general $X$, the fiber dimension of $ev$ is at most $E(X,p)+1$ and if $F_p(X)$ is exactly $(E(X,p)+1)$-dimensional, the Jacobian of the defining equation of $X$ vanishes along $F_p(OG)$. Let $\mc{X}$ denote the universal family of degree $d$ hypersurfaces in OG with the projection maps denoted $\pi_1: \mc{X} \rightarrow \proj{V_d}$ and $\pi_2:\mc{X} \rightarrow OG$. Let $\mc{Z}\subset \mc{X}$ be the locus of pairs $(p,X)$ such that the Jacobian of $X$ vanishes along $F_p(OG)$. Let $\phi:\mc{Z} \rightarrow OG$ denote the restriction of $\pi_2$. Using the action of the orthogonal group, it is easy to see that the fibers of $\phi$ are all isomorphic.\\
	Recall that for a fixed point $p$ in $OG$, $F_p(OG)\cong \proj{}^{k-1}\times H$ where $H$ is a non-singular quadric hypersurface in $\proj{}^{n-2k}$. Since the Jacobian of a hypersurface gives a section of the line bundle $\mc{O}(1,1)$ on $F_p(OG)$, $\phi^{-1}(p)$ has dimension equal to $\dim (\proj{V_d}) - 1 - k(n-2k+1)$. Thus $\dim \mc{Z} = \dim (\proj{V_d}) - 1 - k(n-2k+1)+\dim OG = \dim (\proj{V_d}) - 1 + \frac{k(k-1)}{2}$ and, for a general degree $d$ hypersurface $X$, the locus $Z$ has dimension at most $\frac{k(k-1)}{2} - 1$.
\end{proof}
\begin{proof}[Proof of Theorem \ref{irredlines}]
	Let $ev: F_{0,1}(X)\rightarrow X$ denote the evaluation map sending a pointed line to the corresponding point in $X$. From \cite[II.3.11]{kol}, for a general $p\in X$, $ev^{-1}(p)$ is non-singular. From Lemma \ref{zerosetofvectorbundle}, $ev^{-1}(p)$ is the zero locus of a section of an ample vector bundle on $F_{p}(OG)$ with positive expected dimension. Therefore $ev^{-1}(p)$ is connected (see \cite[Theorem ~7.2.1]{lazii}) and hence irreducible for a general $p\in X$. Let $M\subset F_{0,1}(X)$ be an irreducible component dominating $X$. Suppose $N\subset F_{0,1}(X)$ is another irreducible component. From the above discussion, $ev|_N:N\rightarrow X$ is not dominant. Since every irreducible component of $F_{0,1}(X)$ has dimension at least $\dim(X) + E(X,p)$, the fibers of $ev|_N$ must have dimension at least $E(X,p)+1$. From Proposition \ref{mainprop}, Lemma \ref{jumpdimension} and generality of $X$, the fibers of $ev|_N$ has dimension exactly $E(X,p)+1$ and the image $ev(N)$ has dimension at most $\frac{k(k-1)}{2}-1$. Thus $N$ has dimension at most $\frac{k(k-1)}{2}+E(X,p)$ which is strictly less than $\dim(X) + E(X,p)$. Thus we have a contradiction and $F_{0,1}(X)$ is irreducible. Since the fiber dimension $ev^{-1}(p)$ for a general $p\in X$ is $E(X,p)$, $F_{0,1}(X)$ has dimension equal to the expected dimension $dim(X)+E(X,p)$.
\end{proof}


\section{Moduli Space of Stable Maps}\label{sectiondefnstablemaps}
Let $X$ be a smooth projective variety. A stable map with $n$ marked points to $X$ is the datum: $(C,p_1,\ldots,p_n,h)$ where:
\begin{enumerate}
	\item $C$ is an at-worst-nodal curve.
	\item $p_1,\ldots,p_n \in C$ are $n$ points lying in the non-singular locus of $C$.
	\item $h:C\rightarrow X$ a morphism such that the group $Aut(h,p_1,\ldots,p_n)$ of automorphisms of $h$ fixing the points $p_1,\ldots,p_n$ is finite.
	\item[$(3)^{\prime}$] The marked points and the points over nodes in the normalization of $C$ are called special points. For every irreducible component $C^{\prime}$ of $C$ contracted by $h$: If $C^{\prime}$ has genus $0$, there are at least $3$ special points on it and if $C^{\prime}$ has genus $1$, it has at least $1$ special point on it.
\end{enumerate}
The last two conditions are equivalent.\\
Similarly, a family of stable maps with marked points to $X$ is the datum $(\pi:\mc{C}\rightarrow S,\sigma_1,\ldots,\sigma_n,h)$ of a family of at-worst-nodal curves $\pi:\mc{C}\rightarrow S$; $n$-sections $\sigma_1,\ldots,\sigma_n$ of $\pi$; $h:\mc{C}\rightarrow X$ a morphism such that for each $s\in S$, $(C_s,\sigma_1(s),\ldots,\sigma_n(s),h|_{C_s})$ is a stable map with marked points to $X$.\\
Let $\beta \in H_2(X,\mb{Z})$ be a curve class. A stable map $(C,p_1,\ldots,p_n,h)$ has class $\beta$ if $h_*[C]=\beta\in H_2(X,\mb{Z})$. 
\begin{defn}
	Define by $\mbar_{g,n}(X,\beta)$, the Kontsevich moduli space of stable maps with $n$-marked points from genus $g$ curves to $X$ with class $\beta$.
\end{defn}
\begin{thm}\cite[~1.3.1]{kontsevich}
	$\mbar_{g,n}(X,\beta)$ is a proper Deligne-Mumford stack with projective coarse moduli space.
\end{thm}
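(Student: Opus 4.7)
The plan is to follow Kontsevich's original strategy, which proceeds in four logical stages: setting up the moduli functor as a stack, bounding the parameter space, verifying properness, and extracting a projective coarse moduli space. I would first check descent: a stable map over a base $S$ pulls back along arbitrary morphisms $T \to S$, and isomorphisms of families form a sheaf in the fppf topology, so the moduli problem defines a stack. The stability condition in $(3')$ ensures that $\mathrm{Aut}(h, p_1, \ldots, p_n)$ is finite for every stable map, which will eventually give Deligne--Mumford.

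Next I would establish boundedness. Fix an ample line bundle $\mathcal{O}_X(1)$ on $X$ and let $d = \beta \cdot c_1(\mathcal{O}_X(1))$. For any stable map $(C, p_i, h)$ of class $\beta$, the image has bounded degree $d$ in a fixed projective embedding of $X$, and $C$ has bounded arithmetic genus $g$. I would rigidify by adjoining auxiliary points so that $\omega_C^{\otimes 3}(p_1 + \cdots + p_n) \otimes h^*\mathcal{O}_X(3)$ becomes very ample on $C$ with uniformly bounded Hilbert polynomial $P$, and then realize the moduli problem as a locally closed substack of the relative Hilbert scheme of $X \times \mathbb{P}^N$ with Hilbert polynomial $P$, modulo the automorphisms of the projective embedding. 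This exhibits $\mbar_{g,n}(X,\beta)$ as a global quotient of a quasi-projective scheme by a reductive group.

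The main obstacle is properness, which requires separatedness and the valuative criterion. For both, the key input is stable reduction of maps: given a DVR $R$ with fraction field $K$ and a stable map over $\mathrm{Spec}(K)$, I would first apply semistable reduction to extend the source curve to a family over $R$ (possibly after finite base change), then extend the map using the fact that $X$ is projective and indeterminacy can be resolved by blowing up the central fiber, and finally contract unstable rational tails and bridges via the relative dualizing sheaf to restore condition $(3')$. Uniqueness of this contraction (after base change) gives separatedness; existence gives the valuative criterion. Verifying that contracting unstable components really produces a well-defined limit is the technical heart of the argument. The Deligne--Mumford property then follows: the diagonal is unramified because $\mathrm{Aut}(h, p_1, \ldots, p_n)$ is finite (hence zero-dimensional) for stable maps, and it is of finite type by boundedness, so there exist étale covers by schemes.

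Finally, to produce a projective coarse moduli space I would invoke the Keel--Mori theorem, which gives a coarse moduli space for a separated Deligne--Mumford stack of finite type. To see projectivity, I would descend an ample line bundle from the Hilbert scheme presentation (using a linearized ample bundle on the quot/Hilbert scheme cut out by Plücker coordinates), or alternatively form $\lambda_k := \det R\pi_*(\omega_{\mathcal{C}/\mbar}^{\otimes k}(\sum \sigma_i) \otimes h^*\mathcal{O}_X(k))^{\otimes m}$ on the universal curve $\pi : \mathcal{C} \to \mbar_{g,n}(X,\beta)$ for large $k, m$ and verify positivity by Kollár's or Viehweg's ampleness criteria for moduli of polarized varieties. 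Either route identifies a canonical ample bundle on the coarse space, completing the proof.
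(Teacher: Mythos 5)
This statement enters the paper purely as a citation (to \cite{kontsevich}); the text contains no proof of its own, so the only meaningful comparison is with the standard construction in the cited literature. Your outline reproduces exactly that standard argument (as carried out by Kontsevich's program and in detail by Fulton--Pandharipande and Behrend--Manin): sheaf/stack structure from descent, boundedness by rigidifying with a very ample twist of $\omega_C(\sum p_i)\otimes h^*\mathcal{O}_X(3)$ and embedding into a Hilbert scheme modulo a reductive group, properness via semistable reduction followed by contraction of unstable components, the Deligne--Mumford property from finiteness of automorphisms (harmless here since the paper works over $\mathbb{C}$), and projectivity of the coarse space via a descended ample determinant bundle with Koll\'ar-type positivity; so your approach coincides with the source's, with the understanding that the genuinely technical steps (uniqueness of the stable limit giving separatedness, and the ampleness verification) are named rather than executed, which is acceptable for a proof sketch of a quoted background theorem.
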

We will be only interested in the $g=0$ case. Inside $\mbar_{0,n}(X,\beta)$, the locus where the curves are reducible has a stratification in terms of the dual graphs. The following discussion is from \cite{behrendmanin}. In this paper we will only use the case needed in Section \ref{chains}.
\begin{defn}
	A graph $\tau$ is the data of a finite set of vertices $W$ along with a finite set of flags $F$ with map $\delta:F\rightarrow W$ and an involution $:F\rightarrow F$. The set of tails, $T$ is the set of fixed points of $j$. The set of edges is the set $(F\setminus T)/ \{j(f) \sim f\}$.
\end{defn}
\begin{defn}
	For an integer $A\ge 0$, a $A$-graph is a graph $\tau$ as above along with a map $\beta:W\rightarrow \mb{Z}_{\ge 0}$ such that $\sum_{w\in W}\beta(w)=A$.
\end{defn}
\begin{defns}
	\begin{enumerate}
		\item For a connected, at-worst-nodal curve $C$ with $n$ marked points, the associated dual graph is defined as:
			\begin{enumerate}
				\item The vertices $W$ are the set of irreducible components of $C$.
				\item The flags $F$ are inverse images of the nodes and the marked points in the normalization $\tilde{C}\rightarrow C$.
				\item The map $\delta:F\rightarrow W$ maps each point in the normalization to the irreducible component it belongs to.
				\item If $p$ and $q$ are the inverse images of a node in the normalization then $j$ maps $p$ to $q$. The marked points are mapped to themselves (hence correspond to tails).
			\end{enumerate}
		\item Let $X$ be a projective variety with $Pic(X)\cong \mb{Z}$. $f:C\rightarrow X$ is a stable map, we may associate an $A$-graph to $f$ by taking $\tau$ as above and defining $\beta(w)=\deg_{C_w} f^*L$ where $C_w$ is the irreducible component of $C$ corresponding to $w$ and $L$ is the ample generator of $Pic(X)$. Here $A=\deg_C f^*L$.
	\end{enumerate}
\end{defns}
\begin{rem}
	When $g=0$, the graphs obtained as above are always acyclic.
\end{rem}
There are natural maps between $A$-graphs that correspond to specialization of curves to curves with more irreducible components. Here is an informal definition. For more details see \cite[Definition ~1.3]{behrendmanin}.
\begin{defn}
	Given two $A$-graphs $(\tau,\beta)$ and $(\tau^{\prime},\beta^{\prime})$, a contraction $c:\tau^{\prime}\rightarrow \tau$ is a surjective map $c_W:W(\tau^{\prime})\rightarrow W(\tau)$ between the vertices that maps two adjacent vertices in $\tau^{\prime}$ to adjacent vertices in $\tau$ (every vertex is considered to be adjacent to itself). The tails in $\tau^{\prime}$ are mapped bijectively to the tails in $\tau$. Further, we must have $\beta(w)=\sum_{w^{\prime}\in c_W^{-1}(w)}\beta^{\prime}(w^{\prime})$.
\end{defn}
\begin{defn}
	For an acyclic $A$-graph $\tau$, a projective variety $X$ with $Pic(X)=\mb{Z}\langle L\rangle$, define $\mbar(X,\tau)$ to be the moduli space parameterizing stable maps from genus $0$ curves to $X$ with dual $A$-graph $\tau^{\prime}$ such that there is a contraction $c:\tau^{\prime}\rightarrow \tau$.
\end{defn}
\begin{thm}\cite[Theorem ~3.14]{behrendmanin}
	$\mbar(X,\tau)$ is a proper Deligne-Mumford stack with projective coarse moduli space.
\end{thm}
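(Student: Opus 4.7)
The plan is to realize $\mbar(X,\tau)$ as a closed substack of a standard Kontsevich moduli space and inherit both the proper Deligne--Mumford property and projectivity of the coarse moduli from it. Let $T$ denote the set of tails of $\tau$ and set $A = \sum_{w\in W}\beta(w)$; every stable map parameterized by $\mbar(X,\tau)$ is then a genus $0$ stable map with tails indexed by $T$ and total class $A$, giving a tautological morphism $\iota:\mbar(X,\tau)\to \mbar_{0,T}(X,A)$. The previous cited Kontsevich theorem guarantees the target is a proper DM stack with projective coarse moduli, so the problem reduces to showing that $\iota$ is a closed immersion.

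To see this, I would stratify $\mbar_{0,T}(X,A)$ by dual graph: for each $A$-graph $\sigma$ with tails indexed by $T$, let $M_\sigma^\circ$ denote the locally closed substack of stable maps whose dual graph is exactly $\sigma$. Then as sets
$$\mbar(X,\tau) \;=\; \bigsqcup_{c:\sigma \to \tau} M_\sigma^\circ,$$
the union running over $A$-graphs $\sigma$ admitting a contraction $c:\sigma\to\tau$. The standard deformation theory of nodal curves says that a stable map with dual graph $\sigma_2$ can specialize only to stable maps whose dual graph $\sigma_1$ admits a contraction $\sigma_1\to\sigma_2$: specialization refines the graph and never coarsens it. Composition of contractions then shows the union above is closed under specialization inside $\mbar_{0,T}(X,A)$, which gives the required closed immersion.

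An alternative route, which I would actually carry out in detail, is to build $\mbar(X,\tau)$ directly as a fiber product
$$\mbar(X,\tau)\;\cong\;\Bigl(\prod_{w\in W}\mbar_{0,F(w)}(X,\beta(w))\Bigr) \times_{X^{E}\times X^E} X^{E},$$
where for each edge $e=\{f,f'\}\in E$ one identifies the two evaluation maps $ev_f$ and $ev_{f'}$ via the diagonal $X^E\hookrightarrow X^E\times X^E$. This is a fiber product of proper DM stacks along morphisms to the smooth (in particular separated) scheme $X^E$, so both properness and the DM property transfer, and the coarse moduli of such a fiber product over a projective base is again projective. The main obstacle in either approach is the combinatorial bookkeeping. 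In the first, one must verify that the closure of $M_\tau^\circ$ is exactly the indicated union of strata; the subtle direction requires every ``refinement'' $\sigma$ of $\tau$ to be realized by a geometric smoothing of nodes respecting the prescribed splitting of $\beta$, which is classical in genus $0$ but must be done carefully to preserve stability. In the second, one has to check that the fiber product really represents the functor described in the definition and that automorphisms at reducible curves are accounted for correctly, which amounts to comparing the gluing construction with the dual-graph description of limits.
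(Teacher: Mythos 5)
The paper does not prove this statement at all: it is quoted from Behrend--Manin, who construct $\mbar(X,\tau)$ intrinsically as a stack of stable $\tau$-marked maps and establish properness and projectivity in that generality. Your first route is therefore genuinely different from the source, but it is the natural one for the definition actually used in this paper, where $\mbar(X,\tau)$ is the locus in $\mbar_{0,T}(X,A)$ of stable maps whose dual $A$-graph admits a contraction to $\tau$. That route is correct: the locus is a finite union of dual-graph strata, hence constructible, and since a specialization of stable maps only refines the dual graph (with the class $\beta$ distributing additively over the preimage vertices) and contractions compose, the locus is stable under specialization, hence closed. A closed substack of a proper Deligne--Mumford stack is proper Deligne--Mumford, and its coarse space is finite over a closed subscheme of the projective coarse space of $\mbar_{0,T}(X,A)$, hence projective. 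Note that the step you flag as the main obstacle --- that the closure of the open stratum is \emph{exactly} the union of refined strata --- is not needed for this statement; you only need the union to be closed, which is the easy direction.

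Two cautions on your ``alternative route, which I would actually carry out in detail.'' First, the displayed fiber product is in general \emph{not} isomorphic to $\mbar(X,\tau)$ as defined here: the gluing morphism from the fiber product of the vertex moduli spaces to $\mbar_{0,T}(X,A)$ is finite onto the closed locus but can fail to be injective (a contracted component sitting at a gluing node and carrying a marking can be apportioned to either side, and graph automorphisms or vertices with $\beta(w)=0$ create further identifications and stability issues), so at best you get a finite surjection onto $\mbar(X,\tau)$, which proves properness of the image but does not by itself identify the stack or its coarse space. Second, even granting properness of the fiber product, you would still need the closedness of the image inside $\mbar_{0,T}(X,A)$ to recover the paper's object, which brings you back to the specialization argument of your first route. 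So the first route is the one to carry out; the second is best viewed as a description of a finite cover of the boundary stratum, in the spirit of Behrend--Manin's and Fulton--Pandharipande's treatment of boundary divisors.
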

In particular, if we take $(\tau,\beta)$ to be the graph with one vertex $w$, $n$ tails and $\beta(w)=e$, we recover the stack $\mbar_{0.n}(X,e)$. For any $A$-graph $(\tau^{\prime},\beta^{\prime})$ with $A=e$ and $n$ tails, there is a natural inclusion $\mbar(X,\tau^{\prime})\rightarrow \mbar_{0,n}(X,e)$ induced by the natural contraction $\tau^{\prime}\rightarrow \tau$.

\begin{defn}\label{stablechainsdefn} Let $l$ be a positive integer, the space of two pointed $l$-chains of lines corresponds to the space $\mbar(X,\tau)$ where $\tau$ is represented by the $l$-graph below with $l$ vertices and two tails. The weight of each vertex is $1$. As discussed above, there is a natural inclusion $\mbar(X,\tau)\hookrightarrow \mbar_{0,2}(X,l)$.
	\begin{figure}[!ht]
		\centering
		\begin{circuitikz}
			\tikzstyle{every node}=[font=\tiny]
			\node at (3.25,10.25) [squarepole] {};
			\node at (4,10.25) [squarepole] {};
			\node at (5.25,10.25) [squarepole] {};
			\node at (6,10.25) [squarepole] {};
			\draw (3.25,10.25) to[short] (4,10.25);
			\draw (5.25,10.25) to[short] (6,10.25);
			\node at (6.5,10.75) [diamondpole] {};
			\node at (2.75,10.75) [diamondpole] {};
			\draw (2.75,10.75) to[short] (3.25,10.25);
			\draw (6,10.25) to[short] (6.5,10.75);
			\draw [dashed] (4,10.25) -- (5.25,10.25);
		\end{circuitikz}

		\label{fig:my_label}
	\end{figure}
\end{defn}
In fact, assuming we are in the situation in Theorem \ref{irredlines}, we can refine the above inclusion using the following lemma:
\begin{lem}\cite[Lemma 3.5]{nk1006}\label{uniqueirredcomp}
	Assume the space of lines in $X$, $\mbar_{0,0}(X,1)$ is irreducible. Furthermore, assume the evaluation map $ev_1:\mbar_{0,1}(X,1)\rightarrow X$ has irreducible geometric generic fiber. Then for every positive interger $e$, there is a unique irreducible component, $M_{0,e}\subset \mbar_{0,0}(X,e)$ such that the special points of $M_{0,e}$ parameterize reducible curves whose non-contracted components are all finite covers of free lines in $X$. We will denote by $M_{2,e}$, the unique irreducible component of $\mbar_{0,2}(X,e)$ that dominate $M_{0,e}$.
\end{lem}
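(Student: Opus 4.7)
The plan is to proceed by induction on $e$. For $e=1$, take $M_{0,1}=\mbar_{0,0}(X,1)$, which is irreducible by hypothesis, and let $M_{2,1}$ be the unique irreducible component of $\mbar_{0,2}(X,1)$ dominating $M_{0,1}$: over a general line $\ell$ the fiber of the forgetful map is $\ell\times\ell$, so there is a single component dominating $M_{0,1}$.

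For the inductive step, assume $M_{0,e-1}$ and $M_{2,e-1}$ are constructed, and consider the gluing morphism
$$g:\; M_{2,e-1}\times_X \mbar_{0,1}(X,1) \longrightarrow \mbar_{0,0}(X,e),$$
formed using one of the two evaluation maps on $M_{2,e-1}$ and the map $ev_1$ on $\mbar_{0,1}(X,1)$. Geometrically $g$ glues a two-pointed $(e-1)$-chain to a one-pointed line along a common image point, producing a two-pointed $e$-chain whose underlying unpointed map lands in $\mbar_{0,0}(X,e)$. I would first show the fiber product is irreducible: both factors are irreducible (by induction and by hypothesis applied to $\mbar_{0,1}(X,1)$), the base $X$ is irreducible, the evaluation $M_{2,e-1}\rightarrow X$ is dominant because a general chain contains a free line sweeping out $X$, and the generic fiber of $ev_1$ is irreducible by assumption. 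The standard fact that a fiber product of irreducible varieties over an irreducible base, with irreducible geometric generic fibers, is irreducible then yields irreducibility of the fiber product and of its image $B_e\subset\mbar_{0,0}(X,e)$, which is exactly the stratum of $e$-chains of lines.

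Next I would invoke deformation theory at a general point of $B_e$. Such a point parameterizes a chain of $e$ free lines meeting transversally at single nodes; its normal sheaf in $X$ has vanishing $H^1$ (the lines themselves are free, and the nodal analysis of the normal sheaf of a tree of rational curves preserves this vanishing when the components are free). Hence the corresponding stable map is a smooth point of $\mbar_{0,0}(X,e)$ and admits a smoothing to a free smooth rational curve of degree $e$. Since a smooth point lies on a unique irreducible component, the smoothing determines a unique component $M_{0,e}\subset\mbar_{0,0}(X,e)$ containing the generic point of $B_e$; any other component whose boundary contains such a chain would, by the same smooth-point argument, agree with $M_{0,e}$, giving the required uniqueness.

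Finally, for $M_{2,e}$, I would use the forgetful morphism $\pi:\mbar_{0,2}(X,e)\rightarrow \mbar_{0,0}(X,e)$. Over the dense open locus in $M_{0,e}$ of free maps from smooth irreducible curves $C$, the fiber of $\pi$ is $C\times C$, which is irreducible. Hence $\pi^{-1}(M_{0,e})$ contains a unique irreducible component dominating $M_{0,e}$, and this is the desired $M_{2,e}$, which closes the induction. The principal obstacle in the argument is the deformation-theoretic smoothing step: verifying the vanishing of $H^1$ of the normal sheaf of a chain of $e$ free lines and producing the smoothing to a free degree-$e$ rational curve is the technically delicate ingredient, requiring a careful normal-sheaf computation at the nodes and appeal to standard smoothing results for reducible rational curves.
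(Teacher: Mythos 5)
The paper never proves this statement --- it is imported verbatim from \cite[Lemma 3.5]{nk1006} --- so the comparison here is with the standard argument of that reference, and your proposal follows essentially that route: inductively attach a free line to the canonical component in degree $e-1$, observe that the resulting nodal configurations are unobstructed points of the Kontsevich space, and let $M_{0,e}$ be the unique component through them, with $M_{2,e}$ obtained from the universal curve. Two steps, however, need repair. First, the ``standard fact'' you invoke is false as stated: a fiber product of irreducible varieties over an irreducible base whose \emph{generic} fiber is irreducible can still have extra irreducible components sitting over the locus of $X$ where the fibers of $ev_1$ are reducible or jump in dimension. What is true, and all you need, is that $M_{2,e-1}\times_X\mbar_{0,1}(X,1)$ has a unique irreducible component dominating $X$ through the gluing point (using dominance of the marked-point evaluation on $M_{2,e-1}$ and irreducibility of the general fiber of $ev_1$), and $M_{0,e}$ should be defined via the image of that component. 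Second, your description of $B_e$ is internally inconsistent: with your gluing map the general point of the image is an irreducible degree-$(e-1)$ curve from $M_{0,e-1}$ with one line attached, \emph{not} a chain of $e$ lines, so the deformation-theoretic step must be run at such a two-component tree --- a general member of $M_{0,e-1}$, whose freeness you should carry along in the induction, together with a free line attached at a general point --- or else you should build the chain locus directly as an iterated fiber product of copies of $\mbar_{0,1}(X,1)$ over $X$. With either fix, the $H^1$-vanishing for a tree of free rational curves (normalization sequence plus global generation on each component) gives a smooth point of $\mbar_{0,0}(X,e)$, and uniqueness of the component through it follows as you say.

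One further caveat on the scope of uniqueness: the lemma asserts a single component whose special points parameterize \emph{all} stable maps whose non-contracted components are finite covers of free lines, not only chains; to literally obtain that characterization you also need connectedness of this larger locus of configurations, which your chain-only argument does not address. For the way the lemma is used in this paper (namely $\mbar(X,\tau)\subset M_{2,e}$ and the induction of Lemma \ref{inductionstatement}), the chain/tree version you prove is what is actually needed, but you should either prove the stronger statement or state the weaker one you use. The final passage from $M_{0,e}$ to $M_{2,e}$ via the forgetful map is fine, up to the harmless slip that the fiber over an embedded line is the two-point configuration space of the line rather than $\ell\times\ell$; irreducibility is unaffected.
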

It is clear that $\mbar(X,\tau)\subset M_{2,l}$.


\section{Rationally Simply Connected}\label{sectiondefnrss}
Let $X$ be a smooth projective variety over $\mb{C}$ with Picard group $\mb{Z}$. $\mbar_{0,n}(X,e)$ denotes the space of degree $e$ (with respect to the ample generator of the Picard group) $n$-pointed stable maps from genus $0$ curves to $X$. Rational simply connectedness roughly corresponds to the space of rational curves connecting two general points being rationally connected as well. Before going to the full definition, we need the following:
\begin{defn}
	For a projective variety $Y/k$, a morphism $f:\proj{}^1_k\rightarrow Y$ is called \textit{free} (respectively \textit{very free}) if $f$ is non-constant, $f(\proj{}^1)$ is contained in the non-singular locus of $Y$ and $f^*T_Y$ is globally generated (respectively ample).
\end{defn}
\begin{prop}\cite[IV.3.7]{kol}\label{veryfreerationallyconnected}
	For a smooth projective variety $Y$, the existence of a very free rational curve $f:\proj{}^1\rightarrow Y$ is equivalent to $Y$ being rationally connected.
\end{prop}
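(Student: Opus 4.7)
The plan is to establish both implications of the equivalence separately.

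For the direction ``very free curve $\Rightarrow$ rationally connected'': given $f:\proj{}^1 \to Y$ very free, first I would split $f^*T_Y \cong \bigoplus_i \mc{O}(a_i)$ with every $a_i \ge 1$. Let $H \subset \mathrm{Hom}(\proj{}^1, Y)$ be the irreducible component containing $f$, and consider the two-point evaluation $ev_2 : H \times (\proj{}^1 \times \proj{}^1 \setminus \Delta) \to Y \times Y$ sending $(g, s, t) \mapsto (g(s), g(t))$. Standard deformation theory identifies the obstruction to smoothness of $ev_2$ at $(f, 0, \infty)$ with $H^1(\proj{}^1, f^*T_Y(-2)) = \bigoplus_i H^1(\proj{}^1, \mc{O}(a_i - 2))$, which vanishes since each $a_i - 2 \ge -1$. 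Hence $ev_2$ is smooth, and in particular open, at this point, so its image contains a Zariski open neighborhood of $(f(0), f(\infty))$ in $Y \times Y$. Since $Y \times Y$ is irreducible, this image is dense, so two general points of $Y$ lie on a rational curve in the family $H$, giving rational connectedness.

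For the reverse direction, I would construct a very free curve by a combs-and-smoothing procedure. Rational connectedness together with a standard bend-and-break / generic smoothness argument first yields a free rational curve $g:\proj{}^1 \to Y$, meaning $g^*T_Y$ is globally generated. I would then iteratively increase the positivity of the splitting type: attach copies of $g$ (or of other free curves obtained by further deformation through general points) as ``teeth'' to a base rational curve at general points to form a comb $C \to Y$. A Mayer--Vietoris / normal-sheaf computation on $C$ shows the relevant $H^1$ vanishes once enough teeth are attached, so the comb deforms in a flat family to a smooth $\proj{}^1$, and a splitting-type calculation shows the smallest summand of the tangent bundle pullback strictly increases with each smoothing step. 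After finitely many iterations every summand has degree $\ge 1$ and the resulting curve is very free.

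The main obstacle is clearly the reverse direction: the easy direction is essentially a cohomological vanishing plus the openness of smooth morphisms, whereas the combs-and-smoothing construction requires careful combinatorial and deformation-theoretic bookkeeping. One must ensure that (i) teeth can be attached at sufficiently general points so the comb is unobstructed, (ii) a general fiber of the smoothing family is indeed a smooth $\proj{}^1$, and (iii) the inductive step strictly improves the splitting type of $f^*T_Y$. A complete treatment requires invoking the smoothing theorems for nodal curves together with a combinatorial argument on splitting types; this is the bulk of Koll\'ar IV.3, and I would cite rather than reproduce it.
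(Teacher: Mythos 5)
The paper offers no proof of this proposition at all --- it is imported verbatim as a citation to Koll\'ar [IV.3.7] --- and your sketch (the $H^1(\proj{}^1,f^*T_Y(-2))=0$ smoothness-of-$ev_2$ argument for the easy implication, and deferring the comb-attaching and smoothing machinery to Koll\'ar IV.3 for the converse) is precisely the standard argument from that source. So your proposal is correct and in effect takes the same route as the paper, namely relying on Koll\'ar for the substantive direction.
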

There is a relative version of the above:
\begin{defn}
	For a morphism of projective varieties $\pi:Y\rightarrow S$, a rational curve $f:\proj{}^1\rightarrow Y$ is $\pi$-relatively \textit{free} (resp. \textit{very free}) if $f(\proj{}^1)$ lies in the smooth locus of $\pi$ and $f^*T_{\pi}$ is globally generated (resp. ample).
\end{defn}
There is an analogous result for relatively very free curves:
\begin{prop}
	For a smooth morphism of projective varieties $\pi:Y\rightarrow S$, let $s:\pone\rightarrow Y$ be a $\pi$-relatively very free curve. Then a general fiber of $\pi$ is rationally connected.
\end{prop}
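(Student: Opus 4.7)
The plan is to reduce this relative statement to the absolute Proposition \ref{veryfreerationallyconnected} applied to a single fiber of $\pi$, and then to propagate rational connectedness to a general fiber via the deformation-invariance of rational connectedness for smooth proper families (Koll\'ar--Miyaoka--Mori, in characteristic zero).

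First I would consider the composition $g := \pi \circ s: \pone \to S$. In the natural case where $g$ is constant with value $t_0 \in S$, the curve $s$ factors through the smooth fiber $Y_{t_0}$; since $\pi$ is smooth along $s(\pone)$, the restriction of $T_\pi$ to $Y_{t_0}$ is canonically $T_{Y_{t_0}}$, and hence $s^* T_{Y_{t_0}} \cong s^* T_\pi$ is ample. Proposition \ref{veryfreerationallyconnected} then gives that $Y_{t_0}$ is rationally connected. If $g$ is non-constant, I would first base-change along $g$: set $\pi': Y \times_S \pone \to \pone$, and observe that $s$ lifts to a section $\sigma$ of $\pi'$ with $\sigma^* T_{\pi'} \cong s^* T_\pi$ still ample. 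One-parameter families of section-deformations of $\sigma$ (abundant since $H^0(\pone, \sigma^* T_{\pi'})$ is positive-dimensional by ampleness), evaluated at a fixed point $p_0 \in \pone$, sweep out rational curves in the fiber $Y'_{p_0} \cong Y_{g(p_0)}$; since $\sigma^* T_{\pi'}$ is globally generated, varying the deformation direction makes these sweeping curves cover every tangent direction at $\sigma(p_0)$, so $Y_{g(p_0)}$ is rationally chain-connected through $\sigma(p_0)$, and hence rationally connected.

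To finish, I would invoke the Koll\'ar--Miyaoka--Mori theorem that in characteristic zero rational connectedness is preserved under deformation in smooth proper families: since one geometric fiber of $\pi$ is rationally connected, a general fiber must be as well. The main obstacle is the non-constant case, as extracting a rationally connected fiber via sweeping requires care with the deformation theory of sections and with showing the evaluation curves cover the fiber; however, in the paper's intended application to rational curves in very twisting surfaces, the map $g$ will be constant and the simpler direct reduction to Proposition \ref{veryfreerationallyconnected} applies.
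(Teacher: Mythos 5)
Your treatment of the case $g=\pi\circ s$ constant is fine and is exactly what the paper does: reduce to Proposition \ref{veryfreerationallyconnected} on the fiber containing $s(\pone)$ and then use openness of rational connectedness in smooth proper families (\cite[IV.3.11]{kol}) to pass to a general fiber. The gap is in the non-constant case. After base-changing to a section $\sigma$ of $\pi':Y\times_S\pone\to\pone$ with $\sigma^*T_{\pi'}$ ample, your ``sweeping'' step does not work as stated. A one-parameter family of deformations of $\sigma$, evaluated at a fixed $p_0$, traces a curve in the fiber $Y'_{p_0}$, but there is no reason this curve is \emph{rational}: that would require rational curves through $[\sigma]$ in the space of sections (or a rationally parametrized family), which ampleness of $\sigma^*T_{\pi'}$ does not provide. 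Moreover, even granting rational curves through $\sigma(p_0)$ covering every tangent direction, that alone is not the same as exhibiting a bounded family of rational curves through $\sigma(p_0)$ dominating the fiber, which is what you would need for rational chain connectedness through that point; as written, ``covers every tangent direction, hence rationally chain connected'' is a non sequitur. This is precisely the hard content of the statement: the paper's proof of this case consists of citing Lemma \ref{relativelyveryfreefibers}, whose proof first degenerates the section (deformations fixing a point plus the rigidity lemma) to produce a \emph{vertical} rational curve, which only gives uniruledness of the general fiber, and then upgrades uniruledness to rational connectedness via the relative MRC quotient and the Graber--Harris--Starr theorem. Your sketch compresses both of these steps into an unjustified assertion.

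Your fallback remark that in the intended application $g$ will be constant is also not accurate: the relatively very free curves that actually occur in the paper (in the definition of very twistable curves and in Proposition \ref{rationallyconnected}) are sections of fibrations over $\pone$, so the non-constant case is the one that carries the weight. As it stands, the proposal proves only the constant case; to complete it you would need to reproduce (or cite) an argument of the type in Lemma \ref{relativelyveryfreefibers}.
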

\begin{proof}
	From \cite[IV.3.11]{kol}, the fiber being rationally connected is an open condition on $S$. If $s(\pone)$ is contained in a fiber of $S$ then by Proposition \ref{veryfreerationallyconnected}, it is rationally connected and we are done. Otherwise, we may consider the base-change of $\pi$ through the composite map $\pi\circ s$ and apply Lemma \ref{relativelyveryfreefibers}.
\end{proof}
Let $X$ be a smooth projective variety with $Pic (X) \cong \mb{Z}$. Let $ev:\mbar_{0,1}(X,1)\rightarrow X$ denote the space of $1$-pointed rational curves in $X$ with degree $1$ (with respect to the ample generator of the Picard group) along with the natural evaluation map. Similarly $\mbar_{0,0}(X,e)$ denotes the stack of genus $0$ stable maps of degree $e$. The following notions are a slight modification to the definitions in \cite{twistabledefn}.
\begin{defn}\label{twistabledef} Consider a curve $\zeta \in \mbar_{0,0}(X,e)$ corresponding to the map $a_{\zeta}:C\rightarrow X$. $\zeta$ is called \textit{twistable} (respectively \textit{very twistable}) if:
	\begin{enumerate}
		\item $[\zeta]$ is a free curve in $X$.
		\item There is a section $s$ of $ev:\mbar_{0,1}(X,1)\prescript{}{ev}{\times}_{a_{\zeta}} C\rightarrow C$ corresponding to a family of pointed lines $(\pi:\Sigma \rightarrow C, \sigma, h)$.
		\item Points of $s(C)$ parametrizes unobstructed lines in $X$.
		\item $s$ is $ev$-relatively free (respectively very free)
		\item $\deg \mc{N}_{\sigma(C)/\Sigma}\ge 0$ where $\mc{N}_{\sigma(C)/\Sigma}$ is the normal bundle of the section $\sigma$.
	\end{enumerate}
\end{defn}
\begin{rem}\label{normalbundledefinedglobally}
	Observe that there is a universal family $\pi:\mc{C}\rightarrow \mbar_{0,1}(X,1), \sigma:\mbar_{0,1}(X,1)\rightarrow \mc{C}$ of $1$-pointed lines. Thus the normal bundle of the section $\mc{N}_{\sigma}$ is a globally defined line bundle on $\mbar_{0,1}(X,1)$
\end{rem}
There is a related notion of $\textit{twisting}$ surfaces from \cite{nk1006} which we now state.
\begin{defn}\label{twistingsurfacedefn}
	Let $n$ be a non-negative integer. Let $\pi:\Sigma \rightarrow \pone, f:\Sigma \rightarrow X$ be a family of rational curves in $X$ where $\Sigma$ is a Hirzebruch surface of type $h$. Let $F$ denote a general fiber of $\pi$ and $F^{\prime}$ denote an irreducible divisor on $\Sigma$ with $(F^{\prime}.F^{\prime})=h$. We say the family above is an \textit{$n$-twisting} surface in $X$ if:
	\begin{enumerate}
		\item $f^*T_X$ is a globally generated vector bundle on $\Sigma$.
		\item The map $(f,\pi):\Sigma\rightarrow X\times \pone$ is finite.
		\item The twisted normal bundle $N_{(f,\pi)}(-F^{\prime}-nF)$ has vanishing first cohomology.
	\end{enumerate}
	If $\beta^{\prime} = [f(F^{\prime})]$ and $\beta=[f(F)]$ are the curve classes in $X$, then we say the twisting surface has class $(\beta,\beta^{\prime})$.
\end{defn}
These two notions are indeed related. We have the following lemma:
\begin{lem}
	Let $a:\pone \rightarrow X$ be a twistable (resp. very twistable) curve. Let $(\pi:\Sigma \rightarrow C, \sigma, h)$ be the corresponding family of lines in $X$. Furthermore, assume $\Sigma \cong \pone\times\pone$ with $\pi$ being one of the projections. Then $(\pi:\Sigma\rightarrow \pone,h:\Sigma\rightarrow X)$ is a $1$-twisting (resp. $2$-twisting) surface.
\end{lem}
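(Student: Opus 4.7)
The plan is to verify each of the three conditions of Definition \ref{twistingsurfacedefn} (with the map $f$ there replaced by $h$) using the corresponding ingredients of Definition \ref{twistabledef}. Since $\Sigma \cong \pone \times \pone$ is the Hirzebruch surface of type $0$, I take $F$ to be a fiber of $\pi$ and $F'$ to be a divisor in the class of the other ruling, so that $(F' \cdot F') = 0$ and $F \cdot F' = 1$; when $\deg \mc{N}_{\sigma(C)/\Sigma} = 0$, one may simply take $F' = \sigma(C)$.

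Condition (2) is immediate: the only curves contracted by $\pi$ are its fibers, and the restriction of $h$ to each such fiber parametrizes a line in $X$ and so is non-constant; hence $(h,\pi)$ contracts no curve and is finite. For condition (1), note that $\sigma^*h^*T_X = a^*T_X$ is globally generated on $\sigma(C)$ by twistable condition (1), while on each fiber $F$ the restriction $h^*T_X|_F \cong T_X|_\ell$ is globally generated because the unobstructedness (condition (3)) together with the $ev$-relative freeness of $s$ (condition (4)) forces the parametrized line $\ell$ to actually be free. A standard pushforward argument on $\pone \times \pone$ then upgrades these fiberwise statements to global generation of $h^*T_X$ on all of $\Sigma$.

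The heart of the proof is condition (3). From the differential of $(h,\pi)$ and the non-contraction of fibers I would obtain the short exact sequence
$$0 \to T_\pi \to h^*T_X \to N_{(h,\pi)} \to 0.$$
Twisting by $\mc{O}_\Sigma(-F' - nF)$ and using $T_\pi \cong \mc{O}(0,2)$ together with the direct vanishing $H^2(\pone\times\pone,\mc{O}(-n,1)) = 0$, the long exact sequence reduces the desired $H^1(\Sigma, N_{(h,\pi)}(-F' - nF)) = 0$ to $H^1(\Sigma, h^*T_X(-F' - nF)) = 0$. Applying Leray along $\pi$, the global generation of $h^*T_X|_F$ established above makes $R^1\pi_*(h^*T_X \otimes \mc{O}(-F'))$ vanish, so it is enough to prove that the vector bundle $E := \pi_*(h^*T_X \otimes \mc{O}(-F'))$ on $C \cong \pone$ satisfies $H^1(E(-n)) = 0$. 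A fiberwise computation identifies $E$, modulo the line subbundle coming from $\sigma^*T_\pi = \mc{N}_{\sigma(C)/\Sigma}$, with $s^*T_{ev}$; then the $ev$-relative freeness (resp.\ very freeness) from condition (4), together with $\deg \mc{N}_{\sigma(C)/\Sigma} \ge 0$ from condition (5), propagates to give $H^1(E(-n)) = 0$ for $n = 1$ (resp.\ $n = 2$).

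The main obstacle is this final identification: matching the pushforward $E$ with an extension involving $s^*T_{ev}$ is a deformation-theoretic calculation of the tangent space of $\mbar_{0,1}(X,1)$ along $s$, and the delicate points are aligning the two marked points (the one cut out by $F'$ and the one given by $\sigma$) and ensuring that the degree control from condition (5) suffices to carry the positivity through. The assumption $\Sigma \cong \pone \times \pone$ keeps the intersection-theoretic bookkeeping tractable and gives a canonical choice of $F'$ with $(F')^2 = 0$.
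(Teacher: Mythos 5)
Your reduction of condition (3) to a cohomology vanishing for $h^*T_X$, rather than for $N_{(h,\pi)}$ itself, is where the argument breaks. Pushing $0\to T_\pi\to h^*T_X\to N_{(h,\pi)}\to 0$ twisted by $\mc{O}(-F')$ down to $C$ gives $0\to \pi_*(T_\pi(-F'))\to E\to \pi_*(N_{(h,\pi)}(-F'))\to 0$ with $E=\pi_*(h^*T_X(-F'))$; here $\pi_*(T_\pi(-F'))\cong \mc{O}_{\pone}^{\oplus 2}$ (a rank-two trivial bundle, not the line bundle $\mc{N}_{\sigma(C)/\Sigma}$ you claim), and since $\sigma(\pone)\equiv F'+mF$ with $m=\deg\mc{N}_{\sigma(C)/\Sigma}$, the quotient is $s^*T_{ev}\otimes\mc{O}(m)$, not $s^*T_{ev}$. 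With the corrected bookkeeping your route does yield $H^1(E(-1))=0$, hence the $1$-twisting half; but for $n=2$ the term $H^1(\pone,\mc{O}(-2)^{\oplus 2})\neq 0$ sits in the long exact sequence, so $H^1(E(-2))=0$ cannot be deduced from the ampleness of $s^*T_{ev}$ (and in general fails), and your sufficient condition $H^1(\Sigma,h^*T_X(-F'-2F))=0$ is simply too strong: $h^*T_X$ contains $T_\pi\cong\mc{O}(0,2)$, whose twist by $-F'-2F$ already has nonvanishing $H^1$. The paper never passes through $h^*T_X$ for condition (3): it works directly with $N_{(h,\pi)}$, uses the identification $\pi_*N_{(h,\pi)}(-\sigma(\pone))\cong s^*T_{ev}$ of Lemma \ref{descoftev}, and from $\sigma(\pone)\equiv F'+mF$, $m\ge 0$ (condition (5)) gets $H^1(\Sigma,N_{(h,\pi)}(-\sigma(\pone)-F))=0$ (resp.\ $-2F$), i.e.\ $(m+1)$- resp.\ $(m+2)$-twisting, which implies $1$- resp.\ $2$-twisting.

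There is a second gap in your condition (1): global generation of $h^*T_X$ on every fiber of $\pi$ together with global generation along the single section $\sigma(C)$ does \emph{not} imply global generation on $\Sigma\cong\pone\times\pone$ — a bundle such as $\mc{O}(-1,1)$ (degree $1$ on fibers, degree $m-1\ge 0$ on a section of class $F'+mF$ with $m\ge 1$) is globally generated on all those curves but has no global sections transverse to the ruling. The missing global input is again the positivity of $s^*T_{ev}\cong \pi_*N_{(h,\pi)}(-\sigma(\pone))$: the paper first shows $N_{(h,\pi)}$ is globally generated (restriction to $\sigma(\pone)$ via freeness of $a$, restriction to fibers via freeness of the lines, and the pushforward sequence with the DeLand identification), and then deduces global generation of $h^*T_X$ from $0\to T_\Sigma\to h^*T_X\oplus\pi^*T_{\pone}\to N_{(h,\pi)}\to 0$ together with $H^1(T_\Sigma)=0$. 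Your condition (2) argument and the choice of $F'$ on $\pone\times\pone$ are fine.
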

\begin{proof}
	To check that $h^*T_X$ is globally generated, note that we have the following exact sequence:
	$$0\rightarrow T_{\Sigma}\rightarrow h^*T_X\oplus \pi^*T_{\pone}\rightarrow N_{(h,\pi)}\rightarrow 0$$
	Since $T_{\Sigma}$ is globally generated, it is enough to check that $N_{(h,\pi)}$ is globally generated. Pulling back the above sequence via $\sigma$, we observe that $\sigma^*h^*T_X\oplus T_{\pone}$ surjects onto $\sigma^*N_{(h,\pi)}$. Since $a=h\cdot\sigma$ is a free curve, $\sigma^*N_{(h,\pi)}$ is globally generated. We have the following exact sequence on $\Sigma$:
	$$0\rightarrow N_{(h,\pi)}(-\sigma(\pone))\rightarrow N_{(h,\pi)}\rightarrow N_{(h,\pi)}\vert_{\sigma(\pone)}\rightarrow 0$$
	Since fibers of $\pi$ parametrizes free lines in $X$, we have: $h^1(F,N_{(\pi,h)}(-\sigma(\pone))\vert_F)=0$ where $F$ is any fiber of $\pi$. Thus $R^1\pi_*(N_{(\pi,h)}(-\sigma(\pone))) =0$. So we can push-forward the above exact sequence via $\pi$ to get:
	$$0\rightarrow \pi_* N_{(h,\pi)}(-\sigma(\pone))\rightarrow \pi_* N_{(h,\pi)}\rightarrow \sigma^*N_{(h,\pi)}\rightarrow 0$$
	From Lemma \ref{descoftev}, we have $\pi_* N_{(h,\pi)}(-\sigma(\pone)$ is globally generated. Hence $\pi_*N_{(h,\pi)}$ is globally generated. Moreover, since $N_{(h,\pi)}\vert_F$ is globally generated for any fiber $F$ of $\pi$, we have that $N_{(h,\pi)}$ is globally generated.\\
	Condition $(2)$ is immediate. For condition $(3)$, observe that, since $\deg \mc{N}_{\sigma(\pone)/\Sigma}\ge 0$, we have: $\sigma(\pone)=F^{\prime}+mF$ for some $m\ge 0$. Moreover, by assumption and Lemma \ref{descoftev}, $\pi_*N_{(h,\pi)}(-\sigma(\pone))$ is globally generated and hence $h^1(\pone,\pi_*N_{(h,\pi)}(-\sigma(\pone)-F))=0$. As before, we also have: $R^1\pi_*N_{(h,\pi)}(-\sigma(\pone)-F)=0$. Thus, from the Leray spectral sequence, we get: $H^1(\Sigma,N_{(h,\pi)}(-\sigma(\pone)-F))=0\implies H^1(\Sigma,N_{(h,\pi)}(-F^{\prime}-(m+1)F)=0$ for some $m\ge0$. Thus $\Sigma$ is $(m+1)$-twisting $\implies$ $\Sigma$ is $1$-twisting. The argument for very twistable implies $2$-twisting is very similar.
\end{proof}
\begin{rem}\label{classoftwistingsurface}
	Note that if $\sigma(\pone)=F^{\prime}$ in the above, the class of the twisting surface is $([L], [a(\pone)])$ where $[L]$ is the class of a line.
\end{rem}
For convenience, we include the following two lemmas that will be necessary later on.
\begin{lem}\label{deformtwistable}\cite[Corollary 7.7(i)]{nk1006}
	Let $f:\Sigma \rightarrow X$ be an $n$-twisting surface with $F^{\prime}$ and $F$ as in Definition \ref{twistingsurfacedefn}. Assume $\Sigma \cong \pone\times\pone$. Let $D \equiv F^{\prime} + nF$ be a reduced divisor. Then any small deformation $(D_1,g)$ of $(D, f\vert_D)$ in $\mbar_{0,0}(X)$ is contained in an $n$-twisting surface $f_1:\Sigma_1 \rightarrow X$ with $D_1\equiv F^{\prime}+nF$ and $g=f_1\vert_{D_1}$. Moreover, we can take $\Sigma_1\cong \pone\times\pone$.
\end{lem}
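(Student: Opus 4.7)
The proof is a standard deformation theory argument in which the $n$-twisting condition supplies the critical cohomological vanishing. First I record two rigidity facts that simplify the setup: $\Sigma \cong \pone\times\pone$ satisfies $H^1(\Sigma, T_\Sigma)=0$, so any small deformation of $\Sigma$ as an abstract surface is again isomorphic to $\pone\times\pone$; and the defining vanishing $H^1(\Sigma, N_{(f,\pi)}(-F'-nF))=0$ for being $n$-twisting is an open condition, hence preserved under small deformations of the data $(\Sigma,\pi,f)$.

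Consider the moduli stack $\mc{T}$ parameterizing tuples $(\Sigma_1, \pi_1\colon\Sigma_1\to\pone, f_1\colon\Sigma_1\to X, D_1\subset\Sigma_1)$ with $\Sigma_1\cong\pone\times\pone$ and $D_1\equiv F'+nF$, together with the forgetful morphism
$$\Phi\colon\mc{T}\to\mbar_{0,0}(X),\qquad (\Sigma_1,\pi_1,f_1,D_1)\longmapsto (D_1,f_1|_{D_1}).$$
I plan to show that $\Phi$ is smooth at the point representing $(\Sigma,\pi,f,D)$: lifting the given deformation $(D_1,g)$ through $\Phi$ then produces exactly the desired family $(\Sigma_1,\pi_1,f_1)$. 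The engine of the argument is the short exact sequence
$$0\to N_{(f,\pi)}(-D)\to N_{(f,\pi)}\to N_{(f,\pi)}|_D\to 0,$$
combined with the $n$-twisting hypothesis $H^1(\Sigma, N_{(f,\pi)}(-D))=0$. Its long exact sequence gives surjectivity $H^0(\Sigma, N_{(f,\pi)})\twoheadrightarrow H^0(D,N_{(f,\pi)}|_D)$, which is precisely the surjectivity of $d\Phi$ on tangent spaces, and injectivity $H^1(\Sigma, N_{(f,\pi)})\hookrightarrow H^1(D,N_{(f,\pi)}|_D)$, which ensures that obstructions to lifting higher-order deformations of $(D,f|_D,\pi|_D)$ to the whole tuple all come from $\mbar_{0,0}(X)$. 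By standard infinitesimal deformation theory this is enough to conclude that $\Phi$ is smooth at our point.

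The main difficulty is that the input deformation lives in $\mbar_{0,0}(X)$ rather than $\mbar_{0,0}(X\times\pone)$, so the map to $\pone$ has to be reconstructed. My plan is to first lift $(D_1,g)$ to a deformation in $\mbar_{0,0}(X\times\pone)$ by appending the map $\pi|_D\colon D\to\pone$, using the fact that $\pi|_D$ is determined by the embedding of $D$ as a $(1,n)$-divisor in $\pone\times\pone$; this datum deforms canonically along with $D_1$, and the obstruction to lifting is again controlled by a cohomology group that vanishes for the same reason. With the lift in hand, the normal-bundle argument of the previous paragraph produces the quadruple $(\Sigma_1,\pi_1,f_1,D_1)$, and the rigidity and openness statements from the first paragraph guarantee that $\Sigma_1\cong\pone\times\pone$ and that $(\Sigma_1,\pi_1,f_1)$ remains $n$-twisting, completing the proof.
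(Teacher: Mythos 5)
This lemma is not proved in the paper at all: it is imported verbatim from \cite[Corollary 7.7(i)]{nk1006}, so there is no internal argument to compare yours against. Your outline does follow what is surely the intended strategy of the cited source: rigidity of $\pone\times\pone$, openness of the twisting property, and smoothness of the forgetful map from deformations of the tuple $(\Sigma,\pi,f,D)$ to deformations of the stable map $(D,f\vert_D)$, with $H^1(\Sigma,N_{(f,\pi)}(-F^{\prime}-nF))=0$ as the engine. That identification of the key vanishing is correct. However, two of your justifications are wrong as written, and they sit exactly where the work lies. First, the tangent space of $\mbar_{0,0}(X\times\pone)$ at $(D,(f,\pi)\vert_D)$ is $H^0(D,N_{(f,\pi)\vert_D})$, the normal sheaf of the \emph{restricted map}, not $H^0(D,N_{(f,\pi)}\vert_D)$; the two differ by the subsheaf $N_{D/\Sigma}\cong\mc{O}_D(D)$ recording first-order motions of $D$ inside $\Sigma$. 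So the surjection $H^0(\Sigma,N_{(f,\pi)})\twoheadrightarrow H^0(D,N_{(f,\pi)}\vert_D)$ is not literally the surjectivity of $d\Phi$; you must combine $0\to N_{(f,\pi)}(-D)\to N_{(f,\pi)}\to N_{(f,\pi)}\vert_D\to 0$ with $0\to N_{D/\Sigma}\to N_{(f,\pi)\vert_D}\to N_{(f,\pi)}\vert_D\to 0$, using that motions of $D$ inside $\Sigma$ are themselves part of the data parametrized by $\mc{T}$. The conclusion (relative obstruction in $H^1(\Sigma,N_{(f,\pi)}(-D))$, hence smoothness) is right, but this bookkeeping is the actual content and is missing.

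Second, the recovery of the $\pone$-factor is misattributed: $\pi\vert_{D_1}$ does not ``deform canonically,'' and the obstruction to lifting a deformation from $\mbar_{0,0}(X)$ to $\mbar_{0,0}(X\times\pone)$ has nothing to do with the twisting hypothesis. Since $D\cdot F=1$, the map $\pi\vert_D$ has degree $1$, and what makes the forgetful map $\mbar_{0,0}(X\times\pone,(\beta^{\prime}+n\beta,1))\to\mbar_{0,0}(X,\beta^{\prime}+n\beta)$ smooth at this point is $H^1(D,(\pi\vert_D)^*T_{\pone})=0$, which holds because $D$ is a tree of rational curves with the pullback of degree $2$ on the component dominating $\pone$ and degree $0$ on the contracted components. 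Finally, ``$n$-twisting'' consists of three conditions and you only address openness of condition (3): finiteness of $(f_1,\pi_1)$ is indeed open, but global generation of $f_1^*T_X$ is not an open condition for an arbitrary flat family (sections of the central fiber need not extend when $h^0$ jumps), so it needs its own argument. None of this looks fatal, but as written the proposal asserts precisely the steps that have to be proved.
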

\begin{lem}\label{uniontwistable}\cite[Lemma 7.6(v)]{nk1006}
	For $i=1,2$, let $f_i:\Sigma_{i}\rightarrow X, \pi_i:\Sigma_i\rightarrow \pone$ be $n_i$-twisting surfaces with $n_i\ge 0$ of class $(\beta, \beta_i)$. Assume $\Sigma_i\cong \pone\times\pone$. Moreover, let $F_i$ be fibers of $\pi_i$ such that: $$f_1\vert_{F_1}=f_2\vert_{F_2}$$
	Then there exists an $n_1+n_2-1$-twisting surface in $X$ isomorphic to $\pone\times\pone$ and of class $(\beta,\beta_1+\beta_2)$
\end{lem}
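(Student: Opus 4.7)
The plan is a gluing-and-smoothing argument. First, using the identification $f_1\vert_{F_1} = f_2\vert_{F_2}$, I would glue $\Sigma_1$ and $\Sigma_2$ along the common fiber to obtain a reducible nodal surface $\Sigma_0 = \Sigma_1 \cup_F \Sigma_2$, equipped with a projection $\pi_0: \Sigma_0 \to C_0$ to a nodal chain $C_0 = \pone \cup_p \pone$ and a morphism $f_0: \Sigma_0 \to X$ obtained by gluing $f_1$ and $f_2$. By composing each $f_i$ with a ruling-preserving automorphism of $\Sigma_i$ if necessary, I would arrange the sections $F'_1$ and $F'_2$ to meet the shared fiber at corresponding points, so that $F'_1 \cup F'_2$ is a connected nodal section of $\pi_0$.

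Next, I would smooth $\Sigma_0$ to $\pone \times \pone$. The base $C_0$ has a standard one-parameter smoothing $\mathcal{C} \to \Delta$ to a smooth $\pone$; since the $\Sigma_i$ are trivial $\pone$-bundles, the family $\mathcal{S} := \pone \times \mathcal{C} \to \Delta$ has central fiber $\Sigma_0$ and generic fiber $\pone \times \pone$. The morphism $f_0$ extends to $f: \mathcal{S} \to X$ provided the relevant first-order obstruction vanishes, which follows from the twisting hypotheses on $f_i$ combined with the global generation of $f_i^*T_X$ restricted to the shared fiber (standard deformation theory for maps from families of nodal surfaces). The classes come out correctly on $\mathcal{S}_t$ for $t\ne 0$: a general fiber degenerates to a fiber of some $\pi_i$ and thus has image class $\beta$, while a section $F'$ specializes to $F'_1 \cup F'_2$ and hence has image class $\beta_1 + \beta_2$.

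For the twisting cohomology, semicontinuity reduces the problem to showing $H^1(\Sigma_0, N_{(f_0, \pi_0)}(-\tilde{F}' - (n_1+n_2-1)\tilde{F})) = 0$, where $\tilde{F}' = F'_1 \cup F'_2$ and $\tilde{F}$ is a general fiber of $\pi_0$. I would analyze this via the Mayer--Vietoris sequence associated to $\Sigma_0 = \Sigma_1 \cup_F \Sigma_2$. After tracking the twists induced by the common fiber, the restriction to $\Sigma_i$ can be identified with a sheaf of the form $N_{(f_i, \pi_i)}(-F'_i - n_i F_i)$, whose $H^1$ vanishes by hypothesis. The crucial bookkeeping is that the common fiber $F$ is shared between the twists by fibers on each side, which produces the shift $n_1+n_2-1$ rather than $n_1+n_2$.

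The main obstacle is this Mayer--Vietoris computation: one must correctly identify the restriction of $N_{(f_0, \pi_0)}$ to each component (which differs from $N_{(f_i, \pi_i)}$ because the target changes from $X \times \pone$ to $X \times C_0$), and verify surjectivity of the restriction map $H^0(\Sigma_1,\mathcal{L}|_{\Sigma_1}) \oplus H^0(\Sigma_2,\mathcal{L}|_{\Sigma_2}) \to H^0(F,\mathcal{L}|_F)$ in the Mayer--Vietoris sequence, which should follow from the global generation available on fibers of the twisting surfaces together with $n_i \ge 0$. The remaining twisting conditions, namely global generation of $f^*T_X$ and finiteness of $(f,\pi)$, are open in families and hold on a general $\mathcal{S}_t$ by openness and the corresponding conditions satisfied by $f_0$.
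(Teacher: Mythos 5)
This lemma is not proved in the paper at all: it is imported verbatim from \cite[Lemma 7.6(v)]{nk1006}, so there is no in-paper argument to measure you against. Your outline --- glue $\Sigma_1$ and $\Sigma_2$ along the identified fiber to get a surface over a nodal curve, smooth the domain, and control everything by semicontinuity and a Mayer--Vietoris computation --- is indeed the expected strategy of the cited source, and the reductions you set up (Cartier-ness of $F_1'\cup F_2'$, the product smoothing $\pone\times\mathcal{C}$, openness of finiteness) are fine. The problem is that the two steps you defer to ``standard deformation theory'' and ``crucial bookkeeping'' are precisely the content of the lemma, and the justifications you do offer for them are not sufficient as stated.

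Concretely: (i) you claim the extension of $f_0$ over the smoothing is unobstructed because of ``global generation of $f_i^*T_X$ restricted to the shared fiber.'' On a surface, global generation does not kill $H^1$: on $\pone\times\pone$ the cokernel of a nowhere-vanishing map $\mc{O}(-2,-2)\to\mc{O}^{\oplus 3}$ is globally generated with $H^1\neq 0$. What is needed is $H^1(\Sigma_0,f_0^*T_X)=0$ (or better, vanishing for the normal sheaf of the glued finite map to $X\times C_0$), and this must be extracted from condition (3) of Definition \ref{twistingsurfacedefn} via the sequence $0\to T_{\Sigma_i}\to f_i^*T_X\oplus\pi_i^*T_{\pone}\to N_{(f_i,\pi_i)}\to 0$ together with restriction sequences along $F'_i$ and the fibers --- an argument you never make. (ii) In the same vein, the Mayer--Vietoris surjectivity onto $H^0(F,\cdot)$ that you invoke requires $H^1(\Sigma_i,\cdot(-F_i))=0$ on at least one component; this is \emph{not} a consequence of fiberwise global generation plus $n_i\ge 0$ (with $n_i=0$ the pushforward can acquire an $\mc{O}(-1)$ summand), so the twists have to be distributed asymmetrically, using the component with $n_i\ge 1$, rather than ``shared'' between the two sides as you suggest; note also that with $n_1=n_2=0$ the conclusion would be a $(-1)$-twisting surface, which is outside the paper's definition. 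This asymmetric placement of the $n_1+n_2-1$ fiber twists, together with the identification of $N_{(f_0,\pi_0)}|_{\Sigma_i}$ as an elementary modification of $N_{(f_i,\pi_i)}$ along $F$ (the target degenerates from $X\times\pone$ to $X\times C_0$), is exactly where the $-1$ comes from, and your sketch only promises it. (iii) Finally, the semicontinuity step presupposes that the relative normal sheaf is flat over the smoothing family with the stated central fiber; since both the central domain and the central target are singular along $F$, this is a check, not a formality. So: right strategy, but the decisive cohomological identifications are missing and the unobstructedness claim is mis-justified as written.
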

Combining the two lemmas above, we have:
\begin{cor}\label{addtwistables}
	Let $f_i:\Sigma_{i} \cong \pone\times\pone \rightarrow X$ be two $1$-twisting surfaces of classes $(\beta,\beta_1)$ and $(\beta,\beta_2)$ where $\beta = [L]$ is the class of a line in $X$. Moreover, assume that the space of lines through a general point of $X$ is irreducible. Then there are $1$-twisting surfaces of classes $(\beta,a_1\beta_1+a_2\beta_2)$ for any non-negative integers $a_1$ and $a_2$.
\end{cor}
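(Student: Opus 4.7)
The strategy is induction on $N = a_1 + a_2$, the base cases $N = 1$ being given by $\Sigma_1$ and $\Sigma_2$ themselves. For the inductive step, suppose a $1$-twisting surface $\Sigma$ of class $(\beta, a_1\beta_1 + a_2\beta_2)$ with $\Sigma \cong \pone \times \pone$ has already been produced; I wish to combine it with $\Sigma_1$ via Lemma \ref{uniontwistable} to obtain a $1$-twisting surface of class $(\beta, (a_1+1)\beta_1 + a_2\beta_2)$ (the symmetric step using $\Sigma_2$ is identical). With $n_1 = n_2 = 1$, the lemma yields a $(1+1-1) = 1$-twisting surface of the desired class, provided we can choose fibers $F \subset \Sigma$ and $F_1 \subset \Sigma_1$ with $f|_F = f_1|_{F_1}$ as stable maps to $X$.

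The heart of the proof is arranging this common-fiber condition, possibly after deforming $\Sigma$ through $1$-twisting surfaces of the same class. Fix a general fiber $F_1 \subset \Sigma_1$ and let $L_1 := f_1(F_1)$, a line in $X$. I claim that $\Sigma$ can be deformed, through $1$-twisting surfaces of class $(\beta, a_1\beta_1 + a_2\beta_2)$ all isomorphic to $\pone \times \pone$, so that some fiber of the deformed surface maps onto $L_1$. To see this, consider the reducible divisor $D = F' + F$ on $\Sigma$, with $F$ a general fiber of $\pi$ and $F'$ a section satisfying $[f(F')] = a_1\beta_1 + a_2\beta_2$. By Lemma \ref{deformtwistable} with $n = 1$, every small deformation of the stable map $(D, f|_D)$ in $\mbar_{0,0}(X)$ is contained in a new $1$-twisting surface of the same class (still $\cong \pone \times \pone$); in particular, $f|_F$ can be moved infinitesimally to any nearby line, with $F'$ deformed compatibly. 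Since $\mbar_{0,0}(X,1)$ is irreducible by the unpointed version of Theorem \ref{irredlines}, the locus of lines realized as a fiber of some $1$-twisting surface of class $(\beta, a_1\beta_1 + a_2\beta_2)$ is open, nonempty, and hence dense. A finite chain of small deformations therefore connects the original fiber of $\Sigma$ to $L_1$, producing the required surface $\Sigma'$, and applying Lemma \ref{uniontwistable} to $\Sigma'$ and $\Sigma_1$ at their common fiber over $L_1$ completes the induction.

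The chief obstacle is this last step: upgrading the purely local deformation statement of Lemma \ref{deformtwistable} into a global deformation reaching the specified line $L_1$. The argument rests essentially on the irreducibility of $\mbar_{0,0}(X,1)$ proved in Section \ref{sectionlinesareirred}, together with the care needed to verify that every intermediate line in the deformation chain remains in the unobstructed/good locus where twistability is well-behaved. Since unobstructedness is a generic open condition and the realizable locus is open and nonempty, irreducibility forces density inside $\mbar_{0,0}(X,1)$, which is exactly what is needed.
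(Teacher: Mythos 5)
Your overall plan -- arrange a common fiber and then glue with Lemma \ref{uniontwistable}, using Lemma \ref{deformtwistable} to keep everything inside $1$-twisting surfaces of the right class -- is exactly the combination the paper has in mind (the paper states the corollary as an immediate consequence of those two lemmas plus irreducibility of the space of lines). But the decisive step in your write-up has a gap: you fix $\Sigma_1$, fix a general fiber $F_1$ with image $L_1$, and then claim that because the locus $U$ of lines realized as fibers of $1$-twisting surfaces of class $(\beta, a_1\beta_1+a_2\beta_2)$ is open, nonempty, and hence dense in the irreducible space $\mbar_{0,0}(X,1)$, ``a finite chain of small deformations connects the original fiber of $\Sigma$ to $L_1$.'' Density of $U$ does not put the specific line $L_1$ in $U$: the fibers of the \emph{fixed} surface $\Sigma_1$ sweep out only a one-parameter family of lines, and a priori that whole family could lie in the proper closed complement of $U$. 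Nor can a chain of small deformations help, since small deformations only move you within the open locus $U$ itself; they never reach a prescribed point outside it. So as written the common-fiber condition needed for Lemma \ref{uniontwistable} is not established.

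The repair is to treat the two surfaces symmetrically rather than keeping $\Sigma_1$ rigid. The same argument you apply to $\Sigma$ (via Lemma \ref{deformtwistable} with $n=1$, deforming the reduced divisor $F'+F$ and using freeness of the components to move the fiber line) shows that the locus $U_1$ of lines occurring as fibers of $1$-twisting surfaces of class $(\beta,\beta_1)$ is also dense in $\mbar_{0,0}(X,1)$. Since $\mbar_{0,0}(X,1)$ is irreducible (Theorem \ref{irredlines}), the dense loci $U$ and $U_1$ intersect; choosing a line $L$ in the intersection gives surfaces $\Sigma'$ and $\Sigma_1'$ of the two classes with fibers mapping isomorphically onto the same line $L$, so the hypothesis $f_1\vert_{F_1}=f_2\vert_{F_2}$ of Lemma \ref{uniontwistable} holds and the gluing produces a $(1+1-1)$-twisting surface of class $(\beta, a_1\beta_1+a_2\beta_2+\beta_1)$, completing your induction. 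A secondary point worth making explicit is the assertion that nearby lines are actually attained as the fiber component of small deformations of $(F'+F, f\vert_{F'+F})$; this needs the freeness of $f\vert_{F'}$ (so the attaching point can move and the evaluation is submersive there), which you invoke only implicitly.
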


Now we are ready to define rationally simply connectedness.
\begin{defn}\label{rss}
	Let $ev_1:\mbar_{0,1}(X,1)\rightarrow X$ and $ev_{2,e}:\mbar_{0,2}(X,e)\rightarrow X\times X$ denote the respective evaluation morphisms. Then $X$ is rationally simply connected if:
	\begin{enumerate}
		\item A general fiber of $ev_1$ is irreducible, rationally connected and non-singular.
		\item For large enough $e>0$ there are irreducible components $M_{2,e}\subset \mbar_{0,2}(X,e)$ such that $ev_{2,e}|_{M_{2,e}}$ is dominant and has rationally connected geometric generic fiber.
		\item There is a very twistable curve $a:\proj{}^1\rightarrow X$
	\end{enumerate}
\end{defn}

\begin{hyp}\label{rsshyp}
	$X\subset OG=OG(k,V)$ (where $\dim V=n+1$) is a general degree $d$ hypersurface satisfying the following inequality:
	$$n+1-8k-4\ge 3kd^2-d^2-d$$
\end{hyp}
We will state the main theorem here:
\begin{thm}\label{mainthm}
	Let $V$ be a $n+1$-dimensional vector space with a symmetric, non-degenerate bilinear form. Let $OG(k,V)$ denote the orthogonal grassmannian parameterizing $k$-dimensional isotropic subspaces of $V$. Let $X\subset OG(k,V)$ be a general degree $d$ hypersuface satisfying Assumption \ref{rsshyp}. Then $X$ is rationally simply connected.
\end{thm}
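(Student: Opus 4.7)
The plan is to verify the three conditions of Definition \ref{rss} in turn. Condition (1) --- irreducibility, non-singularity, and rational connectedness of the general fiber of $ev_1:\mbar_{0,1}(X,1)\to X$ --- is addressed by Theorem \ref{irredlines} and generic smoothness for the first two properties. For rational connectedness, Lemma \ref{zerosetofvectorbundle} and the subsequent splitting remark identify $F_p(X)$ with the zero locus in $F_p(OG)\cong \proj{}^{k-1}\times H$ of a section of $\mc{O}(1,1)\oplus \mc{O}(2,2)\oplus\cdots\oplus \mc{O}(d,d)$. Under Assumption \ref{rsshyp}, an adjunction computation shows this is a smooth Fano complete intersection in a product of a projective space and a smooth quadric, hence rationally connected by standard results. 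Condition (3) is the content of Section \ref{sectiontwistable}: one constructs a family $(\pi:\Sigma\cong\pone\times\pone\to\pone,\sigma,h)$ of unobstructed lines satisfying the positivity conditions of Definition \ref{twistabledef}, building on the abundance of free lines through a general point of $X$ guaranteed by Theorem \ref{irredlines}.

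The substantive content is condition (2), which I approach via chains of lines and then induction on $e$. For the base case $e=k+1$, Lemma \ref{uniqueirredcomp} produces a canonical irreducible component $M_{2,k+1}\subset \mbar_{0,2}(X,k+1)$ containing the space $\mbar(X,\tau)$ of $2$-pointed $(k+1)$-chains from Definition \ref{stablechainsdefn}. Over a general pair $(p,q)\in X\times X$, chains are constructed inductively: having chosen $x_0=p,x_1,\ldots,x_{i-1}$, the $i$-th line is chosen from the fiber of $ev_1$ over $x_{i-1}$, which is rationally connected by condition (1). Thus the space of $(k+1)$-chains with initial point $p$ is rationally connected, and the terminal incidence condition $x_{k+1}=q$ cuts out a subvariety of codimension $\dim X$ whose rational connectedness over a general $(p,q)$ will follow from the dimension slack provided by Assumption \ref{rsshyp}.

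Once the chain-evaluation has rationally connected fibers, I deform to smooth curves. By the very twistable curve of condition (3) and Lemma \ref{deformtwistable}, a general chain is contained in a twisting surface $\pone\times\pone$, inside which it admits a smoothing to a degree-$(k+1)$ map from $\pone$ in $M_{2,k+1}$. Rational connectedness of chain-fibers over $(p,q)$ then transfers along this smoothing to the general fiber of $ev_{2,k+1}\vert_{M_{2,k+1}}$. For $e>k+1$, I induct using Corollary \ref{addtwistables} and Lemma \ref{uniontwistable}: gluing a twisting surface carrying a single additional line onto a general curve from $M_{2,e-1}$ produces $M_{2,e}$ with the required properties, the twisting condition of Definition \ref{twistingsurfacedefn} ensuring that the gluing preserves rational connectedness of fibers.

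The main obstacle will be the base-case chain analysis: verifying that after imposing the terminal incidence condition, the iterated fiber of the chain-evaluation over a general $(p,q)$ remains rationally connected (and not merely non-empty). The inequality $n+1-8k-4\ge 3kd^2-d^2-d$ in Assumption \ref{rsshyp} is calibrated precisely so that each fiber of $ev_1$ along the chain carries enough dimension and positivity for the final cut to behave like a Fano subvariety of a rationally connected one. Careful bookkeeping of the iterated fiber-bundle structure, combined with avoiding the jumping-fiber loci identified in Lemma \ref{jumpdimension}, will constitute the main technical work.
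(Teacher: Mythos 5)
Your overall skeleton matches the paper --- condition (1) from Theorem \ref{irredlines} plus the Fano complete-intersection description of $F_p(X)$, condition (3) from Section \ref{sectiontwistable}, and condition (2) by a base case at degree $k+1$ via chains of lines followed by an induction using twisting surfaces (the paper's Corollary \ref{allkindsofcurves} together with Lemma \ref{inductionstatement}). But the two steps that carry the real weight of the base case are gaps as you have stated them. First, your plan for the chain fiber over a general pair $(p,q)$ --- ``the space of $(k+1)$-chains anchored at $p$ is rationally connected, the terminal condition $x_{k+1}=q$ has codimension $\dim X$, and rational connectedness of the cut follows from dimension slack'' --- is not a valid argument: rational connectedness is not inherited by subvarieties cut out by incidence conditions, nor are fibers of a dominant map from a rationally connected variety rationally connected, and no amount of dimension counting supplies this. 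The paper's actual mechanism (Lemma \ref{chainsinog} and Proposition \ref{fibratiobyfanos}) is an explicit identification: over a general pair in $OG\times OG$ the two-pointed chain space is $Fl(1,\ldots,k-1;W_1)\times H$, and imposing the hypersurface cuts each $\{F\}\times H$ by $(k+1)d-1$ sections of total degree $kd^2< n-2k-1$, so the fibers of the projection to the flag variety are (possibly non-complete-intersection, whence Lemma \ref{noncifano}) smooth Fano subvarieties of the quadric $H$; rational connectedness of $M^{\tau}_{X,x}$ then follows from Graber--Harris--Starr. This is precisely where Assumption \ref{rsshyp} does its work for the base case, and it is absent from your proposal.

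Second, the passage from chains to irreducible degree-$(k+1)$ curves cannot be done by ``transferring rational connectedness along a smoothing'': the chain fiber $M^{\tau}_{X,x}$ is merely a boundary subvariety of the fiber of $ev_{2,k+1}\vert_{M_{2,k+1}}$, and rational connectedness of a subvariety says nothing about the ambient fiber, nor does rational connectedness propagate from special to general fibers without further input. The paper instead works relatively over a rational curve $h:\pone\to X\times X$ through a general point: it computes the normal bundle $\mc{N}_i$ of $\mbar(X,\tau)$ inside $\mbar_{0,2}(X,k+1)$ along explicit families of chains (Lemmas \ref{descnormalbundle} and \ref{multsofnormalbundle}), produces a section of the chain evaluation over $h$ with $T_{ev}$ ample and $\mc{N}_i$ ample (Lemma \ref{primarysection}, Proposition \ref{ampletevonfibers}), hence an $ev_2$-relatively very free section, and then applies Lemma \ref{relativelyveryfreefibers} (the MRC-quotient argument) to conclude that the geometric generic fiber of $ev_2\vert_{M_{2,k+1}}$ is rationally connected. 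Without some substitute for this relatively-very-free-section step and for the normal bundle computation, your base case does not close; the induction for $e>k+1$, by contrast, is essentially the paper's (Lemma \ref{inductionstatement} fed by the supply of twistable curves from Corollary \ref{allkindsofcurves}).
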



\section{Twistable Curves}\label{sectiontwistable}

In this section we will discuss the existence of particular twistable curves in a hypersurface $X$ in $OG$. We will assume we are in the situation of Assumption \ref{rsshyp} but some of the statements will be more generally true.

\subsection{Twisting Families in the Orthogonal Grassmannian}
Let $s:\proj{}^1\rightarrow \mbar_{0,1}(OG)$ be a family of lines in $OG$ defined by $(\pi:\Sigma \rightarrow \proj{}^1, h:\Sigma \rightarrow X, \sigma: \proj{}^1\rightarrow \Sigma)$. Using the universal property, we get three isotropic subbundles of $V\otimes \mc{O}_{\proj{}^1}$: $S_{k-1}\subset S_k \subset S_{k+1}$ of rank $(k-1),k$ and $(k+1)$ such that $\Sigma=\proj{S_{k+1}/S_{k-1}}$ and $\sigma$ is given by $S_k/S_{k-1}\subset S_{k+1}/S_{k-1}$. Then:
\begin{prop}\cite[Corollary ~3.4]{twistingfamilies} Using the above notation:
	\begin{enumerate}
		\item $s^*T_{ev}\cong ((S_{k+1}/S_k)^*\otimes (S_{k+1}^{\perp}/S_{k+1}))\oplus ((S_k/S_{k-1})\otimes S_{k-1}^*)$
		\item $N_{\sigma}\cong (S_{k+1}/S_k)\otimes (S_k/S_{k-1})^*$
	\end{enumerate}
\end{prop}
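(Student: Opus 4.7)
The plan is to compute the two bundles pointwise on $\proj{}^1$ using the universal description of $\mbar_{0,1}(OG)$ and then globalize via the universal bundles $S_{k-1}\subset S_k\subset S_{k+1}$.

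First I would recall from Subsection \ref{desclines} that $\mbar_{0,1}(OG,1)$ is identified with the incidence locus in $OG(k-1,n+1)\times OG(k,n+1)\times OG(k+1,n+1)$ where the tautological flags satisfy $\mc{S}_{k-1}\subset\mc{S}_k\subset\mc{S}_{k+1}$, with $ev$ given by projection to the middle factor. Hence the fiber of $ev$ over a point $[W]\in OG$ is $F_W(OG)\cong \proj{W^*}\times H_W$. For part (1), I would separately compute the tangent space of each factor at a point $(W_{k-1},W_{k+1})$. The tangent to $\proj{W^*}\cong Gr(k-1,W)$ at $W_{k-1}$ is the familiar $\mathrm{Hom}(W_{k-1},W/W_{k-1})$. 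For the quadric factor, $H_W\subset \proj{W^\perp/W}$ is cut out by $\bar Q$, so the tangent space at $[v]$, with $v=W_{k+1}/W$, is the subspace of $\mathrm{Hom}(v,(W^\perp/W)/v)$ lifting to $\mathrm{Hom}(v,v^{\bar\perp}/v)$; since $W_{k+1}$ is isotropic in $V$ one has $v^{\bar\perp}=W_{k+1}^\perp/W$, giving tangent space $\mathrm{Hom}(W_{k+1}/W,W_{k+1}^\perp/W_{k+1})$. Summing the two factors and replacing $W_{\bullet}$ by the pulled-back universal bundles $S_{\bullet}$ yields the stated decomposition of $s^*T_{ev}$.

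For part (2), I would use that $\Sigma=\proj{S_{k+1}/S_{k-1}}$ is the projectivization of the rank-$2$ bundle $E:=S_{k+1}/S_{k-1}$ and that the section $\sigma$ corresponds to the line subbundle $L:=S_k/S_{k-1}\hookrightarrow E$, with quotient $E/L\cong S_{k+1}/S_k$. The standard identification of the normal bundle of such a section with $\mathcal{H}om(L,E/L)=L^*\otimes(E/L)$ gives $N_\sigma\cong (S_k/S_{k-1})^*\otimes(S_{k+1}/S_k)$, which is exactly the claimed formula.

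The main obstacle in making this rigorous is part (1): the quadric tangent space computation must be carried out coherently in families so that the orthogonal duality $W^\perp$ globalizes (using the non-degenerate form $Q$ and the isotropy of $S_{k+1}$), and one must check that the decomposition of $T_{F_p(OG)}$ into a $\mathrm{Hom}(S_{k-1},S_k/S_{k-1})$-summand and a $\mathrm{Hom}(S_{k+1}/S_k,S_{k+1}^\perp/S_{k+1})$-summand is not just pointwise but really splits the pullback bundle, which follows because the two factors come from the product decomposition $\proj{W^*}\times H_W$ and $s$ lifts canonically to both factors via the flag of subbundles on $\proj{}^1$.
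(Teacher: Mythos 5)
Your computation is correct: part (2) is the standard identification $N_\sigma\cong \mathcal{H}om(L,E/L)$ for the section of $\proj{E}$ defined by a line subbundle $L\subset E$, and in part (1) the two summands are exactly the relative tangent bundles of the two factors, with the quadric computation $v^{\bar\perp}=W_{k+1}^{\perp}/W$ (using $W\subset W_{k+1}\subset W_{k+1}^{\perp}\subset W^{\perp}$) giving $\mathrm{Hom}(W_{k+1}/W,\,W_{k+1}^{\perp}/W_{k+1})$. Note that the paper gives no proof of this statement at all—it is imported from \cite{twistingfamilies}, Corollary 3.4—so there is no internal argument to compare with; your derivation is a valid self-contained substitute, and the globalization issue you flag is indeed the only point needing care, handled because $F_{0,1}(OG)\to OG$ is globally the fiber product over $OG$ of $\proj{\mc{S}_k^{*}}$ and the relative quadric in $\proj{\mc{S}_k^{\perp}/\mc{S}_k}$, so $T_{ev}$ splits as a direct sum before pulling back along $s$.
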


\begin{rem}
	All rational curves in a projective homogenous space are unobstructed. Thus we only need to worry about the positivity of the above two bundles.
\end{rem}

The following types of twistable curves exist in $OG(k,n+1)$:
\begin{prop}\label{myfamilies}
	Under Assumption \ref{rsshyp}, there exists the following curves in $OG$:
	\begin{enumerate}
		\item twistable curves $[\zeta_{i}]\in \mbar_{0,0}(OG,e_i)$ for $i=0,\ldots,k-1$ where $e_i=k+i$.
		\item very twistable curve $[\zeta]\in \mbar_{0,0}(OG,e)$ where $e=3k-1$
	\end{enumerate}
\end{prop}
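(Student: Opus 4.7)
The strategy is to reduce each statement to the construction of an explicit isotropic three-step flag of subbundles $S_{k-1}\subset S_k\subset S_{k+1}$ of $V\otimes\mathcal{O}_{\mathbb{P}^1}$ with prescribed Birkhoff--Grothendieck splittings. Applying the proposition above (Corollary 3.4 of \cite{twistingfamilies}), the conditions required in Definition~\ref{twistabledef} translate into: (i) global generation (resp.\ ampleness) of $(S_k/S_{k-1})\otimes S_{k-1}^*$; (ii) global generation (resp.\ ampleness) of $(S_{k+1}/S_k)^*\otimes(S_{k+1}^\perp/S_{k+1})$; and (iii) non-negative degree of $(S_{k+1}/S_k)\otimes(S_k/S_{k-1})^*$. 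Freeness of the underlying curve in $OG$ and unobstructedness of the lines are automatic since $OG$ is homogeneous.

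The key simplification is to take $S_{k+1}$ to be a \emph{constant} isotropic subbundle $U\otimes\mathcal{O}_{\mathbb{P}^1}$ for a fixed $(k+1)$-dimensional isotropic $U\subset V$, which exists under Assumption~\ref{rsshyp} because the Witt index of $Q$ is much larger than $k$. With this choice $S_{k+1}^\perp/S_{k+1}\cong(U^\perp/U)\otimes\mathcal{O}_{\mathbb{P}^1}$ is the trivial bundle of rank $n-2k-1$, so condition (ii) collapses to the requirement that $(S_{k+1}/S_k)^*$ have non-negative (resp.\ strictly positive) degree. The curve $\zeta$ is then the image of the map $\mathbb{P}^1\to OG$ induced by $S_k\subset U\otimes\mathcal{O}$, whose degree in $OG$ equals $\deg\det S_k^*$.

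For the twistable family of degree $e_i=k+i$, $i=0,\ldots,k-1$, I would take $S_k$ to be the kernel of a generic surjection $\mathcal{O}_{\mathbb{P}^1}^{\,k+1}\twoheadrightarrow\mathcal{O}(k+i)$; for generic sections the kernel attains the balanced splitting $S_k\cong\mathcal{O}(-1)^{k-i}\oplus\mathcal{O}(-2)^i$, and inside it I would take $S_{k-1}$ to be the summand $\mathcal{O}(-1)^{k-i-1}\oplus\mathcal{O}(-2)^i$ (and $S_{k-1}=\mathcal{O}(-1)^{k-1}$ when $i=0$). Then $S_k/S_{k-1}=\mathcal{O}(-1)$ and $S_{k-1}^*=\mathcal{O}(1)^{k-i-1}\oplus\mathcal{O}(2)^i$, giving $(S_k/S_{k-1})\otimes S_{k-1}^*=\mathcal{O}(0)^{k-i-1}\oplus\mathcal{O}(1)^i$ globally generated, $(S_{k+1}/S_k)^*=\mathcal{O}(k+i)$ of positive degree, and $N_\sigma=\mathcal{O}(k+i+1)$ of positive degree, so conditions (i)--(iii) hold. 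For the very twistable curve of degree $3k-1$, I would take $S_k$ to be the kernel of a generic surjection $\mathcal{O}_{\mathbb{P}^1}^{\,k+1}\twoheadrightarrow\mathcal{O}(3k-1)$, whose generic balanced splitting is $S_k\cong\mathcal{O}(-3)^{k-1}\oplus\mathcal{O}(-2)$, and let $S_{k-1}$ be the summand $\mathcal{O}(-3)^{k-1}$. Then $(S_k/S_{k-1})\otimes S_{k-1}^*=\mathcal{O}(-2)\otimes\mathcal{O}(3)^{k-1}=\mathcal{O}(1)^{k-1}$ is ample, $(S_{k+1}/S_k)^*\otimes(S_{k+1}^\perp/S_{k+1})=\mathcal{O}(3k-1)^{n-2k-1}$ is ample for $k\ge 1$, and $N_\sigma=\mathcal{O}(3k-1)\otimes\mathcal{O}(2)=\mathcal{O}(3k+1)$ has positive degree.

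The only step requiring a non-trivial verification is that a generic surjection $\mathcal{O}_{\mathbb{P}^1}^{\,k+1}\twoheadrightarrow\mathcal{O}(d)$ has kernel of the claimed balanced splitting; this is the main technical point, and is a standard dimension count on the space of $(k+1)$-tuples of sections of $\mathcal{O}(d)$ with no common zero, where the locus of surjections with unbalanced kernel is cut out by determinantal conditions and has strictly positive codimension. Since $U$ is totally isotropic, every rank-$k$ subbundle of $U\otimes\mathcal{O}$ is automatically isotropic, so the isotropy requirement imposes no further constraint on the choice.
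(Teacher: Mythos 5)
There is a genuine error at the heart of your construction. With your ``key simplification'' $S_{k+1}=U\otimes\mc{O}_{\pone}$ constant and $S_k=\ker(\mc{O}_{\pone}^{\,k+1}\twoheadrightarrow\mc{O}(e))$, the quotient is $S_{k+1}/S_k\cong\mc{O}(e)$ (degree $\deg S_{k+1}-\deg S_k=0+e$), so its dual is $\mc{O}(-e)$, not $\mc{O}(e)$ as you assert. Consequently the first summand of $s^*T_{ev}$ is $(S_{k+1}/S_k)^*\otimes(S_{k+1}^{\perp}/S_{k+1})\cong\mc{O}(-e)^{\oplus(n-2k-1)}$ with $e=k+i$ (resp.\ $3k-1$), which is very negative; the families you write down are therefore neither twisting nor very twisting, and conditions (i)--(iii) as you state them are not met once the dualization is done correctly. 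This is not a repairable slip within your strategy: whenever $S_{k+1}$ is a constant bundle and the induced curve has positive degree $e=\deg\det S_k^*$, the line bundle $(S_{k+1}/S_k)^*$ automatically has degree $-e<0$, so no choice of $S_{k-1}\subset S_k\subset U\otimes\mc{O}_{\pone}$ can make $s^*T_{ev}$ globally generated. Geometrically, your family of pointed lines lives entirely inside the fixed $\proj{}^k$ of $k$-planes in $U$, and the directions in which the $(k+1)$-plane would move are exactly the ones on which the pulled-back relative tangent bundle becomes negative.

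The paper's construction avoids this by making $S_{k+1}$ non-constant, with $S_{k+1}=S_k\oplus\mc{O}(-1)$ (resp.\ $S_k\oplus\mc{O}(-2)$), so that $(S_{k+1}/S_k)^*\cong\mc{O}(1)$ (resp.\ $\mc{O}(2)$) is positive; the genuinely non-trivial point --- which your simplification was designed to sidestep --- is then to control $S_{k+1}^{\perp}/S_{k+1}$, which is no longer trivial. This is done by choosing the isotropic embedding $S_{k+1}\hookrightarrow V\otimes\mc{O}_{\pone}$ to factor through $W\otimes\mc{O}_{\pone}$ for a maximal isotropic $W$ (guaranteeing isotropy) and so that $V^*\to H^0(\pone,S_{k+1}^*)$ is surjective; this forces every summand of $S_{k+1}^{\perp}/S_{k+1}$ to be at least $\mc{O}(-1)$, whence $(S_{k+1}/S_k)^*\otimes(S_{k+1}^{\perp}/S_{k+1})$ is globally generated (resp.\ ample), and $N_{\sigma}\cong\mc{O}_{\pone}$ has degree $0\ge 0$. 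It is exactly this surjectivity-on-global-sections step that uses the largeness of $n$ in Assumption \ref{rsshyp}; in your proposal the assumption plays no real role, which is itself a sign that the needed positivity has been lost. Your remarks on balanced kernels of generic surjections and on isotropy being automatic inside a totally isotropic $U$ are correct but moot given the failure above.
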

\begin{proof}
	The corresponding families are given by the following collection of vector bundles.
	\begin{enumerate}
		\item For $i=0,\ldots,k-1$ take: $$S_{k-1}=\mc{O}(-1)^{k-1-i}\oplus\mc{O}(-2)^{i}; S_k=S_{k-1}\oplus \mc{O}(-1); S_{k+1}=S_k\oplus \mc{O}(-1)$$
		\item $S_{k-1}=\mc{O}(-3)^{k-1}; S_k=S_{k-1}\oplus \mc{O}(-2); S_{k+1}=S_k\oplus \mc{O}(-2)$

	\end{enumerate}
	In all the above cases, we choose the bundles such that the map $V^*\otimes\mc{O}_{\pone}\rightarrow S_{k+1}^*$ is surjective on global sections. This can be done, for example, by choosing a maximal isotropic subspace $W\subset V$ and taking $S_{k+1}^*$ to be a quotient of $W^*\otimes\mc{O}_{\pone}$ where the quotient map is surjective on global sections. It is easy to see that this is possible under Assumption \ref{rsshyp}. Hence $S_{k+1}^{\perp}/S_{k+1}$ has $\mc{O}(-1)$ as the most negative summand. This gives us the required positivity of $s^*T_{ev}$ in each of the cases. Computing the degrees of $S_k$ for every family, we get: $e_i=k+i$ and $e=3k-1$.
\end{proof}

\subsection{Twisting Families in Hypersurfaces}
Let $X \subset OG$ denote a general degree $d$, Fano hypersurface. In this section we will show that the twistable curves in Proposition \ref{myfamilies} are also twistable in $X$. First we need an openness condition.
\begin{lem}
	Suppose $[\epsilon]\in \mbar_{0,0}(X_0,e)$ is an irreducible twistable (resp. very twistable) curve in a degree $d$ hypersurface $X_0$ in $OG$ with associated twisting (resp. very twisting) family $\zeta:\proj{}^1\rightarrow \mbar_{0,1}(X_0,1)$. Then we can also find a twistable (resp. very twistable) curve of degree $e$ in a general degree $d$ hypersuface $X\subset OG$.
\end{lem}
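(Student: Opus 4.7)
The plan is to interpret twistability as an open condition on a relative moduli space over $\mb{P}(V_d)$ and then verify that the structural morphism to $\mb{P}(V_d)$ is smooth at the hypothesized point. Let $\mc{X}\to\mb{P}(V_d)$ denote the universal degree $d$ hypersurface in $OG$. Construct the scheme $Z$ over $\mb{P}(V_d)$ whose points parametrize triples $(X,[\epsilon],\zeta)$ with $[\epsilon]\in\mbar_{0,0}(X,e)$ having underlying map $a_\epsilon:\pone\to X$ and $\zeta:\pone\to\mbar_{0,1}(X,1)$ a morphism satisfying $ev\circ\zeta=a_\epsilon$, together with the associated family $(\pi:\Sigma\to\pone,\sigma,h)$. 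The hypothesis supplies a distinguished point $z_0\in Z$ lying over $X_0$.

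Each of the five conditions in Definition \ref{twistabledef} cuts out an open locus in $Z$: freeness and (relative) (very) freeness are open by semicontinuity of $h^1$ applied to pullbacks of the relevant tangent bundles; unobstructedness of the lines along $\zeta(\pone)$ is open; and the degree condition $\deg\mc{N}_{\sigma(\pone)/\Sigma}\ge 0$ is open since the degree is locally constant in flat families. Let $U\subset Z$ be the twistable (resp.\ very twistable) locus; then $z_0\in U$.

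It suffices to show the projection $p:Z\to\mb{P}(V_d)$ is smooth at $z_0$, for then $p(U)$ contains a Zariski open neighborhood of $X_0$ and hence is dense in $\mb{P}(V_d)$. A first-order deformation $\dot X$ of $X_0$ lifts to a deformation of $z_0$ once the compatible obstructions vanish: the obstruction to extending $[\epsilon]$ to the deformed hypersurface lies in $H^1(\pone,a_\epsilon^*T_{X_0})$, which vanishes by freeness; the obstruction to extending $\zeta$ lies in $H^1(\pone,\zeta^*T_{ev})$, which vanishes by $ev$-relative freeness; and each line of $\zeta(\pone)$ extends by the unobstructedness hypothesis. Together these give surjectivity of the tangent map $T_{z_0}Z\to T_{X_0}\mb{P}(V_d)$, completing the argument. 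The main obstacle is unifying the deformation theories of $X$, of $[\epsilon]$, and of $\zeta$ into a single coherent obstruction complex over $\pone$; once this is arranged, each relevant obstruction reduces to an $H^1$ on $\pone$ killed directly by the twistability assumptions on $z_0$.
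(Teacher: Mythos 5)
Your overall strategy -- relativize over $\proj{V_d}$, note that the conditions of Definition \ref{twistabledef} are open, and use the freeness hypotheses to spread the twisting family to nearby hypersurfaces -- is the same one the paper uses (the paper works with an irreducible component of $Mor(\pone,\mbar_{0,1}(\mc{X},1))$, so the datum $[\epsilon]=ev\circ\zeta$ is never carried separately, and it uses the line bundles $\mc{L}_1,\mc{L}_2,\mc{L}_3$ to see that deformations stay in a single fiber over $\proj{V_d}$, keep degree $e$, and keep $\deg\mc{N}_{\sigma}\ge 0$). However, your write-up has a genuine gap at the decisive step. You reduce the whole argument to ``surjectivity of the tangent map $T_{z_0}Z\to T_{[X_0]}\proj{V_d}$'' and declare this completes the proof that $p$ is smooth at $z_0$. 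Surjectivity of the differential at a single, possibly singular, point of $Z$ gives neither smoothness of $p$ at $z_0$ nor that $p(Z)$ contains a neighborhood of $[X_0]$: already for $Z=V(y^2)\subset\mb{A}^2$ projecting to the $y$-axis the differential at the origin is surjective while the image is a point. What you actually need is formal smoothness (lifting over all Artinian thickenings), and the place where this must be established is precisely the step you flag and then postpone: ``unifying the deformation theories of $X$, $[\epsilon]$ and $\zeta$ into a single obstruction complex.'' That unification is the real content of the lemma, not a technicality to be assumed.

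The repair is within reach of your own vanishing statements. Drop $[\epsilon]$ from the moduli problem (it is determined by $\zeta$) and factor $p$ as $Z\rightarrow Mor(\pone,\mc{X})\rightarrow \proj{V_d}$, where the first map is composition with the relative evaluation $ev':\mbar_{0,1}(\mc{X},1)\rightarrow\mc{X}$ and the second uses that a degree-zero map $\pone\rightarrow\proj{V_d}$ is constant. The second map is smooth at $[a_\epsilon]$ because $a_\epsilon(\pone)$ lies in the smooth locus of $X_0$ and $H^1(\pone,a_\epsilon^*T_{X_0})=0$ (freeness); the first is smooth at $[\zeta]$ because $\zeta(\pone)$ lies in the smooth locus of $ev'$ (this is where the unobstructedness condition (3) enters -- you invoke it but never tie it to smoothness of $ev'$) and $H^1(\pone,\zeta^*T_{ev})=0$ ($ev$-relative freeness). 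This gives honest smoothness of $p$ at $z_0$, and together with openness of the twistable locus and local constancy of $\deg(ev\circ\zeta')^*\mc{O}_{OG}(1)=e$ (a point you omit) your conclusion follows. The paper's variant of the same idea avoids the smoothness discussion altogether: $\zeta^*T_{\mbar_{0,1}(\mc{X},1)}$ is globally generated, being an extension of $a_\epsilon^*T_{\mc{X}}$ (itself an extension of a trivial bundle by the globally generated $a_\epsilon^*T_{X_0}$) by the globally generated $\zeta^*T_{ev}$, so $\zeta$ deforms to pass through a general point of $\mbar_{0,1}(\mc{X},1)$, and the condition $\deg\zeta'^*\mc{L}_2=0$ forces each deformation into a single fiber $\mbar_{0,1}(X,1)$.
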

\begin{proof}
	Let $V_d=\Gamma(OG,\mc{O}(d))$ and let $\pi:\mc{X}\rightarrow \proj{V_d}$ denote the universal family of hypersurfaces. Denote by $\mbar_{0,1}(\mc{X},1)$ the space parameterzing $(p\in l\subset X)$ where $p$ is a point on a line $l$ contained in a hypersurface $\mc{X}$. There is an evaluation map $ev:\mbar_{0,1}(\mc{X},1)\rightarrow \mc{X}$. We know that a general fiber of $ev$ is non-singular and rationally connected. By assumption, the $\zeta(\proj{}^1)$ lies in the smooth locus of $ev$. There is also a forgetful map $\phi:\mbar_{0,1}(\mc{X},1)\rightarrow \mbar_{0,1}(OG,1)$. Let $(p_0:\mc{C} \rightarrow \mbar_{0,1}(OG,1), \sigma:\mbar_{0,1}(OG,1) \rightarrow \mc{C})$ be the universal family and section. Then the normal bundle of $\sigma$, $\mc{N}_{\sigma}$ is a line bundle on $\mbar_{0,1}(OG,1)$. On $\mbar_{0,1}(\mc{X},1)$, consider the following lines bundles:
	$$\mc{L}_1=\phi^*\mc{N}_{\sigma},\qquad \mc{L}_2=ev^*\pi^*\mc{O}_{\proj{V_d}}(1)\qquad
	\mc{L}_3=\phi^*ev^*\mc{O}_{OG}(1)$$
	Then, there is an irreducible component $M$ of $Mor(\proj{}^1,\mbar_{0,1}(\mc{X},1))$ containing $[\zeta]$ such that for a general $[\zeta^{\prime}]\in M$, $\zeta^{\prime}$ has irreducible domain, $\deg (\zeta^{\prime *} \mc{L}_1)\ge 0$, $\deg (\zeta^{\prime *}\mc{L}_2)=0$ and $\deg (\zeta^{\prime *} \mc{L}_3) = e$. Thus $\zeta^{\prime}(\proj{}^1)$ lies in a fiber of $ev\circ \pi$. Moreover, since $\zeta^*T_{ev}$ is globally generated, $\zeta$ may be deformed to contain a general point of $\mbar_{0,1}(\mc{X},1)$. Therefore, a general such deformation $\zeta^{\prime}$ lies in $\mbar_{0,1}(X,1)=(ev\circ \pi)^{-1}([X])$ for a general hypersurface $X$ and is twistable. The proof for very twistable is similar.
\end{proof}
Thus it is enough to find one hypersurface with a twistable (or very twistable) curve of each degree. Let $(\pi:\Sigma\rightarrow \proj{}^1,\sigma,h)$ be a family of $1$-pointed lines in $OG$. Let $X$ be a degree $d$ hypersurface containing $h(\Sigma)$. For $Y=X$ and $Y=OG$, let $ev_Y:\mbar_{0,1}(Y,1)\rightarrow Y$ denote the evaluation map. Furthermore, denote by $N_{\Sigma/Y\times \proj{}^1}$, the normal bundle of the product map $(h,\pi):\Sigma \rightarrow Y\times \proj{}^1$. Let $s:\proj{}^1\rightarrow \mbar_{0,1}(X,1)\subset \mbar_{0,1}(OG,1)$ denote the map corresponding to the above family. Then we have:
\begin{lem}\cite[Proposition ~5.7]{deland}\label{descoftev}
	For each choice of $Y$, assuming the lines parametrized by $s$ are unobstructed in $Y$, there is a natural isomorphism:
	$$s^*T_{ev_Y} \cong \pi_*N_{\Sigma/Y\times \proj{}^1}(-\sigma(\proj{}^1))$$
\end{lem}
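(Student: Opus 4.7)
The plan is to compute both sides fiberwise on $\pone$ and then promote the agreement to a canonical isomorphism via a universal-family construction. For each $b \in \pone$, I will set $f_b := h|_{F_b} : F_b \to Y$ and $p_b := \sigma(b) \in F_b$, so that $s(b) = [f_b, p_b] \in \mbar_{0,1}(Y,1)$. Unobstructedness of the line $f_b$ means $N_{f_b}$ splits on $F_b \cong \pone$ as a direct sum of line bundles of non-negative degree, yielding both $H^1(F_b, T_{F_b}(-p_b)) = 0$ and $H^1(F_b, N_{f_b}(-p_b)) = 0$. Standard deformation theory identifies the tangent space to $ev_Y$ at $[f_b, p_b]$ with first-order deformations of the pointed map fixing $f_b(p_b)$, modulo infinitesimal reparametrizations of $(F_b, p_b)$; this is the cokernel of $H^0(F_b, T_{F_b}(-p_b)) \to H^0(F_b, f_b^*T_Y(-p_b))$, which by the long exact sequence of $0 \to T_{F_b}(-p_b) \to f_b^*T_Y(-p_b) \to N_{f_b}(-p_b) \to 0$ together with the $H^1$ vanishings equals $H^0(F_b, N_{f_b}(-p_b))$.

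\textbf{The push-forward side.} Next, I would restrict the normal sequence
\[
0 \to T_\Sigma \to h^*T_Y \oplus \pi^*T_{\pone} \to N_{\Sigma/Y\times\pone} \to 0
\]
to a fiber $F_b$. The subbundle $T_{F_b} \subset T_\Sigma|_{F_b}$ maps injectively into $h^*T_Y|_{F_b}$ via $df_b$, while the fiber-normal quotient $\pi^*T_{\pone}|_{F_b}$ matches the second summand identically, so $N_{\Sigma/Y\times\pone}|_{F_b} \cong N_{f_b}$. Since $\sigma$ meets each fiber $F_b$ transversely in the reduced point $p_b$, twisting gives $N_{\Sigma/Y\times\pone}(-\sigma(\pone))|_{F_b} \cong N_{f_b}(-p_b)$. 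The fiberwise $H^1$ vanishing together with cohomology and base change then forces $\pi_* N_{\Sigma/Y\times\pone}(-\sigma(\pone))$ to be a vector bundle on $\pone$ whose fiber at $b$ is $H^0(F_b, N_{f_b}(-p_b))$, which agrees with the fiber of $s^*T_{ev_Y}$ computed above.

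\textbf{Globalization and main obstacle.} To turn fiberwise agreement into a canonical morphism of vector bundles, I would run the same construction on the universal family $p_{\mc{M}} : \mc{C} \to \mc{M} := \mbar_{0,1}(Y,1)$ equipped with universal section $\tilde\sigma$ and universal evaluation $F_{\mc{M}} : \mc{C} \to Y$. The relative tangent-normal sequence of $(F_{\mc{M}}, p_{\mc{M}}) : \mc{C} \to Y \times \mc{M}$, twisted by $-\tilde\sigma$ and pushed forward along $p_{\mc{M}}$, should produce a natural comparison morphism $T_{ev_Y} \to p_{\mc{M},*} N_{\mc{C}/Y\times \mc{M}}(-\tilde\sigma)$ on $\mc{M}$; pulling back by $s$ and invoking flat base change (valid by the fiberwise $R^1$ vanishing above) should deliver the asserted isomorphism. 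The main obstacle is this universal step: one must verify that the Kodaira-Spencer map on $\mc{M}$, the evaluation differential $d(ev_Y)$, and the connecting map coming from the normal sequence on $\mc{C}$ are mutually compatible. This reduces to careful bookkeeping with the deformation functor of pointed stable maps, and constitutes the substantive content beyond the fiberwise check.
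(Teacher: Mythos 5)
The paper does not prove this lemma at all: it is imported verbatim as \cite[Proposition 5.7]{deland}, so there is no in-paper argument to compare against. Your outline is essentially the standard proof of that cited result: identify the fiber of $T_{ev_Y}$ at $s(b)$ with $H^0(F_b,N_{f_b}(-p_b))$ via the pointed deformation theory of the line, identify $N_{\Sigma/Y\times\pone}(-\sigma(\pone))|_{F_b}\cong N_{f_b}(-p_b)$ from the normal sequence of $(h,\pi)$, and use cohomology and base change plus the comparison morphism coming from the universal family over $\mbar_{0,1}(Y,1)$ (on the locus of embedded lines this is a fine moduli problem, so the universal construction you sketch is available). Two points deserve care. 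First, your step requiring $H^1(F_b,N_{f_b}(-p_b))=0$ needs ``unobstructed'' to be read in the pointed sense, i.e. $H^1(F,N_{F/Y}(-1))=0$, not merely $H^1(F,N_{F/Y})=0$; this is in fact how the paper uses the word (cf.\ the conclusion of Lemma \ref{enoughtosurject}, and it is automatic for $Y=OG$), and without it both the base-change argument and the smoothness of $ev_Y$ along $s(\pone)$ (needed for $T_{ev_Y}$ to be the locally free relative tangent bundle you are computing) can fail. Second, the compatibility you defer --- that the evaluation differential, the Kodaira--Spencer identification, and the connecting map of the normal sequence on the universal curve agree --- is precisely the content of DeLand's proposition; your plan for it (push forward the twisted relative normal sequence on the universal family and pull back along $s$) is the right one and does go through, so the gap is one of execution rather than of approach.
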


On $\Sigma$, there is a short exact sequence of locally free sheaves:
$$0\rightarrow N_{\Sigma/X\times \proj{}^1}(-\sigma(\proj{}^1))\rightarrow N_{\Sigma/OG\times \proj{}^1}(-\sigma(\proj{}^1))\xrightarrow{d_X} h^*N_{X/OG} (-\sigma(\proj{}^1))\rightarrow 0$$
Where $d_X$ is the map induced by the Jacobian of $X$. Let $\mc{O}(a)$ be a line bundle on $\proj{}^1$. Let $L$ on $\Sigma$ denote the bundle $\mc{O}_{\Sigma}(-\sigma(\proj{}^1))\otimes \pi^*\mc{O}_{\proj{}^1}(-a)$. Tensor and take global sections to get the left exact sequence:
$$0\rightarrow H^0(\Sigma,N_{\Sigma/X\times \proj{}^1}\otimes L)\rightarrow H^0(\Sigma,N_{\Sigma/OG\times \proj{}^1}\otimes L)\xrightarrow{D^L_X} H^0(\Sigma,h^*N_{X/OG} \otimes L)$$
\begin{lem}\label{enoughtosurject}
	Assume $H^1(\pone, \pi_*(N_{\Sigma/OG\times \proj{}^1}\otimes L))=0$ and $\pi_*(h^*N_{X/OG}\otimes L)$ is globally generated. Furthermore, assume $D^L_X$ is surjective. Let $F\subset X$ be any line parametrized by $s(\pone)$. Then:
	\begin{enumerate}
		\item $H^1(\pone, \pi_*(N_{\Sigma/X\times \proj{}^1}\otimes L))=0$
		\item $H^1(F, N_{F/X}(-1))=0$ i.e. $F$ is unobstructed in $X$.
	\end{enumerate}
\end{lem}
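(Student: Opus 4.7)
The plan is to push the short exact sequence
$$0\to N_{\Sigma/X\times\pone}\otimes L\to N_{\Sigma/OG\times\pone}\otimes L\to h^*N_{X/OG}\otimes L\to 0$$
forward along $\pi$ and extract both conclusions from the resulting six-term exact sequence on $\pone$. The key local computation is the restriction of these sheaves to a fiber $F$ of $\pi$. Using the standard exact sequence $0\to N_{F/\Sigma}\to N_{F/Y\times\pone}|_F\to N_{\Sigma/Y\times\pone}|_F\to 0$ (for $Y=X$ or $OG$), together with $N_{F/\Sigma}\cong\mc{O}_F$ (since $F$ is a fiber of $\pi$), $N_{F/Y\times\pone}\cong N_{F/Y}\oplus\mc{O}_F$, and $L|_F\cong \mc{O}_F(-1)$ (because $\sigma$ meets $F$ transversely at one point), one obtains $(N_{\Sigma/Y\times\pone}\otimes L)|_F\cong N_{F/Y}(-1)$.

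First I would verify $R^1\pi_*(N_{\Sigma/OG\times\pone}\otimes L)=0$: fiberwise this is $H^1(F,N_{F/OG}(-1))$, and since $OG$ is homogeneous, $N_{F/OG}$ splits on $F\cong\pone$ as a direct sum of non-negative line bundles, so the twist by $-1$ has vanishing $H^1$. Next I would argue that the map of pushforwards $\pi_*(N_{\Sigma/OG\times\pone}\otimes L)\to\pi_*(h^*N_{X/OG}\otimes L)$ is surjective as a map of sheaves. This is where assumptions (ii) and (iii) combine: global generation of the target means a finite set of global sections generates every stalk, and the surjectivity of $D^L_X$ lifts each such section to the source, forcing stalkwise surjectivity. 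Combined with the previous vanishing, the pushforward long exact sequence collapses to
$$0\to\pi_*(N_{\Sigma/X\times\pone}\otimes L)\to\pi_*(N_{\Sigma/OG\times\pone}\otimes L)\to\pi_*(h^*N_{X/OG}\otimes L)\to 0$$
together with the vanishing $R^1\pi_*(N_{\Sigma/X\times\pone}\otimes L)=0$.

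From the latter I would derive (2) via cohomology and base change: since the fibers of $\pi$ are curves, $R^2\pi_*$ vanishes automatically, so the base-change map $(R^1\pi_*\mc{F})_t\otimes k(t)\to H^1(F_t,\mc{F}|_{F_t})$ is an isomorphism; specializing $\mc{F}=N_{\Sigma/X\times\pone}\otimes L$ and using the fiber identification above yields $H^1(F,N_{F/X}(-1))=0$ for every fiber, which is the unobstructedness statement. For (1), I would take cohomology of the short exact sequence of pushforwards on $\pone$: the connecting map $H^0(\pi_*(h^*N_{X/OG}\otimes L))\to H^1(\pi_*(N_{\Sigma/X\times\pone}\otimes L))$ vanishes because the preceding map is exactly $D^L_X$ on global sections, which is surjective by (iii); combined with hypothesis (i) this embeds $H^1(\pone,\pi_*(N_{\Sigma/X\times\pone}\otimes L))$ into $H^1(\pone,\pi_*(N_{\Sigma/OG\times\pone}\otimes L))=0$.

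The main subtlety is the sheaf-level promotion step: surjectivity of $D^L_X$ on global sections alone does not imply surjectivity of the pushforward map as a morphism of sheaves, and it is precisely the global generation hypothesis (ii) that bridges the gap. Once that point is handled, the remainder is a routine juggling of the Leray spectral sequence and cohomology and base change, made clean by the fact that everything lives over $\pone$ and the relevant fibers are $\pone$ as well.
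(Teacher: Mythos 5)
Your proposal is correct and follows essentially the same route as the paper: identify $(N_{\Sigma/Y\times\pone}\otimes L)|_F\cong N_{F/Y}(-1)$, use unobstructedness of lines in $OG$ to get $R^1\pi_*(N_{\Sigma/OG\times\pone}\otimes L)=0$, combine surjectivity of $D^L_X$ with global generation of $\pi_*(h^*N_{X/OG}\otimes L)$ to force $R^1\pi_*(N_{\Sigma/X\times\pone}\otimes L)=0$, then deduce (2) by cohomology and base change and (1) from the long exact sequence of the pushforward sequence on $\pone$. The only cosmetic difference is that you phrase the key step as sheaf-level surjectivity of $\pi_*(N_{\Sigma/OG\times\pone}\otimes L)\to\pi_*(h^*N_{X/OG}\otimes L)$, while the paper equivalently shows the connecting map $\psi$ to $R^1\pi_*(N_{\Sigma/X\times\pone}\otimes L)$ is surjective yet vanishes on the generating global sections.
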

\begin{proof} Observe that for $Y=X$ or $OG$, $N_{F/Y}(-1)=(N_{\Sigma/Y\times \proj{}^1}\otimes L)|_F$
	Since all lines in $OG$ are unobstructed, $H^1(F, N_{F/OG}(-1))=0$ for all fibers of $\pi$. Hence the derived push-forward, $R^1\pi_*(N_{\Sigma/OG\times \proj{}^1}\otimes L))=0$. Thus we have a surjective map of vector bundles:
	$$\psi:\pi_*(h^*N_{X/OG}\otimes L) \rightarrow R^1\pi_*(N_{\Sigma/X\times \proj{}^1}\otimes L)$$
	However, since the map on global sections $H^0(\pone,\pi_*(N_{\Sigma/OG\times \proj{}^1}\otimes L))\xrightarrow{\pi_*D^L_X} H^0(\pone,\pi_*(h^*N_{X/OG} \otimes L))$ is surjective, $\psi$ vanishes on global sections. Since $\pi_*(h^*N_{X/OG}\otimes L)$ is globally generated, we have $R^1\pi_*(N_{\Sigma/X\times \proj{}^1}\otimes L)=0$. Since $\pi$ is flat, we have $(2)$ from the cohomology and base change theorem (see \cite[III.12.11]{hartshorne}). Moreover, we have a short exact sequence:
	$$0\rightarrow \pi_*(N_{\Sigma/X\times \proj{}^1}\otimes L)\rightarrow \pi_*(N_{\Sigma/OG\times \proj{}^1}\otimes L)\rightarrow \pi_*(h^*N_{X/OG}\otimes L)\rightarrow 0$$ Taking the long exact sequence in cohomology, we have $(1)$. 
\end{proof}
\begin{rem}\label{enoughttoshowsurjective}
	There is a natural map $h^*T_{OG}\rightarrow N_{\Sigma/OG\times \pone}$. Additionally, we may identify $h^*N_{X/G}=h^*\mc{O}_{OG}(d)$. Thus to show that $D^L_X$ is surjective, it is enough to show that the following derivative map induced by $X$ is surjective:
	$$H^0(h^*T_{OG}\otimes L)\xrightarrow{D_{X,L}} H^0(h^*\mc{O}_{OG}(d)\otimes L)$$ 
\end{rem}
\begin{hyp}
	For the rest of the section, we will make the following assumptions:
	\begin{enumerate}
		\item $(\pi:\Sigma\rightarrow \proj{}^1,\sigma,h)$ as above is given by a trio of vector bundles $S_{k-1}\subset S_k\subset S_{k+1}$ such that $V^*$ surjects onto the global sections of $S_{k+1}^*, S_k^*, S_{k-1}^*$. Thus $\Sigma=\proj{S_{k+1}/S_{k-1}}$.
		\item $L=\mc{O}_{\Sigma}(-1)$.
	\end{enumerate}
\end{hyp}
\begin{lem}\label{constantsubbundle}
	There is a constant subbundle $W\otimes \mc{O}_{\Sigma}\subset h^*T_{OG}\otimes L$ such that $\dim W\ge n+1-2\dim H^0(\pone, S_{k+1}^*)$.
\end{lem}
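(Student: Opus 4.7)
The plan is to construct a natural map from a trivial sheaf on $\Sigma$ into $h^*T_{OG}\otimes L$ coming from the embedding $T_{OG}\hookrightarrow T_{Gr(k,n+1)}|_{OG}$, and then to identify the largest subspace $W\subset V$ on which this map simultaneously factors through $T_{OG}$ and remains injective on fibers. On $\Sigma = \proj{S_{k+1}/S_{k-1}}$ the pullback $h^*\mc{S}_k$ fits in the short exact sequence $0 \to \pi^*S_{k-1} \to h^*\mc{S}_k \to \mc{O}_\pi(-1) \to 0$, so dualizing and tensoring with $L = \mc{O}_\pi(-1)$ produces a natural embedding $\mc{O}_\Sigma \hookrightarrow h^*\mc{S}_k^* \otimes L$. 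Tensoring further with the trivial bundle $V \otimes \mc{O}_\Sigma$ and composing with the surjection $V\otimes h^*\mc{S}_k^* \otimes L \twoheadrightarrow h^*(V/\mc{S}_k) \otimes h^*\mc{S}_k^* \otimes L = h^*T_{Gr(k,n+1)}\otimes L$ defines a morphism
$$\Phi:V\otimes \mc{O}_\Sigma \longrightarrow h^*T_{Gr(k,n+1)} \otimes L.$$

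The next step is to analyze $\Phi$ fiberwise. At $p \in \Sigma$ with $h(p) = W_k$, the image $\Phi(v)|_p$ sits in $(V/W_k)\otimes W_k^*\otimes L|_p$ as $\bar v \otimes \phi_p$, where $\phi_p \in W_k^*\otimes L|_p$ corresponds to the canonical projection $W_k \to W_k/S_{k-1}$ and $\bar v$ is the class of $v$ in $V/W_k$, so $\Phi(v)|_p = 0$ iff $v \in W_k$. Using the description $T_{OG} = \ker(T_{Gr(k,n+1)}|_{OG}\to Sym^2\mc{S}_k^*)$ induced by $Q$, the image of $\Phi(v)|_p$ in $Sym^2 W_k^*\otimes L|_p$ is the symmetric product $Q(-,v)\cdot \phi_p$; since $Sym^\bullet W_k^*$ is an integral domain and $\phi_p$ is nonzero, this vanishes iff $Q(-,v)\equiv 0$ on $W_k$, i.e.\ iff $v \in W_k^\perp$. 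Hence $\Phi$ restricts to a map $W_0 \otimes \mc{O}_\Sigma\to h^*T_{OG}\otimes L$ precisely when $W_0 \subset \bigcap_p W_k^\perp$.

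As $p$ varies over $\Sigma$, the union $\bigcup_p W_k$ is exactly the total space $\bigcup_q S_{k+1}|_q \subset V$, so $\bigcap_p W_k^\perp = U^\perp$, where $U := \mathrm{span}\bigl(\bigcup_q S_{k+1}|_q\bigr)$. Under the running hypothesis that $V^*\otimes\mc{O}_{\pone} \to S_{k+1}^*$ is surjective on global sections, the sequence $0\to H^0(\pone, S_{k+1}^\perp)\to V\to H^0(\pone, S_{k+1}^*)\to 0$ is exact, giving $\dim U = \dim H^0(\pone, S_{k+1}^*)$ and $\dim U^\perp = n+1 - \dim H^0(\pone, S_{k+1}^*)$. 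Finally, for $\Phi|_{W\otimes \mc{O}_\Sigma}$ to be injective on every fiber I need $W \cap W_k = 0$ for all $p$, and it suffices to arrange $W \cap U = 0$. Since $W \subset U^\perp$, this reduces to the linear-algebra condition $W \cap (U\cap U^\perp) = 0$: taking $W$ to be any complement of the radical $U\cap U^\perp$ inside $U^\perp$ yields
$$\dim W = \dim U^\perp - \dim(U\cap U^\perp) \ge \dim U^\perp - \dim U = n+1 - 2\dim H^0(\pone, S_{k+1}^*),$$
as required. The hard part will be the fiberwise symmetric-tensor computation pinning down when $\Phi(v)|_p$ lies in $T_{OG}$; everything after that is pure linear algebra on $V$.
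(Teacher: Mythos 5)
Your proof is correct, and it takes a genuinely different route from the paper's. The paper argues at the level of global sections: it sets $W'=\ker(V^*\to H^0(\pone,S_{k+1}^*))$, uses it to produce a trivial direct summand $W_1\otimes\mc{O}_{\Sigma}$ of $V/E_k$, hence a constant subbundle of $h^*T_{Gr(k,n+1)}\otimes L$ after tensoring with the canonical inclusion $\mc{O}_{\Sigma}\subset E_k^*\otimes L$, and only then imposes tangency to $OG$ by passing to the kernel of the induced map $W_1\to H^0(\Sigma,Sym^2(E_k^*)\otimes L)$, bounding $h^0(\Sigma,Sym^2(E_k^*)\otimes L)\le h^0(\pone,S_{k+1}^*)$ through the filtration with graded pieces $Sym^2(\pi^*S_{k-1}^*)\otimes L$, $\pi^*S_{k-1}^*$, $\mc{O}_{\Sigma}(1)$. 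You instead define $\Phi$ on all of $V\otimes\mc{O}_{\Sigma}$ and perform both cut-downs pointwise: your derivative-of-$Q$ computation (which is the real content of your argument, and is correct, using that $W_k$ is isotropic so the lift ambiguity drops out) shows that a constant vector $v$ stays tangent to $OG$ along $\Sigma$ exactly when $v\in U^{\perp}$ with $U=\mathrm{span}\bigl(\bigcup_q S_{k+1}|_q\bigr)$ and $\dim U\le\dim H^0(\pone,S_{k+1}^*)$, and fiberwise injectivity is then bought by discarding the radical $U\cap U^{\perp}$, costing at most another $\dim U$. So both arguments lose $2\dim H^0(\pone,S_{k+1}^*)$, but for different reasons and in the opposite order: the paper spends the second loss on tangency (sections of $Sym^2(E_k^*)\otimes L$) and gets constancy/injectivity from the direct-summand structure, while you get tangency for free pointwise and spend the second loss on injectivity. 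Your version is more elementary (no filtration or cohomology count) and makes the meaning of $W$ transparent: for $w\in W\subset U^{\perp}$ the form $Q(w,-)$ vanishes on every $h(p)$, which is precisely the property exploited afterwards in Remark \ref{linearvanishing} and Lemma \ref{descriptionofderivativemap}, so your $W$ plugs into the rest of the section with only cosmetic changes (replacing the evaluation pairing by the $Q$-pairing).
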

\begin{proof}
	On $\pone$, observe that $(V/S_{k+1})^*=Ker(V^*\otimes \mc{O}_{\pone}\rightarrow S_{k+1}^*)$ has $\mc{O}_{\pone}$ and $\mc{O}(-1)_{\pone}$ as the only line bundles appearing in the direct sum decomposition. Let $W^{\prime}=Ker(V^*\rightarrow H^0(\pone,S_{k+1}^*))$. Therefore $W^{\prime}\otimes \mc{O}_{\pone}$ is a direct summand of $(V/S_{k+1})^*$. Note that $\dim W^{\prime}=n+1-\dim H^0(\pone,S_{k+1}^*)$. Since the map $(V/S_{k+1})^*\rightarrow (V/S_{k-1})^*$ is injective, $W^{\prime}\otimes \mc{O}_{\pone}$ is also a direct summand of $(V/S_{k-1})^*$. Let $W_1=W^{\prime *}$. Then $W_1\otimes \mc{O}_{\pone}$ is a direct summand of $V/S_{k-1}$ and $V/S_{k+1}$. Let $E_k$ on $\Sigma$ denote the pull-back of the tautological bundle from $OG$. By construction, we have: $\pi^*S_{k-1}\subset E_k\subset \pi^*S_{k+1}$. There are associated maps: $\pi^*V/S_{k-1}\rightarrow V/E_k\rightarrow \pi^*V/S_{k+1}$. Thus $W_1\otimes \mc{O}_{\Sigma}$ is a subbundle of $V/E_k$. There is an inclusion:
	$\mc{O}_{\Sigma}(1)\subset E_k^*$. Tensoring with $W_1\otimes L\subset V/E_k\otimes L$, we get an inclusion: $W_1\otimes \mc{O}_{\Sigma}\subset (V/E_k)\otimes E_k^*\otimes L$. There is a short exact sequence on $\Sigma$:
	$$0\rightarrow T_{OG}\otimes L\rightarrow V/E_k\otimes E_k^*\otimes L\rightarrow Sym^2(E_k^*)\otimes L\rightarrow 0$$
	$Sym^2(E_k^*)\otimes L$ has a filtration whose grades parts are: $Sym^2(\pi^*S_{k-1}^*)\otimes L, \pi^*(S_{k-1}^*), \mc{O}_{\Sigma}(2)\otimes L=\mc{O}_{\Sigma}(1)$. $Sym^2(\pi^*S_{k-1}^*)\otimes L$ has no global sections since its degree on the fibers of $\pi$ is $-1$. $H^0(\Sigma,\mc{O}_{\Sigma}(1))=H^0(\pone,(S_{k+1}/S_{k-1})^*)$ and $H^0(\Sigma,\pi^*S_{k-1}^*)=H^0(\pone,S_{k-1}^*)$. Thus $\dim H^0(\Sigma,Sym^2(E_k^*))\le \dim H^0(\pone,S_{k+1}^*)$. Finally, take $W=Ker(W_1\rightarrow H^0(\Sigma,Sym^2(E_k^*)))$. Clearly, $W\otimes \mc{O}_{\Sigma} \subset T_{OG}\otimes L$ and $\dim W \ge n+1-2\dim H^0(\pone, S_{k+1}^*)$.
\end{proof}
\begin{rem}\label{linearvanishing}
	Recall that we have the Plucker embedding $OG(k,n+1)\subset \proj{\bigwedge^{k}V}$. Under this embedding, we have: $\bigwedge^{k}V^*= H^0(\proj{\bigwedge^{k}V},\mc{O}(1))=H^0(OG,\mc{O}(1))$. In particular, using the notation from the above lemma, $Im(W^*\otimes \bigwedge^{k-1}V^*)\subset \bigwedge^{k}V^*$ vanishes on $\Sigma$. Let $V_d$ denote $H^0(OG,\mc{O}(d))$ and let $W^d$ denote the subspace of $V_d$ vanishing on $\Sigma$. There is a natural map $W^*\otimes \bigwedge^{k-1}V^*\otimes V_{d-1}\rightarrow V_d$ whose image is contained in $W^d$.
\end{rem}

There is a universal derivative map on $\Sigma$, $D:W^d\otimes T_{OG} \otimes L \rightarrow h^*\mc{O}_{OG}(d)\otimes L$. Restricting to the subbundle from Lemma \ref{constantsubbundle} and using Remark \ref{linearvanishing}, we get the following map:
$$D_L: W^*\otimes \bigwedge^{k-1}V^*\otimes V_{d-1}\otimes W\otimes \mc{O}_{\Sigma}\rightarrow h^*\mc{O}_{OG}(d)\otimes L=h^*\mc{O}_{OG}(d-1)\otimes \pi^*(\bigwedge^{k-1} S_{k-1}^*)$$
(since $\mc{O}_{\Sigma}(-1)=h^*\mc{O}_{OG}(-1)\otimes \pi^*(\bigwedge^{k-1} S_{k-1}^*)$).
\begin{lem}\label{descriptionofderivativemap}
	Let $e:W\otimes W^* \otimes \mc{O}_{\Sigma}\rightarrow \mc{O}_{\Sigma}$ be the natural evaluation map. Let $\rho^{k-1}:\bigwedge^{k-1}V^*\otimes \mc{O}_{\Sigma}\rightarrow \pi^*(\bigwedge^{k-1} S_{k-1}^*)$ be the wedge product of the map $V^*\otimes \mc{O}_{\pone}\rightarrow S_{k-1}^*$. Let $r_{d-1}:V_{d-1}\otimes \mc{O}_{\Sigma}\rightarrow h^*\mc{O}_{OG}(d-1)$ be the restriction map. Then:
	$$D_L=e\otimes \rho^{k-1}\otimes r_{d-1}$$
\end{lem}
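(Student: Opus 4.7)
The plan is to verify the identity pointwise over each $p\in\Sigma$ on decomposable tensors $\alpha\otimes\beta\otimes g\otimes w$ with $\alpha\in W^*$, $\beta\in\bigwedge^{k-1}V^*$, $g\in V_{d-1}$, $w\in W$; such tensors generate the source as an $\mc{O}_\Sigma$-module. Under the multiplication of Remark \ref{linearvanishing}, this tensor is sent to $F:=(\alpha\wedge\beta)\cdot g\in W^d\subset V_d$, where $\alpha\wedge\beta\in\bigwedge^k V^*=H^0(OG,\mc{O}(1))$ is a linear section vanishing on $\Sigma$. Since $\alpha\in W^*\subset W^{\prime}=\mathrm{Ker}(V^*\to H^0(\pone,S_{k+1}^*))$ vanishes on $S_{k+1}$, and hence on $E_k\subset \pi^*S_{k+1}$, the Leibniz rule gives $dF|_\Sigma=d(\alpha\wedge\beta)|_\Sigma\cdot g|_\Sigma$, with the second factor equal to $r_{d-1}(g)$. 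The problem reduces to computing $d(\alpha\wedge\beta)|_\Sigma$ evaluated along the tangent direction produced by $w$.

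At a point $p\in\Sigma$, I would choose a basis $e_1,\dots,e_{k-1}$ of $S_{k-1,\pi(p)}$ and extend by a vector $e_k\in E_{k,p}$ whose image generates $E_k/\pi^*S_{k-1}=\mc{O}_\Sigma(-1)$ at $p$. Unwinding the inclusion $W\otimes\mc{O}_\Sigma\subset T_{OG}\otimes L$ of Lemma \ref{constantsubbundle} (built by tensoring $W_1\otimes\mc{O}_\Sigma\subset V/E_k$ with the subbundle $\mc{O}_\Sigma(1)\subset E_k^*$), the element $w\otimes 1$ corresponds in $T_{OG,p}\subset\mathrm{Hom}(E_{k,p},(V/E_k)_p)$ to the map $\phi$ with $\phi(e_j)=0$ for $j<k$ and $\phi(e_k)=w$. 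A first-order expansion in a local Grassmannian chart produces
$$d(\alpha\wedge\beta)(\phi)=\sum_{i=1}^{k}(\alpha\wedge\beta)(e_1\wedge\cdots\wedge\phi(e_i)\wedge\cdots\wedge e_k),$$
and only the $i=k$ summand survives. Expanding this wedge as an alternating sum and using $\alpha|_{E_k}=0$ to kill every term in which $\alpha$ pairs against some $e_j$ with $j<k$ collapses everything to $\pm\alpha(w)\cdot\beta(e_1\wedge\cdots\wedge e_{k-1})$. The first factor is the evaluation $e(\alpha\otimes w)$, and the second is precisely $\rho^{k-1}(\beta)$ evaluated on the chosen basis of $S_{k-1,\pi(p)}$. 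Combined with $g|_\Sigma=r_{d-1}(g)$ and the fact that $D_L$ is by construction $\mc{O}_\Sigma$-linear and natural in $p$, the pointwise equality promotes to the asserted identity of sheaf maps $D_L=e\otimes\rho^{k-1}\otimes r_{d-1}$.

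The main obstacle will be pure bookkeeping: tracking the various twists by $L=\mc{O}_\Sigma(-1)$ and checking that the target identification $\mc{O}_{OG}(d)\otimes L=h^*\mc{O}_{OG}(d-1)\otimes\pi^*\bigwedge^{k-1}S_{k-1}^*$ (arising from $\mc{O}_\Sigma(-1)=h^*\mc{O}_{OG}(-1)\otimes\pi^*\bigwedge^{k-1}S_{k-1}^*$) is compatible with the derivative formula, and fixing sign conventions coming from antisymmetry of the wedge. Once these choices are pinned down at a single point, the three simpler maps $e$, $\rho^{k-1}$, $r_{d-1}$ reassemble term-for-term into $D_L$, completing the identification.
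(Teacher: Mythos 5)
Your proposal is correct and follows essentially the same route as the paper's proof: a pointwise check, the Leibniz rule to peel off the $r_{d-1}(g)$ factor (the paper phrases this as reducing to $d=1$), and the key observation that $\alpha\in W^*$ vanishes on the tautological subspace $U_k\subset S_{k+1}$, which collapses the derivative of the linear form to $\alpha(w)\cdot\beta(e_1\wedge\cdots\wedge e_{k-1})=e\otimes\rho^{k-1}$ under the identification $h^*\mc{O}_{OG}(d)\otimes L\cong h^*\mc{O}_{OG}(d-1)\otimes\pi^*\bigwedge^{k-1}S_{k-1}^*$. The paper words the derivative computation via extending $t:U_k\to V/U_k$ to $\tilde t:U_k\to V$ and pulling back forms rather than via an explicit chart expansion, but this is the same calculation and your remaining bookkeeping of twists and signs is exactly what the paper's final multilinear-algebra identification carries out.
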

\begin{proof}
	First observe that it is enough to check this for $d=1$. Since $Im(W^*\otimes \bigwedge^{k-1}V^*)$ vanishes on $\Sigma$, the statement for $d>1$ follows by the Leibniz rule. We can check the equality of the maps pointwise. Let $P\in \Sigma$ correspond to a chain of vector-spaces $U_{k-1}\subset U_{k}\subset U_{k+1}$ of dimensions $k-1,k,k+1$. The map $D_L$ is induced by the inclusion $W\otimes (U_k/U_{k-1})^* \hookrightarrow V/U_k\otimes U_k^*=Hom(U_k,V/U_k)$. We may extend a map $t:U_k\rightarrow V/U_k$ to $\tilde{t}:U_k\rightarrow V$ by chosing a splitting $V=V/U_k\oplus U_k$ and using the identity map $U_k\rightarrow U_k$. The derivative map is simply pulling back the forms through the map $U_k\rightarrow V$. Since $W^*$ vanishes on $U_k$, the pull-back of $w^*\in W^*$ via $\tilde{t}$ factors through $t$. Thus if $t\in W\otimes (U_k/U_{k-1})^*$, the pull-back of $w^*$ is simply evaluation on $W$. Hence the derivative map $W\otimes (U_k/U_{k-1})^*\otimes W^*\otimes \bigwedge^{k-1}V^*\rightarrow \bigwedge^{k}U_k^*$ factors through the evaluation map $e|_P$. We are left with the map: $ (U_k/U_{k-1})^*\otimes \bigwedge^{k-1}V^*\rightarrow \bigwedge^{k}U_k^*$. Tensoring with $(U_k/U_{k-1})=\bigwedge^k U_k\otimes \bigwedge^{k-1}U_{k-1}^*$, we get our desired map: $D_L: \bigwedge^k U_k^*\otimes \bigwedge^{k-1}U_{k-1}\otimes \bigwedge^{k-1}V^*\otimes \bigwedge^k U_k\otimes \bigwedge^{k-1}U_{k-1}^*\rightarrow \bigwedge^{k-1}U_{k-1}^*$. A priori $D_L$ is the evaluation map $\bigwedge^{k-1}U_{k-1}\otimes \bigwedge^{k-1}V^*\rightarrow T$ (where $T$ is the trivial $1$-dimensional vector space) followed by the pairing $ \bigwedge^k U_k^*\otimes \bigwedge^k U_k\otimes \bigwedge^{k-1}U_{k-1}^*\rightarrow \bigwedge^{k-1}U_{k-1}^*$. However it is easy to check that this is the same map as the evaluation $\bigwedge^k U_k^*\otimes\bigwedge^{k-1}U_{k-1}\otimes \bigwedge^k U_k\otimes \bigwedge^{k-1}U_{k-1}^*\rightarrow T$ followed by the restriction map $\rho^{k-1}|_P$. Thus $D_L=e\otimes \rho^{k-1}$ when $d=1$. 
\end{proof}

\begin{lem}
	The restriction maps $V_d\rightarrow H^0(\Sigma, h^*\mc{O}_{OG}(d))$ is surjective for all $d\ge 1$.
\end{lem}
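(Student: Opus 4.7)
The plan is to factor through the Pl\"ucker embedding. Since $\mc{O}_{OG}(1)$ is the restriction of $\mc{O}(1)$ from $\mb{P}(\bigwedge^k V)$, the restriction map of interest sits in the composite
\begin{equation*}
\text{Sym}^d(\bigwedge\nolimits^k V^*) \to V_d \to H^0(\Sigma, h^*\mc{O}_{OG}(d)),
\end{equation*}
so it suffices to show this composite is surjective. On $\Sigma$ the tautological inclusion $E_k \hookrightarrow \pi^* V$ induces a surjection $\pi^*\bigwedge^k V^* \twoheadrightarrow \det E_k^* = h^*\mc{O}_{OG}(1)$, and taking $d$-th symmetric powers gives a surjection $\pi^*\text{Sym}^d(\bigwedge^k V^*) \twoheadrightarrow h^*\mc{O}_{OG}(d)$. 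Letting $K$ be its kernel, and using that $H^0(\Sigma,\pi^*\text{Sym}^d(\bigwedge^k V^*)) = \text{Sym}^d(\bigwedge^k V^*)$, the above composite is precisely the connecting map on $H^0$, so the task reduces to proving $H^1(\Sigma, K) = 0$.

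I would analyze $H^1(\Sigma, K)$ via the Leray spectral sequence for $\pi: \Sigma \to \pone$, reducing the vanishing to $H^1(\pone, \pi_* K) = 0$ together with $H^0(\pone, R^1 \pi_* K) = 0$. Using the projection formula with $\pi_* \mc{O}_\Sigma(d) = \text{Sym}^d Q^*$ (where $Q = S_{k+1}/S_{k-1}$), one has $\pi_* h^*\mc{O}_{OG}(d) = (\det S_{k-1}^*)^d \otimes \text{Sym}^d Q^*$. This sheaf is naturally a quotient of $\text{Sym}^d(\bigwedge^k S_{k+1}^*)$ via the filtration on $\bigwedge^k S_{k+1}^*$ coming from $0 \to S_{k-1} \to S_{k+1} \to Q \to 0$, whose top quotient is $\det S_{k-1}^* \otimes Q^*$ and whose sub is $\bigwedge^{k-2} S_{k-1}^* \otimes \det Q^*$. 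I would therefore split the required surjectivity into two stages: first, surjectivity on $H^0$ of $\text{Sym}^d(\bigwedge^k V^*) \otimes \mc{O}_\pone \twoheadrightarrow \text{Sym}^d(\bigwedge^k S_{k+1}^*)$, obtained from the sheaf surjection $V^* \otimes \mc{O}_\pone \twoheadrightarrow S_{k+1}^*$; and second, surjectivity on $H^0$ of $\text{Sym}^d(\bigwedge^k S_{k+1}^*) \twoheadrightarrow (\det S_{k-1}^*)^d \otimes \text{Sym}^d Q^*$. In each case one filters the kernel and verifies that every graded piece has vanishing $H^1$ on $\pone$.

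The main obstacle is the $H^1$-vanishing bookkeeping: each graded piece of the two filtered kernels must split into line bundles $\mc{O}_\pone(a)$ with $a \ge -1$. This is where the specific positivity from Proposition \ref{myfamilies} is essential---in each family constructed there, every $S_i^*$ is a direct sum of $\mc{O}_\pone(a)$ with $a \ge 1$, so $\bigwedge^j S_i^*$ consists of highly positive line bundles. Combined with the standing hypothesis that $V^*$ surjects onto $H^0$ of $S_{k+1}^*$ (which forces the kernel $(V/S_{k+1})^*$ to satisfy $H^1 = 0$, hence to decompose into $\mc{O}_\pone(a)$ with $a \ge -1$), this positivity absorbs all the negative contributions appearing in the filtration pieces, yielding the required vanishings and hence the surjectivity for all $d \ge 1$.
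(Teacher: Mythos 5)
Your reduction of the lemma to the vanishing $H^1(\Sigma,K)=0$, the Leray step, and your ``stage (b)'' are all sound, but the mechanism you propose for ``stage (a)'' does not work, and this is exactly where the content of the lemma lies. The kernel of $\bigwedge^k V^*\otimes\mc{O}_{\pone}\twoheadrightarrow \bigwedge^k S_{k+1}^*$ is filtered with graded pieces $\bigwedge^j G\otimes\bigwedge^{k-j}S_{k+1}^*$ for $1\le j\le k$, where $G=(V/S_{k+1})^*$. Surjectivity of $V^*\to H^0(\pone,S_{k+1}^*)$ gives only $H^1(G)=0$, i.e.\ $G\cong \mc{O}^{\oplus p}\oplus\mc{O}(-1)^{\oplus q}$ with $q=\deg S_{k+1}^*\ge k+1$ for the families of Proposition \ref{myfamilies}; it says nothing about the higher wedge powers, and $\bigwedge^j G$ contains summands $\mc{O}(-j)$ for every $j\le q$. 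In particular the piece with $j=k$ is $\bigwedge^k G\otimes\bigwedge^0 S_{k+1}^*\supset \mc{O}(-k)$, and there is no positive factor left to ``absorb'' it; so your key claim that every graded piece splits into $\mc{O}_{\pone}(a)$ with $a\ge -1$ is false as soon as $k\ge 2$ (and the pieces $\mathrm{Sym}^i$ of this kernel that occur for $d\ge 2$ are even more negative). Consequently the $H^1$-bookkeeping you describe cannot be carried out for the first stage, and $H^1(\Sigma,K)=0$ is not established.

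The statement of your first stage is nevertheless true, but it must be proved by exhibiting sections rather than by killing $H^1$ along a filtration: any element of $\ker\bigl(V^*\to H^0(\pone,S_{k+1}^*)\bigr)$ wedges to the zero section, so the image of $\bigwedge^k V^*$ in $H^0(\pone,\bigwedge^k S_{k+1}^*)$ coincides with the image of $\bigwedge^k H^0(\pone,S_{k+1}^*)$, and since $S_{k+1}^*$ (being dual to a subbundle of $V\otimes\mc{O}_{\pone}$) splits into line bundles of non-negative degree, the multiplication maps $H^0(\mc{O}_{\pone}(a))\otimes H^0(\mc{O}_{\pone}(b))\to H^0(\mc{O}_{\pone}(a+b))$ are surjective and one concludes summand by summand. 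This is essentially the paper's route: it reduces to $d=1$ using surjectivity of $\mathrm{Sym}^d(V_1)\to V_d$ together with surjectivity of $\mathrm{Sym}^d$ on sections of the split pushforward $\pi_*h^*\mc{O}_{OG}(1)=(S_{k+1}/S_{k-1})^*\otimes\bigwedge^{k-1}S_{k-1}^*$, and then for $d=1$ it writes down sections explicitly via $K^*\otimes\bigwedge^{k-1}V^*$ with $K^*=\ker\bigl(V^*\to H^0(\pone,S_{k-1}^*)\bigr)$, so no $H^1$ computation of any kernel bundle is ever needed. A minor further point: the lemma is proved in the paper under the standing assumption (surjectivity of $V^*$ onto the global sections of $S_{k+1}^*,S_k^*,S_{k-1}^*$) only, whereas you invoke the stronger positivity of the explicit families; your argument should not need summands of degree $\ge 1$, since non-negativity is automatic and suffices.
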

\begin{proof}
	Observe that the maps $Sym^d(V_1)\rightarrow V_d$ are surjective. Let $E_k$ denote the pull-back of the tautological bundle from $OG$. From the short exact sequence $0\rightarrow \pi^*S_{k-1}\rightarrow E_k \rightarrow \mc{O}_E(1)\rightarrow 0$, we see that, $h^*\mc{O}_{OG}(d)=\mc{O}_{\Sigma}(d)\otimes \pi^* (\bigwedge^{k-1}S_{k-1}^*)^{\otimes d}$. Pushing down via $\pi$, $H^0(\Sigma, h^*\mc{O}_{OG}(d))=H^0(\pone,Sym^d((S_{k+1}/S_{k-1})^*\otimes \bigwedge^{k-1}S_{k-1}^*))$. Therefore, $Sym^d(H^0(\Sigma, h^*\mc{O}_{OG}(1)))$ surjects onto $H^0(\Sigma,h^*\mc{O}_{OG}(d))$. Thus it is enough to prove the lemma for $d=1$. Recall that $V_1=\bigwedge^{k}V^*$ and $H^0(\Sigma, h^*\mc{O}_{OG}(1))=H^0(\pone,(S_{k+1}/S_{k-1})^*\otimes \bigwedge^{k-1}S_{k-1}^*)$. Let $K^*\subset V^*$ be the kernel of the map $V^*\rightarrow H^0(\pone,S_{k-1}^*)$. Then $K^*$ surjects onto $H^0(\pone,(S_{k+1}/S_{k-1})^*)$ (considered as a subspace of $H^0(\pone,S_{k+1}^*)$). Thus, $K^*\otimes \bigwedge^{k-1}V^*$ surjects onto $H^0(\pone,(S_{k+1}/S_{k-1})^*)\otimes H^0(\pone,\bigwedge^{k-1}S_{k-1}^*)$ which surjects onto $H^0(\pone,(S_{k+1}/S_{k-1})^*\otimes \bigwedge^{k-1}S_{k-1}^*)$. This map factors through $K^*\otimes \bigwedge^{k-1}V^*\rightarrow \bigwedge^k V^*$. Thus $V_1\rightarrow H^0(\Sigma, h^*\mc{O}_{OG}(1))$ is surjective.
\end{proof}
\begin{cor}\label{inequalityweneed}
	If $\dim H^0(h^*\mc{O}_{OG}(d-1)\otimes \pi^*(\bigwedge^{k-1} S_{k-1}^*)) \le n+1-2\dim H^0(\pone, S_{k+1}^*)$, there is a degree $d$ hypersurface $X$ containing $\Sigma$, such that the map $D^L_X$ (from Lemma \ref{enoughtosurject}) is surjective.
\end{cor}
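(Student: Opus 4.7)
The plan is to construct $X\in W^d$ explicitly, exploiting the tensor-product factorization $D_L = e\otimes \rho^{k-1}\otimes r_{d-1}$ from Lemma \ref{descriptionofderivativemap}. Let $r = \dim H^0(\Sigma, h^*\mc{O}_{OG}(d-1)\otimes \pi^*(\bigwedge^{k-1}S_{k-1}^*))$; the hypothesis together with Lemma \ref{constantsubbundle} gives $\dim W \ge r$. By Remark \ref{enoughttoshowsurjective} it suffices to make the restriction $D_{X,L}\colon H^0(h^*T_{OG}\otimes L)\to H^0(h^*\mc{O}_{OG}(d)\otimes L)$ surjective, and since $W\otimes\mc{O}_\Sigma\hookrightarrow h^*T_{OG}\otimes L$ with $H^0(W\otimes\mc{O}_\Sigma)=W$, it is enough to arrange that $D_X^L|_W\colon W\to H^0(\Sigma, h^*\mc{O}_{OG}(d-1)\otimes\pi^*(\bigwedge^{k-1}S_{k-1}^*))$ is surjective on global sections.

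First I would check that the multiplication map
$$\bigwedge^{k-1}V^*\otimes V_{d-1}\;\xrightarrow{H^0(\rho^{k-1})\otimes H^0(r_{d-1})}\;H^0(\Sigma, h^*\mc{O}_{OG}(d-1)\otimes\pi^*(\textstyle\bigwedge^{k-1}S_{k-1}^*))$$
is surjective. The map $V_{d-1}\to H^0(\Sigma, h^*\mc{O}_{OG}(d-1))$ is surjective by the preceding lemma. Since $V^*\twoheadrightarrow H^0(\pone, S_{k-1}^*)$ by hypothesis and $S_{k-1}^*$ splits as a sum of nonnegative-degree line bundles on $\pone$, the induced map $\bigwedge^{k-1}V^*\to H^0(\pone,\det S_{k-1}^*)$ is surjective (monomials in the summands span). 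The residual multiplication of global sections, after pushing forward by $\pi$ and applying the projection formula, is surjectivity of $H^0(\pone,\det S_{k-1}^*)\otimes H^0(\pone, \pi_*h^*\mc{O}_{OG}(d-1))\to H^0(\pone,\det S_{k-1}^*\otimes\pi_* h^*\mc{O}_{OG}(d-1))$; this is the standard fact that multiplication of global sections of globally generated vector bundles on $\pone$ is surjective.

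Using surjectivity, choose a basis $\{s_i = \rho^{k-1}(\omega_i)\cdot r_{d-1}(g_i)\}_{i=1}^r$ of the target, with $\omega_i\in\bigwedge^{k-1}V^*$ and $g_i\in V_{d-1}$. Because $W^*\subset V^*$ (as $W$ is a subspace of $W_1=W'^*$ with $W'\subset V^*$), and since $\dim W^* = \dim W \ge r$, we may pick $w_1^*,\ldots,w_r^*\in W^*$ whose restrictions to $W$ are linearly independent. Set
$$X := \sum_{i=1}^r (w_i^*\wedge\omega_i)\cdot g_i\;\in\; V_d,$$
which lies in $W^d$ by Remark \ref{linearvanishing}. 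By Lemma \ref{descriptionofderivativemap} and linearity of the derivative in $X$,
$$D_X^L(w) = \sum_{i=1}^r w_i^*(w)\,s_i\qquad\text{for }w\in W,$$
and since $\{s_i\}$ is a basis and $\{w_i^*|_W\}$ is linearly independent, this map $W\to H^0$ is surjective. In particular $X\ne 0$ in $V_d$, and $D_X^L$ itself is surjective on global sections, which is what we wanted.

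The bulk of the argument is bookkeeping; the genuinely non-trivial point, and the main obstacle, is the surjectivity of the multiplication map in the second paragraph, because a priori one must pass from $V^*\twoheadrightarrow H^0(\pone, S_{k-1}^*)$ to surjectivity on wedge powers and on products with $V_{d-1}$. Once that is established, the explicit factorization of $D_L$ from Lemma \ref{descriptionofderivativemap} reduces the existence of $X$ to a one-line linear-algebra construction rather than a delicate rank-deficiency argument.
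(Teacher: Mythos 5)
Your construction is exactly the paper's: the paper likewise takes a basis of $H^0(\Sigma,h^*\mc{O}_{OG}(d-1)\otimes\pi^*(\bigwedge^{k-1}S_{k-1}^*))$ coming from pure products, lifts the factors through $\rho^{k-1}$ and $r_{d-1}$, pairs them with linearly independent elements of $W^*$ (available since the stated inequality and Lemma \ref{constantsubbundle} give $\dim W\ge r$), and defines $F=\sum_i x_i\otimes\overline{b_i}\otimes\overline{a_i}$, concluding surjectivity of $D_{X,L}$ via Lemma \ref{descriptionofderivativemap} and Remark \ref{enoughttoshowsurjective}. Your only addition is to spell out the surjectivity of the multiplication map $\bigwedge^{k-1}V^*\otimes V_{d-1}\to H^0$, which the paper leaves implicit; the argument is correct and the same in substance.
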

\begin{proof}
	Let $\{a_i\otimes b_i\lvert 1\le i \le N\}$ be elements in $H^0(\Sigma,h^*\mc{O}_{OG}(d-1))\otimes H^0(\Sigma,\pi^*(\bigwedge^{k-1} S_{k-1}^*))$ whose images give a basis for $H^0(h^*\mc{O}_{OG}(d-1)\otimes \pi^*(\bigwedge^{k-1} S_{k-1}^*))$. Chose $\overline{b_i}\in \bigwedge^{k-1}(V^*)$ to be elements that map to $b_i$ under $\rho^{k-1}$. Chose $\overline{a_i}$ to be inverse images of $a_i$ under $r_{d-1}$. And chose linearly independent elements $x_1,\ldots,x_N\in W^*$. Let the equation of $X$ be given by the image of $F=\sum_{i=1}^N x_i\otimes \overline{b_i}\otimes\overline{a_i}$ in $V_d$. Clearly $F$ vanishes on $\Sigma$. Then the induced map $D_F: W\otimes \mc{O}_{\Sigma}\rightarrow h^*\mc{O}_{OG}(d)\otimes L$ is surjective on global sections following Lemma \ref{descriptionofderivativemap}. Thus the map $D_{X,L}:H^0(\Sigma,T_{OG}\otimes L)\rightarrow H^0(\Sigma,h^*\mc{O}_{OG}(d)\otimes L)$ is also surjective. From Remark \ref{enoughttoshowsurjective}, we have the desired result.
\end{proof}
\begin{rem}\label{nonsingularalongsigma}
	Note that the bundle $h^*\mc{O}_{OG}(d)\otimes L$ is globally generated on $\Sigma$. Thus the derivative map $T_{OG}\rightarrow N_{X/OG}$ is surjective everywhere on $\Sigma$. Therefore $X$ is non-singular along $\Sigma$. 
\end{rem}
Finally, we can discuss the families in $X$. Consider the families from Proposition $1.7$. The degree of the normal bundle of the section $\sigma$ is independent of the ambient space. Let $L_i=\mc{O}_{\Sigma}(-\sigma(\pone))\otimes \pi^*\mc{O}_{\pone}(-i)$ for $i=1,2$. For our families to be twisting (respectively very twisting) in $X$, we need $T_{ev_X}(-1)$ (respectively $T_{ev_X}(-2)$) to have vanishing higher cohomology. 
\begin{lem}\label{descoftwistinglb}
	For the families in Proposition \ref{myfamilies}: $L_1 = \mc{O}_{\Sigma}(-1)$ for the families in $(1)$ and $L_2=\mc{O}_{\Sigma}(-1)$ for family $(2)$. Moreover $\pi_*(h^*\mc{O}_{OG}(d)\otimes \mc{O}_{\Sigma}(-1))$ is globally generated in all cases.
\end{lem}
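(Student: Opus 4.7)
\emph{Proof plan.} The plan is to give an explicit expression for the divisor class $[\sigma(\pone)]$ on $\Sigma = \mb{P}(S_{k+1}/S_{k-1})$ in terms of the relative hyperplane bundle and a pullback from the base $\pone$, then substitute the specific vector bundle data of the two families of Proposition \ref{myfamilies}, and finally reduce the global generation claim to a non-negativity check for line bundles on $\pone$.

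First I would observe that, as is used in the proof of Lemma \ref{zerosetofvectorbundle}, one has $\sigma^*\mc{O}_\Sigma(1) \cong (S_k/S_{k-1})^*$. Since $\sigma$ is a section of $\pi$, the divisor $\sigma(\pone)$ has class $\mc{O}_\Sigma(1) \otimes \pi^*M$ for some line bundle $M$ on $\pone$; restricting via $\sigma$ and using the normal bundle formula $N_{\sigma(\pone)/\Sigma} \cong (S_{k+1}/S_k)\otimes (S_k/S_{k-1})^*$ from Section~\ref{sectiontwistable} forces $M \cong S_{k+1}/S_k$. Thus
$$\mc{O}_\Sigma(-\sigma(\pone)) \;\cong\; \mc{O}_\Sigma(-1)\otimes \pi^*(S_{k+1}/S_k)^*.$$
For the families in (1), $S_{k+1}/S_k = \mc{O}_{\pone}(-1)$, so $(S_{k+1}/S_k)^* = \mc{O}_{\pone}(1)$ exactly cancels the $\pi^*\mc{O}_{\pone}(-1)$ factor in $L_1$; for family (2), $S_{k+1}/S_k = \mc{O}_{\pone}(-2)$, and the resulting $\pi^*\mc{O}_{\pone}(2)$ cancels the $\pi^*\mc{O}_{\pone}(-2)$ factor in $L_2$. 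In each case one obtains $L_i \cong \mc{O}_\Sigma(-1)$.

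For the global generation statement, I would combine the identity $h^*\mc{O}_{OG}(d) \cong \mc{O}_\Sigma(d) \otimes \pi^*(\det S_{k-1}^*)^d$ (derived inside the proof of Lemma \ref{zerosetofvectorbundle}) with the standard identification $\pi_*\mc{O}_\Sigma(d-1) \cong \mathrm{Sym}^{d-1}\bigl((S_{k+1}/S_{k-1})^*\bigr)$ on the $\pone$-bundle $\Sigma$ to obtain
$$\pi_*\bigl(h^*\mc{O}_{OG}(d)\otimes \mc{O}_\Sigma(-1)\bigr) \;\cong\; \mathrm{Sym}^{d-1}\bigl((S_{k+1}/S_{k-1})^*\bigr) \otimes (\det S_{k-1}^*)^d.$$
Plugging in the data of each family reduces this to an explicit direct sum of line bundles on $\pone$: for family (1) with parameter $i$ it becomes a direct sum of copies of $\mc{O}_{\pone}(dk-1+id)$, and for family (2) a direct sum of copies of $\mc{O}_{\pone}(3kd-d-2)$. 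Since both degrees are non-negative for all admissible values of $k$, $d$, and $i$, each pushforward is globally generated on $\pone$.

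The only real point requiring care will be keeping the convention for $\mc{O}_\Sigma(1)$ consistent with both the earlier calculation of $h^*\mc{O}_{OG}(d)$ and the normal bundle formula; once the expression for $[\sigma(\pone)]$ is in hand, everything reduces to elementary degree computations for line bundles on $\pone$, so I do not anticipate any substantive obstacle beyond bookkeeping.
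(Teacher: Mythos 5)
Your proposal is correct and follows essentially the same route as the paper: it uses the identity $\mc{O}_{\Sigma}(-\sigma(\pone))\cong\mc{O}_{\Sigma}(-1)\otimes\pi^*(S_{k+1}/S_k)^*$ together with $h^*\mc{O}_{OG}(d)\cong\mc{O}_{\Sigma}(d)\otimes\pi^*(\det S_{k-1}^*)^d$, yielding the same pushforward $Sym^{d-1}((S_{k+1}/S_{k-1})^*)\otimes(\det S_{k-1}^*)^d$ and the same positivity check on $\pone$. The only difference is that you spell out the normal-bundle derivation of $[\sigma(\pone)]$ and the explicit summand degrees, which the paper leaves implicit.
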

\begin{proof}
	By pulling back the relevant bundles through $\sigma$, we can see: $\mc{O}(-\sigma(\pone))=\mc{O}_{\Sigma}(-1)\otimes \pi^*(S_{k+1}/S_k)^*$. The first claim follows from the explicit description of the families.\\
	For the second claim, note that $h^*\mc{O}_{OG}(1)=\mc{O}_{\Sigma}(1)\otimes \pi^*(\bigwedge^{k-1}S_{k-1}^*)$. Thus $\pi_*(h^*\mc{O}_{OG}(d)\otimes \mc{O}_{\Sigma}(-1))=Sym^{d-1}((S_{k+1}/S_{k-1})^*)\otimes (\bigwedge^{k-1}S_{k-1}^*)^{d}$. This is globally generated in all of our cases.
\end{proof}
Thus, from Remark \ref{enoughttoshowsurjective}, Lemma \ref{enoughtosurject} and Lemma \ref{descoftev}, it is enough to find $X$ such that $D_{X,L_1}$ is surjective for families in $(1)$ (respectively $D_{X,L_2}$ is surjective for family $(2)$). 
\begin{prop}\label{twistablesinx}
	Under Assumption \ref{rsshyp}, for a general degree $d$ hypersurface $X$ in $OG$, $X$ contains the type of twistable and very twistable curves constructed in Proposition \ref{myfamilies}.
\end{prop}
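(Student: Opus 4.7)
The plan is to combine Corollary \ref{inequalityweneed} with the openness lemma at the start of this subsection: it suffices to exhibit, for each family $(\pi:\Sigma\to\pone,\sigma,h)$ constructed in Proposition \ref{myfamilies}, a single degree $d$ hypersurface $X_0\subset OG$ containing $h(\Sigma)$ such that the derivative map $D^L_{X_0}$ of Lemma \ref{enoughtosurject} is surjective for $L = L_1 = \mc{O}_\Sigma(-1)$ (in the twistable cases) or $L = L_2 = \mc{O}_\Sigma(-1)$ (in the very twistable case). Once this surjectivity is in hand, Lemma \ref{enoughtosurject} gives the vanishing $H^1(\pone,\pi_*(N_{\Sigma/X_0\times\pone}\otimes L_i)) = 0$, which by Lemma \ref{descoftev} is equivalent to $s^*T_{ev_{X_0}}\otimes\mc{O}_{\pone}(-i)$ having no $H^1$; that in turn says $s^*T_{ev_{X_0}}$ is globally generated when $i=1$ and ample when $i=2$, which is the required positivity. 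The identifications $L_i = \mc{O}_\Sigma(-1)$ and the global generation of $\pi_*(h^*\mc{O}_{OG}(d)\otimes\mc{O}_\Sigma(-1))$ needed as hypotheses of Lemma \ref{enoughtosurject} are both already supplied by Lemma \ref{descoftwistinglb}. Remark \ref{nonsingularalongsigma} additionally guarantees $X_0$ is smooth along $\Sigma$, so the family indeed lives in the smooth locus of a hypersurface.

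Thus the entire proof reduces to verifying the numerical inequality of Corollary \ref{inequalityweneed} for each family, namely
\[
\dim H^0\bigl(\pone,\, Sym^{d-1}((S_{k+1}/S_{k-1})^*)\otimes(\bigwedge^{k-1} S_{k-1}^*)^{\otimes d}\bigr) \;\le\; n+1-2\dim H^0(\pone,S_{k+1}^*),
\]
where I use $h^*\mc{O}_{OG}(d-1) = \mc{O}_\Sigma(d-1)\otimes \pi^*(\bigwedge^{k-1}S_{k-1}^*)^{d-1}$ and project by $\pi$. For each family this is a routine computation with line bundles on $\pone$, using the explicit splittings of $S_{k-1}, S_k, S_{k+1}$ from Proposition \ref{myfamilies}. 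For the twistable family $(1)_i$ one obtains left side $d^2(k+i)$ and right side $n-4k-2i-3$. For the very twistable family $(2)$ one obtains left side $3kd^2-d^2-d$ and right side $n-8k-3$.

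The main obstacle, and the one for which Assumption \ref{rsshyp} is precisely tailored, is the very twistable family $(2)$: the required inequality $3kd^2-d^2-d\le n-8k-3$ rearranges to exactly $n+1-8k-4\ge (3k-1)d^2-d$, which is Assumption \ref{rsshyp}. The twistable cases $(1)_i$ for $0\le i\le k-1$ are strictly weaker; indeed the difference between the two bounds is $kd^2-d+2k+2$, which is always non-negative, so they follow automatically from the same hypothesis. Applying Corollary \ref{inequalityweneed} in each case produces the desired $X_0$, and the openness lemma at the start of the subsection then propagates twistability (respectively very twistability) to a general hypersurface $X$, completing the proof.
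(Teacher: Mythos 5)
Your reduction is exactly the paper's: exhibit one hypersurface $X_0\supset h(\Sigma)$ with $D^{L}_{X_0}$ surjective via Corollary \ref{inequalityweneed}, feed this into Lemma \ref{enoughtosurject} and Lemma \ref{descoftev} (with the identifications $L_1\cong L_2\cong\mc{O}_{\Sigma}(-1)$ and the global generation from Lemma \ref{descoftwistinglb}), and then spread out to a general $X$ by the openness lemma. Your numerics are also right: for family $(2)$ the inequality $3kd^2-d^2-d\le n-8k-3$ is exactly Assumption \ref{rsshyp}, and your right-hand sides $n-4k-2i-3$ for the families $(1)_i$ are in fact the ones dictated by $n+1-2\dim H^0(\pone,S_{k+1}^*)$ (the paper's displayed bound $n+1-2k-2-i$ drops the factor of $2$, harmlessly), and your check that $(2)$ dominates all the $(1)_i$ is correct.

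The gap is that positivity of $s^*T_{ev_{X_0}}$ is not all that twistability requires: condition $(1)$ of Definition \ref{twistabledef} demands that the base curve $\zeta=h\circ\sigma:\pone\to X_0$ be free in $X_0$, i.e.\ $\zeta^*T_{X_0}$ globally generated. This is automatic in $OG$ but not in a hypersurface, and the openness lemma you invoke presupposes a genuinely twistable curve in $X_0$ (all conditions of the definition), so this step cannot be skipped. The paper closes it with a separate lemma immediately after the proposition: for the explicit $X_0$ built in Corollary \ref{inequalityweneed} one has $H^0(\Sigma,h^*T_{OG}\otimes L)\rightarrow H^0(\Sigma,h^*N_{X_0/OG}\otimes L)$ surjective, and since $h^1(\Sigma,h^*N_{X_0/OG}\otimes L\otimes\mc{O}(-\sigma(\pone)))=0$ for these families, restricting to $\sigma(\pone)$ gives surjectivity of $H^0(\pone,\zeta^*T_{OG}(-1))\rightarrow H^0(\pone,\zeta^*N_{X_0/OG}(-1))$; combined with freeness of $\zeta$ in $OG$ and the smoothness of $X_0$ along $\Sigma$ (Remark \ref{nonsingularalongsigma}), the normal bundle sequence then yields $H^1(\pone,\zeta^*T_{X_0}(-1))=0$, so $\zeta$ is free. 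Your write-up does cover the remaining conditions implicitly: unobstructedness of the lines is Lemma \ref{enoughtosurject}(2), and $\deg\mc{N}_{\sigma}$ is ambient-independent and equals $0$ for these families; but the freeness of $\zeta$ in the hypersurface must be argued, not assumed.
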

\begin{proof}
	We only need to show one hypersurface $X$ containing each type of families. For each of the families in Proposition \ref{myfamilies}, we check the conditions in Corollary \ref{inequalityweneed}. We can compute the dimension of the space of global sections using the fact that $h^*\mc{O}_{OG}(d)\otimes \mc{O}_{\Sigma}(-1)=h^*\mc{O}_{OG}(d-1)\otimes \pi^*(\bigwedge^{k-1} S_{k-1}^*))$ and $\pi_*(h^*\mc{O}_{OG}(d)\otimes \mc{O}_{\Sigma}(-1))=Sym^{d-1}((S_{k+1}/S_{k-1})^*)\otimes (\bigwedge^{k-1}S_{k-1}^*)^{d}$. We have:
	\begin{enumerate}
		\item For families in $(1)$, the conditions are:
			$$(k+i) d^2 \le n+1 - 2k -2 -i$$
			for $i=0,\ldots,k-1$.
		\item For family $(2)$:
			$$3kd^2-d^2-d\le n+1-8k-4$$
	\end{enumerate}
	It is easy to see that $(2)$ implies the other conditions. Thus under Assumption \ref{rsshyp}, we have the desired families.\\
	We still need to check condition $(1)$ from definition \ref{twistabledef}. This is done in the following lemma.
\end{proof}
\begin{lem}
	Let $(\pi:\Sigma\rightarrow \pone,h:\Sigma\rightarrow X,\sigma:\pone\rightarrow \Sigma)$ be any of the three families in Proposition \ref{twistablesinx}. Let $\zeta=h\circ \sigma$. We can chose the families such that $\zeta$ is free (i.e. $\zeta^*T_X$ is globally generated).
\end{lem}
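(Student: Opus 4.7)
The aim is to show that $\zeta^*T_X$ is globally generated on $\pone$. Since every vector bundle on $\pone$ splits as a sum of line bundles, this is equivalent to $H^1(\pone, \zeta^*T_X(-1)) = 0$. My plan is to pull back the normal bundle sequence of $X \subset OG$ via $\zeta$ and twist by $\mathcal{O}(-1)$, obtaining
$$0 \to \zeta^*T_X(-1) \to \zeta^*T_{OG}(-1) \to \zeta^*\mathcal{O}_{OG}(d-1) \to 0,$$
so the desired vanishing reduces to two conditions: (i) $H^1(\pone, \zeta^*T_{OG}(-1)) = 0$, and (ii) the induced restriction map on global sections is surjective for an appropriate choice of $X$.

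For (i), since $OG$ is a projective homogeneous space, $T_{OG}$ is globally generated, hence its pullback to $\pone$ splits as a sum $\bigoplus \mathcal{O}(b_j)$ with all $b_j \geq 0$, which immediately gives $H^1(\zeta^*T_{OG}(-1)) = 0$. If sharper positivity is needed for a cleaner argument, one can compute $\zeta^*T_{OG}$ explicitly from the exact sequence $0 \to T_{OG} \to T_{Gr}|_{OG} \to \mathrm{Sym}^2(\mathcal{S}_k^*) \to 0$ and the explicit bundles $S_{k-1} \subset S_k \subset S_{k+1}$ of Proposition \ref{myfamilies}.

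For (ii), I would adapt the construction of Corollary \ref{inequalityweneed} and Lemma \ref{descriptionofderivativemap}, replacing the surface $\Sigma$ by the section curve $\sigma(\pone)$. Concretely, one produces a constant subbundle $W' \otimes \mathcal{O}_{\pone} \hookrightarrow \zeta^*T_{OG}(-1)$ of large rank from the kernel of $V^* \to H^0(\pone, S_{k+1}^*)$, identifies the restriction of the universal derivative map to this subbundle as a composition $e \otimes \rho^{k-1} \otimes r_{d-1}$ (evaluation times wedge times restriction) exactly as in Lemma \ref{descriptionofderivativemap}, and then picks the defining equation of $X$ in the form $F = \sum_i x_i \otimes \bar b_i \otimes \bar a_i$ where the $\bar a_i, \bar b_i$ lift a basis of $H^0(\pone, \zeta^*\mathcal{O}_{OG}(d-1))$ and the $x_i \in W'^*$ are linearly independent. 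This forces the required surjectivity.

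The main obstacle is that the hypersurface $X$ chosen in Proposition \ref{twistablesinx} is already constrained to make $D^L_X$ surjective on $\Sigma$, and now we must impose the additional surjectivity on $\sigma(\pone)$. Both are open conditions cut out by linear maps with bounded-dimensional targets inside the space $W^d \subset V_d$ of degree-$d$ forms vanishing on $\Sigma$. The inequality of Assumption \ref{rsshyp}, namely $n+1-8k-4 \geq 3kd^2-d^2-d$, is already calibrated with room to spare (it dominates the analogous bounds for all the families in Proposition \ref{myfamilies}), so a dimension count shows that the two open conditions intersect nontrivially in $W^d$. A general $X$ containing $\Sigma$ then satisfies both, establishing freeness of $\zeta$ in $X$ and thereby verifying condition $(1)$ of Definition \ref{twistabledef}.
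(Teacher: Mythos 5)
Your skeleton matches the paper's: both arguments reduce freeness of $\zeta$ to $H^1(\pone,\zeta^*T_X\otimes\mc{O}(-1))=0$, pass through the normal bundle sequence of $X\subset OG$ restricted to $\zeta$, use that $\zeta^*T_{OG}$ is globally generated so $H^1(\pone,\zeta^*T_{OG}(-1))=0$, and thereby reduce everything to surjectivity of $H^0(\pone,\zeta^*T_{OG}(-1))\rightarrow H^0(\pone,\zeta^*N_{X/OG}\otimes\mc{O}(-1))$. (Small slip: that quotient is $\zeta^*\mc{O}_{OG}(d)\otimes\mc{O}_{\pone}(-1)$, of degree $d\cdot\deg\zeta-1$, not $\zeta^*\mc{O}_{OG}(d-1)$.) Where you genuinely diverge is in how this surjectivity is obtained. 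You propose to impose it as a second, independent condition on $X$ by re-running the construction of Lemma \ref{descriptionofderivativemap} and Corollary \ref{inequalityweneed} along the section curve, and then to choose one $X$ satisfying both the surface-level and the curve-level conditions. The paper never imposes a second condition: it observes that the twist $h^*N_{X/OG}\otimes L\otimes\mc{O}(-\sigma(\pone))$ (with $L\cong\mc{O}_{\Sigma}(-1)$ as in Lemma \ref{descoftwistinglb}) is explicitly $\mc{O}_{\Sigma}(d-2)\otimes\pi^*\mc{O}(d(k+i-1)+2)$, whose $H^1$ vanishes, so the restriction $H^0(\Sigma,h^*N_{X/OG}\otimes L)\rightarrow H^0(\pone,\sigma^*(h^*N_{X/OG}\otimes L))$ is surjective; composing with the surjection $H^0(\Sigma,h^*T_{OG}\otimes L)\rightarrow H^0(\Sigma,h^*N_{X/OG}\otimes L)$ already guaranteed by Corollary \ref{inequalityweneed} and factoring through $H^0(\pone,\zeta^*T_{OG}(-1))$ gives exactly the needed surjectivity, for the very same $X$ and with no new inequality.

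Your route can be completed, but as written its decisive step is asserted rather than proved. The nontrivial point is nonemptiness of the curve-level condition inside the space $W^d$ of degree-$d$ forms vanishing on all of $\Sigma$ (Remark \ref{linearvanishing}), not merely on $\sigma(\pone)$: you need $\dim H^0(\pone,\zeta^*\mc{O}_{OG}(d)\otimes\mc{O}(-1))\le\dim W$ for the constant subbundle $W\otimes\mc{O}$ of Lemma \ref{constantsubbundle} restricted to the section, together with the surjectivity of the relevant restriction and multiplication maps on $\pone$ that produce the lifts $\bar a_i,\bar b_i$. These facts are true (the curve-level count $d\cdot\deg\zeta$ is much smaller than the surface-level count controlled by Assumption \ref{rsshyp}), but they are the whole content of your step (ii) and do not follow merely from the surface inequality having ``room to spare''; you must do the computation for each family of Proposition \ref{myfamilies}. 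Also, once each condition is a nonempty open subset of the linear (hence irreducible) space $W^d$, they intersect automatically; no dimension count for the intersection is needed, and as stated that part of your argument is not meaningful. The paper's restriction trick sidesteps all of this extra bookkeeping, which is what it buys over your approach.
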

\begin{proof} Let $L_i$ be as in Lemma \ref{descoftwistinglb}. In particular, $L_i\cong \mc{O}_{\Sigma}(-1)$. We will consider the case for the twistable families in $(1)$, the proof for family $(2)$ is identical. We have a short exact sequence on $\Sigma$:
	$$0\rightarrow h^*N_{X/OG}\otimes L\otimes \mc{O}(-\sigma(\pone))\rightarrow h^*N_{X/OG}\otimes L \rightarrow h^*N_{X/OG}\otimes L\vert_{\sigma(\pone)}\rightarrow 0 $$
	An easy calculation shows: $h^*N_{X/OG}\otimes L\otimes \mc{O}(-\sigma(\pone))=\mc{O}_{\Sigma}(d-2)\otimes\pi^*\mc{O}(d(k+i-1)+2)$ for the $i$-th family in $(1)$. In particular, we have: $h^1(\Sigma,h^*N_{X/OG}\otimes L\otimes \mc{O}(-\sigma(\pone)))=0$ in all cases. Thus the map $H^0(\Sigma,h^*N_{X/OG}\otimes L)\rightarrow H^0(\pone,\sigma^*(h^*N_{X/OG}\otimes L))$ is surjective. Moreover, by the construction in Corollary \ref{inequalityweneed}, we also have that
	$H^0(\Sigma,h^*T_{OG}\otimes L)\rightarrow H^0(\Sigma,h^*N_{X/OG}\otimes L)$ is surjective. Consequently we get: $H^0(\pone, \zeta^*T_{OG}\otimes L)\rightarrow H^0(\pone,\sigma^*(h^*N_{X/OG}\otimes L))$ is surjective. Since $X$ is non-singular along $\zeta(\pone)$ (Remark \ref{nonsingularalongsigma}), we have a short exact sequence:
	$$0\rightarrow \zeta^*T_X\rightarrow \zeta^*T_{OG}\rightarrow \zeta^*N_{X/OG}\rightarrow 0$$
	Now observe that: $\sigma^*L=\mc{O}(-1)$. Thus twisting the above sequence by $\sigma^*L$ and taking the long exact sequence in cohomology, we get: $H^1(\pone,\zeta^*T_X\otimes \mc{O}(-1))=0$ ($\zeta^*T_{OG}$ is globally generated since all rational curves in $OG$ are free). Therefore, $\zeta^*T_X$ is globally generated. This finishes the proof of Proposition \ref{twistablesinx}.
\end{proof}

\begin{cor}\label{allkindsofcurves}
	There is a twistable curve in $X$ of degree $e$ for all $e \ge k$.
\end{cor}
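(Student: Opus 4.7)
The plan is to combine the finite list of twistable curves produced by Proposition \ref{twistablesinx} using the additivity machinery of Corollary \ref{addtwistables}, together with a short numerical-semigroup argument.

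First, I would observe that for each $i=0,\ldots,k-1$, the family underlying the degree $k+i$ twistable curve $\zeta_i$ has $S_{k+1}/S_{k-1}\cong\mc{O}(-1)^{\oplus 2}$, so the corresponding surface $\Sigma_i=\mb{P}(S_{k+1}/S_{k-1})$ is isomorphic to $\pone\times\pone$. Applying the lemma immediately after Definition \ref{twistingsurfacedefn} (which converts a twistable curve with $\Sigma\cong\pone\times\pone$ into a $1$-twisting surface) and Remark \ref{classoftwistingsurface}, this yields $1$-twisting surfaces $f_i:\Sigma_i\cong\pone\times\pone\to X$ of class $(\beta,(k+i)\beta)$, where $\beta=[L]$ is the class of a line in $X$.

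Next comes the combinatorial step: every integer $e\ge k$ admits a representation
$$e=\sum_{i=0}^{k-1} a_i(k+i), \qquad a_i\in\mb{Z}_{\ge 0}.$$
I would prove this by strong induction on $e$. For $k\le e\le 2k-1$ the single generator $e=k+(e-k)$ itself works. For $e\ge 2k$ the inductive hypothesis applied to $e-k\ge k$ supplies an expression for $e-k$, and increasing $a_0$ by one yields an expression for $e$.

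Finally, Theorem \ref{irredlines} ensures that the space of lines through a general point of $X$ is irreducible (Assumption \ref{rsshyp} implies Assumption \ref{hyp1}, which is the standing hypothesis of that theorem), providing the hypothesis needed to invoke Corollary \ref{addtwistables}. Iterating that corollary on the surfaces $\Sigma_i$ with multiplicities $a_i$ coming from the combinatorial step produces, for each $e\ge k$, a $1$-twisting surface $f:\Sigma\cong\pone\times\pone\to X$ of class $(\beta,e\beta)$. The divisor $F'$ (a fiber of the projection complementary to $\pi$) then serves as a section $\sigma$, and $f\circ\sigma:\pone\to X$ is a degree $e$ curve; reading the proof of the twistable-curve/twisting-surface lemma in reverse, its twistability conditions follow from $(f,\pi)$ being finite, $f^*T_X$ being globally generated, and $N_{(f,\pi)}(-F'-F)$ having vanishing $H^1$, which are precisely the data packaged inside the $1$-twisting surface just obtained. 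The main obstacle I anticipate is keeping track that the iteration of Corollary \ref{addtwistables} preserves the $\pone\times\pone$ structure and the form of the class $(\beta,\cdot)$, but this is exactly what Lemma \ref{deformtwistable} and Lemma \ref{uniontwistable} provide and what Corollary \ref{addtwistables} already packages into a single step.
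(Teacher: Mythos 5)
Your argument is correct and is essentially the paper's own proof: the paper likewise takes the twistable curves of degrees $k,k+1,\ldots,2k-1$ from Proposition \ref{twistablesinx} and combines them via Corollary \ref{addtwistables} to reach every degree $e\ge k$. The only difference is that you spell out what the paper leaves implicit --- the numerical-semigroup step (where in fact two generators $k$ and $k+r$, $e=(q-1)k+(k+r)$, already suffice) and the translation of the resulting $1$-twisting surface with $\Sigma\cong\pone\times\pone$ back into a twistable curve --- and both of these added details check out.
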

\begin{proof}
	The curves in family $(1)$ in Proposition \ref{twistablesinx} are of degrees $k,k+1,\ldots,k+(k-1)$. The statement then follows from Lemma \ref{addtwistables}.
\end{proof}


\section{Chains of Rational Lines}\label{chains}

\begin{notn}\label{notation}
	$OG=OG(k,n+1)$, $X\subset OG$ a general degree $d$ hypersurface. $\mbar(X,\tau)$ the space of two pointed $(k+1)$-chains of lines in $X$ as defined in Lemma \ref{stablechainsdefn}. $\mbar_{0,2}(X,k+1)$, the space of two pointed degree $(k+1)$ rational curves in $X$. 
\end{notn}
\begin{lem}\label{chainsinog}
	For a general pair of points $x=(x_1,x_2)\in OG$, let $M^{\tau}_{OG,x}$ denote the fiber of $\mbar(OG,\tau) \rightarrow OG\times OG$ over $x$. Then $M^{\tau}_{OG,x}$ is isomorphic to $Fl(1,2,\ldots,k-1;W)\times H$ where $H$ is a degree $2$, non-singular hypersurface in a projective space of dimension $n-2k$ and $Fl(1,2,\ldots,k-1;W)$ is the full flag variety of chain of subspaces of a $k$-dimensional isotropic subspace $W$.
\end{lem}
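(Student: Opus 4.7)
My plan is to construct an explicit isomorphism $\varphi \colon Fl(1,\ldots,k-1;W) \times H \to M^\tau_{OG,x}$ and exhibit its inverse. The generality of $x = (x_1,x_2)$ ensures the corresponding isotropic $k$-planes $W := W_{x_1}$ and $W_2 := W_{x_2} \subset V$ satisfy $W_1 \cap W_2 = 0$ with $Q \colon W_1 \times W_2 \to \mathbb{C}$ a perfect pairing, yielding an orthogonal decomposition $V = W_1 \oplus W_2 \oplus V_0$ in which $V_0 := (W_1 + W_2)^\perp$ carries a non-degenerate form of rank $n+1-2k$, and $H = \{Q|_{V_0} = 0\} \subset \mathbb{P}(V_0) \cong \mathbb{P}^{n-2k}$. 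Given a complete flag $F_\bullet = (W = F_0 \supsetneq F_1 \supsetneq \cdots \supsetneq F_k = 0)$ and $[w_1] \in H$, define the dual increasing flag $G_j := F_j^{\perp_{W_2}} = \{w \in W_2 : Q(w, F_j) = 0\}$ of dimension $j$, and set $W^{(0)} := W_1$, $W^{(k+1)} := W_2$, and
\[
W^{(i)} := F_i + G_{i-1} + \langle w_1 \rangle \qquad (1 \le i \le k).
\]
A direct check using $Q(F_j, G_{j'}) = 0$ (for $j \le j'$) and $Q(V_0, W_1 + W_2) = 0$ shows each $W^{(i)}$ is an isotropic $k$-plane, $\dim(W^{(i-1)} \cap W^{(i)}) = k-1$, and $W^{(i-1)} + W^{(i)} = F_{i-1} + G_{i-1} + \langle w_1 \rangle$ is isotropic of dimension $k+1$, so this defines a chain of lines and hence a morphism $\varphi$.

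For the inverse, given any chain $(W^{(i)})$, I would extract $F_i := W_1 \cap W^{(i)}$ and take $w_1$ to be the $V_0$-component of any $v_1 \in W^{(1)} \setminus W_1$ (well-defined up to scalar). Set $M := \sum_i W^{(i)}$. Since consecutive $W^{(i)}$'s share $k-1$ dimensions, $\dim M \le 2k+1$; the case $\dim M = 2k$ would place every $W^{(i)}$ inside the hyperbolic space $W_1 \oplus W_2$, which admits no isotropic $(k+1)$-subspace, contradicting the isotropy of $W^{(i-1)} + W^{(i)}$. Hence $\dim M = 2k+1$, so $M = W_1 + W_2 + \langle w_1 \rangle$ with $w_1 \in V_0$ determined up to scalar, and isotropy of $W^{(1)} \subset W_1^\perp = W_1 \oplus V_0$ yields $Q(w_1, w_1) = 0$, placing $[w_1]$ in $H$. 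The containment $W^{(1)} \subset W_1 + V_0$ combined with isotropy of $W_1 + W^{(1)}$ at dimension $k+1$ gives $\dim F_1 = k-1$, and symmetrically $\dim F_k = 0$. The monotonicity $\dim F_{i+1} \ge \dim F_i - 1$ (from $W^{(i)} \cap W^{(i+1)}$ a hyperplane of $W^{(i)}$) then forces $\dim F_i = k - i$ throughout, so $F_\bullet$ is a genuine full flag. A symmetric analysis gives $\dim(W_2 \cap W^{(i)}) = i - 1$, and the relation $Q(F_{i-1}, W_2 \cap W^{(i)}) = 0$ (from isotropy of $W^{(i-1)} + W^{(i)}$) identifies this space with $G_{i-1} = F_{i-1}^{\perp_{W_2}}$ by dimension match.

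The core step is the \emph{no-twisting} claim: $W^{(i)} = F_i + G_{i-1} + \langle w_1 \rangle$ for every chain, making the chain coincide with $\varphi(F_\bullet, [w_1])$. Pick any $v_i \in W^{(i)}$ whose $V_0$-component is $w_1$, and decompose $v_i = a_i + b_i + w_1$ with $a_i \in W_1$, $b_i \in W_2$. Isotropy of $W^{(i-1)} + W^{(i)}$ applied against $F_{i-1} \subset W^{(i-1)}$ reduces to $Q(F_{i-1}, b_i) = 0$ (the $a_i$ and $w_1$ contributions vanish because $F_{i-1}$ is in the isotropic $W_1$ and orthogonal to $V_0$), hence $b_i \in F_{i-1}^{\perp_{W_2}} = G_{i-1}$. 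Dually, isotropy of $W^{(i)} + W^{(i+1)}$ applied against $G_i \subset W^{(i+1)}$ reduces to $Q(a_i, G_i) = 0$, so $a_i \in G_i^{\perp_{W_1}} = F_i$ by biduality of the non-degenerate pairing $W_1 \times W_2 \to \mathbb{C}$. Both $a_i$ and $b_i$ therefore already lie in $W^{(i)}$, so $w_1 = v_i - a_i - b_i \in W^{(i)}$, as required.

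This exhibits $\varphi$ as a bijection of $\mathbb{C}$-points between irreducible projective varieties of dimension $\binom{k}{2} + (n - 2k - 1)$. To promote $\varphi$ to an isomorphism of schemes, I would invoke equivariance under the stabilizer $G := GL(W_1) \times SO(V_0) \subset SO(V)$ of the pair $(W_1, W_2)$: this group acts transitively on $Fl(1,\ldots,k-1;W) \times H$ with stabilizer a Borel $B \subset GL(W_1)$ times the parabolic $P \subset SO(V_0)$ fixing $[w_1]$, and $\varphi$ is $G$-equivariant, so the bijection realizes $M^\tau_{OG, x}$ as the homogeneous space $G/(B \times P) = Fl(1,\ldots,k-1;W) \times H$, which is automatically smooth. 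The principal obstacle is the no-twisting step: naive dimension counting still permits chains with nontrivial $a_i$ or $b_i$, and ruling them out genuinely requires the coordinated use of isotropy at both neighbors of $W^{(i)}$ together with biduality of the pairing $W_1 \times W_2 \to \mathbb{C}$.
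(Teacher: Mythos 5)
Your proposal is correct and follows essentially the same route as the paper: the same generality condition and decomposition $V=W_1\oplus W_2\oplus U$, the same dimension bookkeeping $\dim(W_1\cap W^{(i)})=k-i$, $\dim(W_2\cap W^{(i)})=i-1$, the dual flag in $W_2$, and the splitting $W^{(i)}=F_i\oplus G_{i-1}\oplus\langle w_1\rangle$ (the paper checks this ``no-twisting'' explicitly only for $i=1$ and declares the remaining cases easy, whereas you treat all $i$ uniformly and also spell out the forward morphism and the scheme-theoretic upgrade via equivariance). Two cosmetic points: the orthogonality $Q(F_j,G_{j'})=0$ holds for $j\ge j'$, not $j\le j'$ (the instances you actually use are of the former kind), and instead of picking $v_i$ with $V_0$-component exactly $w_1$ (which tacitly assumes $W^{(i)}\not\subset W_1\oplus W_2$), apply the same two orthogonality computations to an arbitrary $v_i\in W^{(i)}\subset M=W_1\oplus W_2\oplus\langle w_1\rangle$ to get the containment $W^{(i)}\subseteq F_i\oplus G_{i-1}\oplus\langle w_1\rangle$ and then conclude by comparing dimensions.
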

\begin{proof}
	For $i=1,2$ let $x_i\in OG=OG(k,V)$ correspond to isotropic subspaces $W_i\subset V$. For this proof generality will mean the following:
	$$W_1^{\perp}\cap W_2 = W_2^{\perp}\cap W_1=\emptyset$$
	Let $U=W_1^{\perp}\cap W_2^{\perp}$; we have $\dim U=n+1-2k$. The generality condition implies:
	$V=W_1\oplus W_2 \oplus U$. The non-degenerate bilinear form on $V$ induces a non-degenerate bilinear form on $U$. Let $\sigma \in ev^{-1}((x_1,x_2))$ be a length $k+1$ chain of lines connecting $x_1$ and $x_2$. $\sigma$ is given by the datum of $(k+2)$ points $x_1=p_0,p_1,\ldots,p_{k+1}=x_2$ such that there is a line connecting each consecutive pair of points. In other words, we have $k$-dimensional isotropic subspaces $W_1=V_0,V_1,\ldots,V_{k+1}=W_2$ such that $V_i\cap V_{i+1}$ is $(k-1)$-dimensional and $V_i+V_{i+1}$ is $(k+1)$-dimensional and isotropic for all $i=0,
	\ldots, k$.\\
	Consider the integer pairs $(a_i,b_i)=(\dim W_1\cap V_i, \dim W_2\cap V_i)$ for $i=0,\ldots k+1$. We know: $(a_0,b_0)=(k,0)$ and $(a_{k+1},b_{k+1})=(0,k)$. It is easy to see that $|a_i-a_{i+1}| \le 1$ and $|b_i-b_{i+1}|\le 1$. Moreover we must have $(a_1,b_1)=(k-1,0)$ since $W_2\cap W_1^{\perp}=\emptyset$ and $(a_{k},b_{k})=(0,k-1)$ since $W_1\cap W_2^{\perp}=\emptyset$. Hence we must have $(a_i,b_i)=(k-i,i-1)$ for $1\le i \le k$. Let $V_{i,1}=W_1\cap V_i$ and $V_{i,2}=W_2\cap V_i$. Observe that, from the generality condition, the induced map $\phi_i:W_2 \rightarrow V_{i,1}^*$ is surjective and hence $\dim(\ker \phi_i)=i$. Since we must have $V_{i+1,2}\subset \ker \phi_i$, $V_{i+1,2}=\ker \phi_i$.\\
	Additionally, consider $U_i=U\cap V_i$. Since $V=W_1\oplus W_2 \oplus U$, $\dim U_i \le 1$ for all $i=1,\ldots,k$. Consider a vector $v^{\prime}\in V_1 \setminus V_{1,1}$. Since $v^{\prime}\in W_1^{\perp}$, $v^{\prime}=w_1+u$ for $w_1\in W_1, u\in U$. Let $w_2\in V_{2,2}$. By the above discussion, $w_2\in V_{1,1}^{\perp}$. But we also have $0=Q(w_2,w_1+u)=Q(w_2,w_1)$. Since $w_2\notin W_1^{\perp}$, this can only happen if $w_1\in V_{1,1}$. Thus, $u\in V_1$ and $\dim U_1=1$. It is easy to see that $U_k=U_{k-1}=\ldots=U_1$ and $V_i=V_{i,1}\oplus V_{i,2}\oplus U_1$ (for $1\le i\le k$). Therefore, $\sigma$ is determined by the $1$-dimensional, isotropic subspace $U_1\subset U$ and the full flag $W_1=V_{0,1}\supset V_{1,1}\supset \ldots \supset V_{k+1,1}=0$. Conversely it is easy to see that this datum defines a chain of lines connecting $x_1$ and $x_2$. Thus $ev^{-1}((x_1,x_2))\cong Fl(1,2,\ldots,k-1,W_1)\times OG(1,U)$. Since $OG(1,U)$ is a degree $2$ hypersurface in $\proj{U}$, this finishes the proof.

\end{proof}
Assume $x=([W_1],[W_2])$ is a general point in $X\times X$. Let $M^{\tau}_{X,x}$ be the fiber of $ev: \mbar(X,\tau)\rightarrow X\times X$ over $x$.
\begin{lem}
	For a general $X$ and a general pair of points on $x\in X\times X$, $M^{\tau}_{X,x}$ is non-singular.
\end{lem}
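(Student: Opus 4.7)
The plan is to apply a generic-smoothness argument to an incidence correspondence relating chains and hypersurfaces. Let $V_{d,x} \subset V_d$ denote the subspace of degree $d$ forms on $OG$ vanishing at $x_1$ and $x_2$, and set
\[
I \;=\; \{\,(X,\sigma) \in \proj{V_{d,x}} \times M^{\tau}_{OG,x} : \sigma \subset X\,\},
\]
with the two projections $p_1 : I \to M^{\tau}_{OG,x}$ and $p_2 : I \to \proj{V_{d,x}}$. The fiber $p_2^{-1}([X])$ is precisely $M^{\tau}_{X,x}$, so it will suffice to show that $I$ is smooth and then invoke generic smoothness (we are in characteristic $0$).

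First I would show that $p_1$ is a projective bundle. Its fiber over a chain $\sigma$ is the projectivization of the kernel of the restriction map
\[
r_\sigma \,:\, V_{d,x} \longrightarrow H^0\bigl(\sigma, \mc{O}_\sigma(d)\bigr)\,\big/\,\bigl(\text{values at }x_1, x_2\bigr).
\]
If $r_\sigma$ is surjective for every $\sigma \in M^{\tau}_{OG,x}$, then $p_1$ has fibers of constant dimension, and since the base is smooth by Lemma \ref{chainsinog}, the total space $I$ is smooth. Using the Pl\"ucker embedding $OG \hookrightarrow \proj{\bigwedge^{k}V}$ together with the surjection $H^0(\proj{\bigwedge^{k}V}, \mc{O}(d)) \twoheadrightarrow H^0(OG, \mc{O}(d))$, the surjectivity of $r_\sigma$ reduces to a Castelnuovo--Mumford-style vanishing $H^1(\proj{\bigwedge^{k}V}, \mc{I}_\sigma(d)) = 0$ for each $\sigma$. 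Since $\sigma$ is a connected nodal curve of arithmetic genus $0$ and degree $k+1$ inside a projective space of enormous dimension $\binom{n+1}{k}-1$, this vanishing follows from standard regularity bounds, and quotienting by the two-dimensional space of values at $x_1, x_2$ preserves surjectivity.

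With $I$ smooth and $p_2$ dominant (a dimension count using Assumption \ref{rsshyp} shows $\dim I \ge \dim \proj{V_{d,x}}$, and the existence of chains of lines through two general points of $X$ is guaranteed by the twistable-curve constructions of Section \ref{sectiontwistable}), generic smoothness in characteristic $0$ yields that $M^{\tau}_{X,x} = p_2^{-1}([X])$ is smooth for general $X \in \proj{V_{d,x}}$. The irreducible total incidence $\{(X,x) \in \proj{V_d} \times OG \times OG : x \in X \times X\} \to \proj{V_d}$ allows one to translate ``general $X \in \proj{V_{d,x}}$ for fixed general $x$'' into the statement of the lemma.

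The main obstacle will be verifying the uniform surjectivity of $r_\sigma$ across \emph{all} chains $\sigma \in M^{\tau}_{OG,x}$ rather than just a generic one: degenerate chains (for instance where several lines happen to span a lower-dimensional linear subvariety of the Pl\"ucker space) could in principle cause the rank of $r_\sigma$ to drop. The explicit product description of $M^{\tau}_{OG,x}$ from Lemma \ref{chainsinog}, coupled with the generality condition $W_1^{\perp}\cap W_2 = W_2^{\perp}\cap W_1 = 0$, tightly restricts what such chains can look like, and the numerical hypotheses of Assumption \ref{rsshyp} should provide the room needed to push the regularity estimate through uniformly.
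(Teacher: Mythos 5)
Your overall architecture (incidence correspondence between chains and hypersurfaces, fibers over the chain space being linear systems, smoothness of the total space, then generic smoothness in characteristic $0$) is the same as the paper's, but the crucial step -- constancy of the dimension of the linear system of degree $d$ hypersurfaces containing a chain, as the chain varies over all of $M^{\tau}_{OG,x}$ -- is where your argument has a genuine gap. You reduce it to the uniform vanishing $H^1(\proj{\bigwedge^k V},\mc{I}_{\sigma}(d))=0$ and assert this "follows from standard regularity bounds" because the ambient dimension is enormous. Regularity bounds do not improve with ambient dimension: for a connected nodal curve of degree $k+1$ they give $(k+1)$-regularity at best, i.e.\ vanishing only for $d\gtrsim k$, whereas under Assumption \ref{rsshyp} the hypersurface degree $d$ is forced to be \emph{small} relative to $k$ (one can have $d=1,2$ with $k$ large). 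For small $d$ the vanishing genuinely fails for degenerate chains, so you would have to prove that every chain in $M^{\tau}_{OG,x}$ is non-degenerate (its $k+2$ vertices span a $\proj{}^{k+1}$ in Pl\"ucker space -- this does follow from the explicit description in Lemma \ref{chainsinog}, since the vertex Pl\"ucker vectors are wedge products of distinct subsets of a basis) and that such stick curves are projectively normal in their span, and in addition you would need the unproved surjectivity of $H^0(\proj{\bigwedge^k V},\mc{O}(d))\rightarrow H^0(OG,\mc{O}(d))$ to pass from the Pl\"ucker space to $V_d=\Gamma(OG,\mc{O}(d))$. None of this is supplied, and you yourself flag it as the main obstacle.

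The paper sidesteps the entire cohomological question with a homogeneity argument: for any two pairs of points in the open locus $U$ and any two chains over them, one can build an element of the orthogonal group (acting separately on $W_1$, on $W_2\cong W_1^*$, and on $W_1^{\perp}\cap W_2^{\perp}$) carrying one chain to the other. Since this group also acts linearly on $\proj{V_d}$, all fibers of the projection from the incidence variety $Z$ to the chain space $M_U$ are isomorphic linear subvarieties; hence $Z$ is smooth over the smooth base and generic smoothness applied to the other projection finishes the proof, with no restriction-map rank computation at all. If you want to keep your route you must carry out the non-degeneracy, projective normality of the chain, and projective normality of $OG$ steps explicitly; as written, the key claim is unjustified and the stated justification is incorrect.
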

\begin{proof}
	Let $U\subset OG\times OG$ denote the open subset satisfying the generality condition in the proof of Lemma \ref{chainsinog}. Let $M_U$ denote the inverse image of $U$ in $\mbar(OG,\tau)$. Denote by $\proj{V_d}$ the parameter space of degree $d$ hypersurfaces. Let $\mc{X}\subset U\times \proj{V_d}$ denote the locus where the points are contained in the hypersurface. Let $Z\subset M_U\times_U \mc{X}$ be the locus where the chain of lines is contained in the hypersurface. Let $\pi_1:Z\rightarrow M_U$ and $\pi_2:Z\rightarrow \mc{X}$ denote the projections. Let $(W_1,W_2)$ and $(W_1^{\prime},W_2^{\prime})$ be two pairs of points in $U$. We have:
	$$V=W_1\oplus W_2 \oplus (W_1^{\perp}\cap W_2^{\perp})=W_1^{\prime}\oplus W_2^{\prime} \oplus (W_1^{\prime\perp}\cap W_2^{\prime\perp})$$
	We can chose an isomorphism $\phi_1:W_1\rightarrow W_1^{\prime}$ sending any chain in $Fl(1,\ldots,k-1;W_1)$ to any chain $Fl(1,\ldots,k-1;W_1^{\prime})$. This induces a map $\phi_2:W_2\rightarrow W_2^{\prime}$ via the isomorphisms $W_1^*\cong W_2$ and $W_1^{\prime *}\cong W_2^{\prime}$. Moreover we can chose an isomorphism $\phi_3:W_1^{\perp}\cap W_2^{\perp}\rightarrow W_1^{\prime\perp}\cap W_2^{\prime\perp}$ that preserves the induced non-degenerate bilinear form and send an isotropic element to any element. Setting $\phi=\sum_{i=1}^3 \phi_i:V\rightarrow V$, it is easy to see that $\phi$ is in the orthogonal group. Thus any two chain of lines in $M_U$ can be mapped to each other by the orthogonal group. Therefore, the fibers of $\pi_1$ are all isomorphic (in particular they are all linear subvarieties in $\proj{V_d}$). Therefore $Z$ is non-singular. Hence a general fiber of $\pi_2$ is non-singular.
\end{proof}
Here is a useful lemma for possibly non-complete intersections:
\begin{lem}\label{noncifano}
	Let $Y$ be a Fano variety. Let $(s_i\in L_i)_{1\le i\le c}$ be sections of ample lines bundles on $Y$ such that $K_Y\otimes L_1\otimes\ldots\otimes L_c$ is anti-ample. Let $Z$ be the common zero locus of $(s_i)_{1\le i \le c}$. Assume $Z$ is non-singular. Then $Z$ is Fano as well.
\end{lem}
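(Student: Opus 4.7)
My plan is to combine adjunction with the normal bundle exact sequence, bundling the sections $s_i$ into a single section of $E := L_1\oplus\cdots\oplus L_c$ so as to handle the possibly non-complete-intersection aspect. Since both $Y$ and $Z$ are non-singular, the closed immersion $Z\hookrightarrow Y$ is automatically a regular embedding; the conormal sheaf $\mc{I}_Z/\mc{I}_Z^2 = \mc{N}_{Z/Y}^*$ is locally free of rank $c' := \codim(Z,Y)$, and adjunction gives $K_Z \cong K_Y|_Z \otimes \det \mc{N}_{Z/Y}$.

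The section $s = (s_1,\ldots,s_c)$ of $E$ has zero scheme $Z$, and the dual map $E^* \to \mc{O}_Y$ has image $\mc{I}_Z$. Restricting to $Z$ yields a surjection $E^*|_Z \twoheadrightarrow \mc{I}_Z/\mc{I}_Z^2 = \mc{N}_{Z/Y}^*$; dualizing produces a short exact sequence of vector bundles on $Z$:
$$0 \to \mc{N}_{Z/Y} \to E|_Z \to Q \to 0,$$
where $Q$ is locally free of rank $c - c' \geq 0$. Taking determinants, $\det \mc{N}_{Z/Y} = (L_1\otimes\cdots\otimes L_c)|_Z \otimes (\det Q)^{-1}$, and combining with adjunction gives
$$-K_Z \;\cong\; \bigl(-K_Y - L_1 - \cdots - L_c\bigr)\big|_Z \;+\; \det Q.$$

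The first summand is the restriction to $Z$ of an ample line bundle on $Y$ (by the hypothesis that $K_Y \otimes L_1 \otimes \cdots \otimes L_c$ is anti-ample), hence ample on $Z$. For the second, $Q$ is a quotient of the ample vector bundle $\bigoplus_i L_i|_Z$; since quotients of ample vector bundles are ample and determinants of ample vector bundles are ample (both classical facts, cf.\ Hartshorne, \emph{Ample vector bundles}), $\det Q$ is either trivial (in the complete-intersection case $c'=c$, where $Q=0$) or ample. In either case $-K_Z$ is ample, so $Z$ is Fano. The main subtlety is the non-complete-intersection case, where the naive adjunction identity $K_Z = (K_Y \otimes L_1 \otimes \cdots \otimes L_c)|_Z$ fails and one must control the extra twist $\det Q$; this is precisely what the ampleness of each individual $L_i$ (rather than merely of the product) is for.
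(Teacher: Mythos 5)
Your proof is correct and follows essentially the same route as the paper: both use the surjection $\oplus_i L_i^*|_Z \twoheadrightarrow \mc{N}_{Z/Y}^*$, dualize to get $0\to \mc{N}_{Z/Y}\to \oplus_i L_i|_Z\to Q\to 0$ (your $Q$ is the paper's $K^*$), and conclude via adjunction together with the facts that quotients of ample bundles are ample and have ample determinant. The only cosmetic difference is that you phrase the correction term as $\det Q$ rather than $\det K$ anti-ample, which is the same computation.
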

\begin{proof}
	This is obvious when $\codim(Z,Y)=c$. Assume $\codim(Z,Y)<c$. Let $N_{Z/Y}$ denote the normal bundle of $Z$ in $Y$. There is a surjective map on $Z$: $\oplus_{i=1}^c L_i^*\rightarrow N_{Z/Y}^*$. Let $K$ be the kernel of this map. $K$ is locally free. Taking dual, we get:
	$$0\rightarrow N_{Z/Y}\rightarrow \oplus_{i=1}^c L_i\rightarrow K^*\rightarrow 0$$
	$K^*$ is an ample vector bundle since $\oplus_{i=1}^c L_i$ is ample. Thus $\det K$ is anti-ample. We have: $\det(N_{Z/Y})=\otimes_{i=1}^c L_i \otimes \det K$. Since $K_Z=K_Y\otimes \det(N_{Z/Y})=K_Y\otimes L_1\otimes\ldots\otimes L_c \otimes \det K$, $K_Z$ is anti-ample and $Z$ is Fano.
\end{proof}

\begin{prop}\label{fibratiobyfanos}
	Under Assumption \ref{rsshyp}, there is a dominant map $\phi:M^{\tau}_{X,x}\rightarrow Fl(1,2,\ldots,k-1;W_1)$ whose general fiber is a Fano (specifically rationally connected) variety in $H$. In particular, $M^{\tau}_{X,x}$ is rationally connected.
\end{prop}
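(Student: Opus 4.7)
The plan is to use the first projection in the isomorphism $M^{\tau}_{OG,x}\cong Fl(1,2,\ldots,k-1;W_1)\times H$ from Lemma \ref{chainsinog}: restricting to the closed subscheme $M^{\tau}_{X,x}\subset M^{\tau}_{OG,x}$ gives the desired morphism $\phi: M^{\tau}_{X,x}\to Fl(1,\ldots,k-1;W_1)$. I would analyze a general fiber of $\phi$ as a subvariety of $H$ cut out by explicit sections of ample line bundles, then apply Lemma \ref{noncifano} to show it is Fano, and finally combine rational connectedness of the fibers with rationality of the base $Fl$ via the Graber-Harris-Starr theorem to conclude that $M^{\tau}_{X,x}$ is rationally connected.

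Fix a general complete flag $F=(V_{1,1}\supset\cdots\supset V_{k-1,1})$ in $W_1$. From the proof of Lemma \ref{chainsinog}, each $[u_1]\in H$ together with $F$ determines the chain $L_0,\ldots,L_k$ of $k+1$ lines with interior vertices $V_1,\ldots,V_k$. The condition that the chain lies in $X$ is that $f_X\vert_{L_i}\equiv 0$ for each $i$, which is $d+1$ scalar equations per line; after discarding the two automatic conditions $f_X(W_1)=f_X(W_2)=0$ and identifying the $k$ doubly-counted conditions at interior vertices, we obtain $(k+1)d-1$ net conditions on $[u_1]\in H$. Using the tautological sequences on the projectivization $\mb{P}(E_i)\to \mb{P}(U)$ of the new-direction bundle $E_i=(V_i+V_{i+1})/(V_i\cap V_{i+1})$ over $L_i$, I identify these conditions as sections of ample line bundles on $H$: for each interior line $L_i$ with $1\le i\le k-1$, $E_i$ is constant in $u_1$ so all $d+1$ coefficients of $f_X\vert_{L_i}$ are sections of $\mc{O}_H(d)$; for each boundary line $L_0$ or $L_k$, $E_i\cong\mc{O}\oplus\mc{O}(-1)$ on $\mb{P}(U)$, so $\pi_*\mc{O}_{\mb{P}(E_i)}(d)\cong\bigoplus_{j=0}^d\mc{O}_{\mb{P}(U)}(j)$, and the resulting conditions are one section each of $\mc{O}_H(0),\mc{O}_H(1),\ldots,\mc{O}_H(d)$. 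A straightforward bookkeeping of the non-trivial and non-shared contributions then shows the degrees of the defining sections sum to $kd^2$.

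Thus the fiber $\phi^{-1}(F)$ is the zero locus in $H$ of sections of ample line bundles with total degree $kd^2$. Since $K_H=\mc{O}_H(-(n-2k-1))$, Lemma \ref{noncifano} requires $kd^2<n-2k-1$, which follows from Assumption \ref{rsshyp}: the inequality $n\ge (3k-1)d^2-d+8k+3$ easily implies $n>kd^2+2k+1$ for all $k,d\ge 1$, with slack coming from the $(2k-1)d^2+6k+1-d\ge 0$ gap. Non-singularity of $\phi^{-1}(F)$ for general $F$ follows from non-singularity of $M^{\tau}_{X,x}$ established in the previous lemma together with generic smoothness, while non-emptiness with positive dimension follows from the expected fiber dimension $n-2k-(k+1)d$ being positive under the assumption, combined with iterated non-emptiness of zero loci for non-zero sections of ample line bundles on irreducible projective varieties. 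Hence $\phi$ is dominant and its general fiber is Fano, thus rationally connected.

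Finally, $Fl(1,2,\ldots,k-1;W_1)$ is rational, so $\phi$ is a proper dominant morphism of irreducible projective varieties with rationally connected geometric generic fiber and rationally connected base, and the Graber-Harris-Starr theorem then gives that $M^{\tau}_{X,x}$ is rationally connected. The main obstacle in the proof is the clean identification of the defining sections of $\phi^{-1}(F)$ as sections of specific ample line bundles on $H$ with total degree exactly $kd^2$, which requires careful separate treatment of the boundary lines $L_0,L_k$ versus the interior lines $L_1,\ldots,L_{k-1}$ of the chain.
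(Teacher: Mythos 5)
Your proposal follows essentially the same route as the paper: project $M^{\tau}_{X,x}\subset Fl(1,\ldots,k-1;W_1)\times H$ to the flag factor, identify the fiber over a fixed flag as the zero locus in $H$ of $(k+1)d-1$ sections of ample line bundles of total degree $kd^2$ (your coefficient-by-coefficient Pl\"ucker bookkeeping matches the paper's pushforward bundles $\pi_{i*}h_i^*\mc{O}_{OG}(d)$ twisted down along the shared sections), check $kd^2<n-2k-1$ from Assumption \ref{rsshyp}, apply Lemma \ref{noncifano}, and conclude with the Graber--Harris--Starr fibration argument. The argument is correct and matches the paper's proof in all essentials.
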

\begin{proof}
	Consider $M^{\tau}_{X,x} \subset M^{\tau}_{OG,x}=Fl(1,\ldots,k-1;W_1)\times H$ and look at the projection map $\phi:M^{\tau}_{X,x}\rightarrow Fl(1,2,\ldots,k-1;W_1)$. Fix a flag $F: W_{0,1}\subset W_{1,1}\subset\ldots\subset W_{k-1,1}\subset W_1$. Let $H_F$ denote $\{F\}\times H \subset M^{\tau}_{OG,x}$. Let $(h_i:\Sigma_i\rightarrow OG, \pi_i:\Sigma_{i}\rightarrow H_F,\sigma_{i,0},\sigma_{i,1})_{0\le i\le k}$ denote the universal families of lines over $H_F$. For each $i$, $X$ induces a section of $\pi_{i*}h_i^*\mc{O}_{OG}(d)$. Note that for $i=0,\ldots,k-1$: $\sigma_{i,1}=\sigma_{i+1,0}$. Moreover, proceeding inductively, for $i=0,\ldots,k-1$, we may assume $X$ already contains $h_i\circ \sigma_{i,0}(H_F)$. And for $i=k$: $X$ contains $h_i\circ \sigma_{i,0}(H_F)$ and $h_i\circ \sigma_{i,1}(H_F)$. Thus we need to compute the bundles $(\pi_{i*}h_i^*\mc{O}_{OG}(d)\otimes \mc{O}(-\sigma_{i,0}(H_F)))_{i=0,\ldots,k-1}$ and $(\pi_{k*}h_k^*\mc{O}_{OG}(d)\otimes \mc{O}(-\sigma_{k,0}(H_F)-\sigma_{k,1}(H_F))$. These can be computed similar to Lemma \ref{zerosetofvectorbundle}. Below are the descriptions:
	\begin{enumerate}
		\item For $i=0$: $Sym^{d-1}(\mc{O}_{H_F}\oplus \mc{O}_{H_F}(1))\otimes \mc{O}_{H_F}(1)$.
		\item For $i=1,\ldots,k-1$: $Sym^{d-1}(\mc{O}_{H_F}^{\oplus 2})\otimes \mc{O}_{H_F}(d)$
		\item For $i=k$: $Sym^{d-2}(\mc{O}_{H_F}(1)\otimes \mc{O}_{H_F})\otimes \mc{O}_{H_F}(1)$
	\end{enumerate}
	Thus we get $(k+1)d-1$ many equations whose vanishing locus is the fiber of $\phi$. Since $\dim H_F=n-2k-1$, the fibers of $\phi$ are non-empty and hence a general fiber is non-singular. Moreover the total degree of the equations is $kd^2 < n-2k-1=\deg (-K_{H_F})$. Therefore, from Lemma \ref{noncifano}, a general fiber of $\phi$ is Fano. From \cite[Corollary ~1.3]{ghs}, we have that $M^{\tau}_{X,x}$ is rationally connected.
\end{proof}


\section{Degree $k+1$ Rational Curves}\label{sectiondeformfromchains}

We will use the notation from Notation \ref{notation}. Suppose $\phi:\proj{}^1 \rightarrow \mbar(X,\tau)$ is a family of two pointed chains of rational lines. Furthermore, assume $\phi(\proj{}^1)$ lies in the smooth locus of $ev: \mbar(X,\tau) \rightarrow X\times X$. Let $ev_2:\mbar(X,k+1)\rightarrow X\times X$ denote the evaluation map for two pointed degree $k+1$ stable maps to $X$. Then there is a short exact sequence:
$$0\rightarrow \phi^*T_{ev}\rightarrow (\phi\circ i)^*T_{ev_2}\rightarrow \phi^*\mc{N}_i \rightarrow 0$$

In this section we will construct a $\phi$ such that $(\phi\circ i)^*T_{ev_2}$ is ample. From the above exact sequence, it is enough to construct $\phi$ such that $T_{ev}$ and the normal bundle is ample.

\subsection{Ample Relative Tangent Bundle}

Let $f:\proj{}^1\rightarrow \mbar(X,\tau)$ be a family of chains of lines such that $ev\circ f$ passes through a general point of $X\times X$. Suppose $f$ is given by the datum $(\pi_i:\Sigma_i\rightarrow \proj{}^1, h_i:\Sigma_i\rightarrow X,\sigma_{i,1},\sigma_{i,2})_{0\le i \le k}$ of $k+1$ families of lines in $X$ such that $(\sigma_{i,j})_{j=1,2}$ are non-intersecting sections of $\pi_i$ and $h_i\circ \sigma_{i,2}=h_{i+1}\circ \sigma_{i+1,1}$.    
\begin{lem}\cite[Lemma ~9.2.2]{cox1999mirror} \label{descnormalbundle}With the above notation:
	$f^*\mc{N}_i\cong \oplus_{i=0,k-1} L_i$ where 
	$$L_i\cong \mc{N}_{\sigma_{i,2}}\otimes \mc{N}_{\sigma_{i+1,1}}$$
\end{lem}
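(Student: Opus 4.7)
The plan is to identify $\mathcal{N}_i$ with the pullback of the standard ``node-smoothing'' normal bundle on the boundary of a Kontsevich space, and then read off the pullback under $f$ in terms of the data of the $k+1$ ruled surfaces $\Sigma_i$.

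First, I would recall the following deformation-theoretic fact. For any at-worst-nodal genus $0$ stable map $(C,p_1,p_2,h)$ to $X$ with a single node $q$ joining two branches $C^\prime$ and $C^{\prime\prime}$, the normal direction to the boundary stratum at that point inside $\mbar_{0,2}(X,k+1)$ is canonically identified with the one-dimensional smoothing space
\[
T_q C^\prime \otimes T_q C^{\prime\prime},
\]
and for a point of $\mbar(X,\tau)$ with $k$ distinct nodes $q_0,\ldots,q_{k-1}$ the full normal bundle $\mathcal{N}_i$ of the immersion $\mbar(X,\tau)\hookrightarrow \mbar_{0,2}(X,k+1)$ splits as the direct sum of these node-by-node contributions. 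This is exactly the content of \cite[Lemma~9.2.2]{cox1999mirror}; the point is that the nodes deform independently, so the boundary divisors $D_i$ that impose each node meet transversely and $\mathcal{N}_i=\bigoplus_{i=0}^{k-1}\mathcal{N}_{D_i}$.

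Next I would globalize this pointwise statement along $f$. Over $\mathbb{P}^1$, the $i$-th node of the chain is cut out on $\Sigma_i$ by the section $\sigma_{i,2}$ and on $\Sigma_{i+1}$ by the section $\sigma_{i+1,1}$, so the two branch tangent lines at the $i$-th node fit into line bundles on $\mathbb{P}^1$ given by
\[
\sigma_{i,2}^* T_{\pi_i} \;=\; \mathcal{N}_{\sigma_{i,2}},\qquad
\sigma_{i+1,1}^* T_{\pi_{i+1}} \;=\; \mathcal{N}_{\sigma_{i+1,1}},
\]
using that $\sigma$ is a section of the $\mathbb{P}^1$-bundle $\pi$, so the pullback of the relative tangent bundle is the normal bundle of $\sigma$ in $\Sigma$. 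Applying the local description of $\mathcal{N}_i$ fiberwise and using that the universal curve and its nodes are globally defined over $\mbar(X,\tau)$, one obtains
\[
f^*\mathcal{N}_i \;\cong\; \bigoplus_{i=0}^{k-1} \mathcal{N}_{\sigma_{i,2}}\otimes \mathcal{N}_{\sigma_{i+1,1}},
\]
which is precisely the claimed formula with $L_i=\mathcal{N}_{\sigma_{i,2}}\otimes \mathcal{N}_{\sigma_{i+1,1}}$.

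The main thing to check carefully is the identification of the pointwise smoothing line with the tensor of section-normal-bundles; after that the assertion is formal because the $k$ nodes contribute independent directions. Since the family $f$ is assumed to land in the smooth locus of $ev$, there are no issues with non-reduced structure on the boundary, and everything globalizes cleanly. The rest of the argument is just bookkeeping: one matches up the two branch tangents at the $i$-th node, namely $\mathcal{N}_{\sigma_{i,2}}$ coming from $\Sigma_i$ and $\mathcal{N}_{\sigma_{i+1,1}}$ coming from $\Sigma_{i+1}$, and reads off the direct sum decomposition.
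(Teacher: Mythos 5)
Your argument is correct and is essentially the paper's: the paper offers no independent proof, citing Cox--Katz Lemma 9.2.2 precisely for the fact that the normal bundle to the boundary stratum splits node by node into the smoothing lines $T_qC'\otimes T_qC''$, which is the content of your first step. Your globalization via $\sigma_{i,2}^*T_{\pi_i}\cong \mc{N}_{\sigma_{i,2}}$ and $\sigma_{i+1,1}^*T_{\pi_{i+1}}\cong \mc{N}_{\sigma_{i+1,1}}$ is the intended (and correct) way to read the cited statement along the family $f$.
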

Now assume $g:\proj{}^1\rightarrow M^{\tau}_{X,x} \subset Fl(1,2,\ldots,k-1;W_1)\times H$ is a family of chains of rational curves passing through two general fixed points $[W_1]$ and $[W_2]$. Then $g$ is determined by a filtration of vector bundles:
$$0=S_0\subset S_1\subset S_2\subset \ldots \subset S_{k-1}\subset W_1\otimes \mc{O}_{\proj{}^1}$$
And a line sub-bundle: $T\subset U\otimes \mc{O}_{\proj{}^1}$.
We have an easy lemma for later use:
\begin{lem}
	Let $(\pi:\Sigma\rightarrow \pone, \sigma:\pone\rightarrow \Sigma)$ be a family of $1$-pointed lines given by the trio of vector bundles $F_{k-1}\subset F_k\subset F_{k+1}$ on $\pone$ such that $\Sigma=\proj{F_{k+1}/F_{k-1}}$ and $\sigma$ is induced by $F_k/F_{k-1}\subset F_{k+1}/F_{k-1}$. Then the normal bundle of $\sigma(\pone)$: $N_{\sigma}=\mc{O}_{\pone}(\deg F_{k+1}+\deg F_{k-1} - 2\deg F_{k})$.
\end{lem}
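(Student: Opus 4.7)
This is a short degree calculation relying on the projective bundle structure of $\Sigma$; the formula for $N_\sigma$ itself is essentially the one already quoted in the Proposition following Lemma~6.1. The plan is to identify $\sigma^* T_\pi$ in terms of the building blocks $L := F_k/F_{k-1}$ and $M := F_{k+1}/F_k$, and then take degrees.

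First I would set $E := F_{k+1}/F_{k-1}$, a rank $2$ vector bundle on $\pone$, so there is a tautological short exact sequence
$$0 \to L \to E \to M \to 0$$
on $\pone$, and by hypothesis $\Sigma = \mb{P}(E)$ with $\sigma$ the section corresponding to the line subbundle $L \subset E$. Since $\sigma$ is a section of the smooth morphism $\pi$, its normal bundle agrees with the restricted relative tangent bundle, $N_\sigma \cong \sigma^* T_\pi$.

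Next I would invoke (or re-derive) the standard formula for a projectivized rank-$2$ bundle: fiberwise, the relative tangent space of $\pi$ at a point $[L_b \subset E_b]$ is $\mathrm{Hom}(L_b, E_b/L_b) = L_b^{-1} \otimes (E_b/L_b)$, so globally $\sigma^* T_\pi \cong L^{-1} \otimes M$. (This can also be obtained directly from the relative Euler sequence on $\mb{P}(E)$, and it is exactly the formula for $N_\sigma$ used in Proposition~3.4 of \cite{twistingfamilies} that was cited earlier.)

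Finally, I would compute
$$\deg N_\sigma = \deg M - \deg L = (\deg F_{k+1} - \deg F_k) - (\deg F_k - \deg F_{k-1}) = \deg F_{k+1} + \deg F_{k-1} - 2\deg F_k,$$
and conclude by using that every line bundle on $\pone$ is isomorphic to $\mc{O}_{\pone}(d)$ for its degree $d$. There is no real obstacle here; the only point of care is the convention ($\mb{P}(E)$ as parameterizing line subbundles of $E$), which is already dictated by the statement via the subbundle $F_k/F_{k-1} \subset F_{k+1}/F_{k-1}$.
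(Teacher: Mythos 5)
Your proof is correct and is essentially the intended argument: the paper itself states this lemma without proof, treating it as immediate from the identification $N_{\sigma}\cong (S_{k+1}/S_k)\otimes (S_k/S_{k-1})^{*}$ already quoted from \cite{twistingfamilies}, and your derivation via $\sigma^{*}T_{\pi}\cong \mathrm{Hom}(L,E/L)=L^{-1}\otimes M$ followed by the degree count is exactly that computation.
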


\begin{lem}\label{multsofnormalbundle}
	In the above situation and with the notation from Lemma \ref{descnormalbundle}, $L_i\cong \mc{O}(a_i)$ where:
	\begin{enumerate}
		\item $a_0=-\deg T + \deg S_{k-2} - \deg S_{k-1}$
		\item For $1\le i \le k-2$, $a_i=\deg S_{k-i+1}-\deg S_{k-i} - \deg S_{k-i-1}+\deg S_{k-i-2}$
		\item $a_{k-1}=-\deg T +\deg S_2 - \deg S_1$
	\end{enumerate}
\end{lem}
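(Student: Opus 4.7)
The plan is to identify, for each of the $k+1$ lines $\ell_i$ in the chain, the filtration of vector bundles $F^{(i)}_{k-1}\subset F^{(i)}_k\subset F^{(i)}_{k+1}$ on $\pone$ that defines the family $\pi_i:\Sigma_i\to\pone$ and the two sections $\sigma_{i,1},\sigma_{i,2}$. Given such a filtration, the auxiliary lemma stated just before the present one computes the degree of the normal bundle of a section as $\deg F^{(i)}_{k+1}+\deg F^{(i)}_{k-1}-2\deg F^{(i)}_k$. Summing the appropriate pair of these via Lemma \ref{descnormalbundle} then yields $a_i$.

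For the identification I would revisit the proof of Lemma \ref{chainsinog}. In the family over $\pone$ parametrised by $(S_j)_{0\le j\le k-1}$ and $T$, the chain of isotropic subspaces is $V_i=V_{i,1}\oplus V_{i,2}\oplus T$ for $1\le i\le k$, with the conventions $V_0=W_1$, $V_{k+1}=W_2$, $V_{i,1}=S_{k-i}$ (setting $S_k:=W_1\otimes\mc{O}_{\pone}$ and $S_0:=0$), and $V_{i,2}=\ker(W_2\otimes\mc{O}_{\pone}\twoheadrightarrow S^*_{k-i+1})$, where the surjection comes from the pairing identification $W_2\cong W^*_1$. The short exact sequence defining $V_{i,2}$ immediately gives $\deg V_{i,2}=\deg S_{k-i+1}$. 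Since $V_{i,1}$, $V_{i,2}$ and $T$ are contained in the complementary summands $W_1, W_2, U$ of $V=W_1\oplus W_2\oplus U$, the decomposition of $V_i$ is actually a direct sum of subbundles of $V\otimes\mc{O}_{\pone}$, and one obtains similarly $V_i\cap V_{i+1}=S_{k-i-1}\oplus V_{i,2}\oplus T$ and $V_i+V_{i+1}=S_{k-i}\oplus V_{i+1,2}\oplus T$ for $1\le i\le k-1$.

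Plugging these into the normal bundle formula gives, for $1\le i\le k-1$, $\deg\mc{N}_{\sigma_{i,1}}=\deg S_{k-i-1}-\deg S_{k-i+1}$ and $\deg\mc{N}_{\sigma_{i,2}}=\deg S_{k-i+1}-\deg S_{k-i-1}$ (the $S_{k-i}$ and $T$ contributions cancel). At the boundaries, $V_0=W_1$ and $V_{k+1}=W_2$ are trivial, so one computes directly $V_0\cap V_1=S_{k-1}$, $V_0+V_1=W_1\oplus T$, $V_k\cap V_{k+1}=V_{k,2}$, $V_k+V_{k+1}=W_2\oplus T$, which give $\deg\mc{N}_{\sigma_{0,2}}=-\deg S_{k-1}-\deg T$ and $\deg\mc{N}_{\sigma_{k,1}}=-\deg S_1-\deg T$. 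Combining via $a_i=\deg\mc{N}_{\sigma_{i,2}}+\deg\mc{N}_{\sigma_{i+1,1}}$ recovers the three stated formulas, with the $-\deg T$ terms appearing only at the two endpoint nodes $i=0$ and $i=k-1$.

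The main obstacle is the boundary bookkeeping: at the interior nodes the $S_{k-i}$ and $T$ contributions cancel cleanly, leaving a four-term telescoping expression, but the endpoint contributions involving $\deg T$ arise precisely because $V_0$ and $V_{k+1}$ lack a $T$-summand, so the interior formula cannot be applied uniformly and the two endpoint cases must be handled separately. Beyond that the argument reduces to degree arithmetic on line bundles over $\pone$.
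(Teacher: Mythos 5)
Your proposal is correct and follows essentially the same route as the paper: read off from the construction in Lemma \ref{chainsinog} the trio of bundles defining each family $\Sigma_i$ and its two sections (the paper writes your $V_{i,2}$ as $E_{i-1}=\ker(W_2\to S_{k-i+1}^*)$ with $\deg E_{i-1}=\deg S_{k-i+1}$), apply the normal-bundle degree formula, and add via Lemma \ref{descnormalbundle}, treating the two endpoint nodes separately exactly as the paper does.
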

\begin{proof}
	Let $E_i$ denote the kernel to the map $W_2\rightarrow S_{k-i}^*$. Then $\deg E_i=\deg S_{k-i}$.\\
	$i=0$ : By construction in Lemma \ref{chainsinog}, the family $\Sigma_0\rightarrow \pone$ with section $\sigma_{0,2}$ is given by $S_{k-1}\subset S_{k-1}\oplus T\subset W_1\oplus T$ and $\Sigma_1\rightarrow \pone$ with section $\sigma_{1,1}$ is given by $S_{k-2}\oplus T\subset S_{k-1}\oplus T\subset S_{k-1}\oplus T\oplus E_1$. Thus, $\deg \mc{N}_{\sigma_{0,2}}=\deg W_1 + \deg T + \deg S_{k-1}-2\deg S_{k-1} - 2\deg T=-\deg T-\deg S_{k-1}$ and $\deg \mc{N}_{\sigma_{1,1}}=\deg S_{k-1}+\deg T+\deg E_1+\deg S_{k-2}+\deg T-2\deg S_{k-1}-2\deg T=\deg S_{k-2}$. Adding them, we get the result.\\
	$1\le i \le k-2$ : Similarly, ($\Sigma_{i}\rightarrow \pone,\sigma_{i,2}$) is given by $S_{k-i-1}\oplus T\oplus E_{i-1}\subset S_{k-i-1}\oplus T \oplus E_i\subset S_{k-i}\oplus T \oplus E_i$ and ($\Sigma_{i+1}\rightarrow \pone,\sigma_{i+1,1}$) is given by $S_{k-i-2}\oplus T\oplus E_{i}\subset S_{k-i-1}\oplus T \oplus E_i\subset S_{k-i-1}\oplus T \oplus E_{i+1}$. Thus:\\
	$\deg \mc{N}_{\sigma_{i,2}}=\deg S_{k-i}+\deg T+\deg E_i+\deg S_{k-i-1}+\deg T+\deg E_{i-1}-2(\deg S_{k-i-1}+\deg T+\deg E_i)=-\deg S_{k-i-1}+\deg S_{k-i+1}$ and\\
	$\deg \mc{N}_{\sigma_{i+1,1}}=\deg S_{k-i-1}+\deg T+\deg E_{i+1}+\deg S_{k-i-2}+\deg T+\deg E_i-2(\deg S_{k-i-1}+\deg T +\deg E_i)=\deg S_{k-i-2}-\deg S_{k-i}$. Adding them we get the result.\\
	The case for $i=k-1$ is similar to the case $i=0$.
\end{proof}
Chose a sub-bundle $S_{k-1}$ of $W_1\otimes \mc{O}_{\pone}$ given by $S_{k-1} = \oplus_{j=1}^{k-1} \mc{O}(-j)$.
Construct a filtration of $W_1\otimes \mc{O}_{\pone}$ by taking the direct summands of $S_{k-1}: $ $S_i = \oplus_{j=k-i}^{k-1} \mc{O}(-j)$. This induces a map $t:\proj{}^1\rightarrow Fl(1,2,\ldots,k-1;W_1)$. Let $\phi^t:M^{\tau,t}\rightarrow \proj{}^1$ be the base-change via $t$ of the map $\phi$ from Lemma \ref{fibratiobyfanos}.

\begin{lem}\label{primarysection}
	With the above notation, there is a section $s$ of $\phi^t$ such that $s^*T_{\phi}$ is ample and $s^*\mc{N}_i$ is ample.
\end{lem}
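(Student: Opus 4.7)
The plan is to construct the section $s$ explicitly by prescribing its isotropic line sub-bundle $T\subset U\otimes\mathcal{O}_{\mathbb{P}^1}$, and then to verify the two ampleness conclusions separately. With the prescribed filtration $S_i=\bigoplus_{j=k-i}^{k-1}\mathcal{O}_{\mathbb{P}^1}(-j)$ we have $\deg S_i-\deg S_{i-1}=-(k-i)$, so Lemma~\ref{multsofnormalbundle} reduces the normal bundle computation to $a_0=-\deg T+1$, $a_i=2$ for $1\le i\le k-2$, and $a_{k-1}=-\deg T-(k-2)$. Taking $T\cong\mathcal{O}_{\mathbb{P}^1}(-(k-1))$ then yields
$s^*\mathcal{N}_i\cong\mathcal{O}(k)\oplus\mathcal{O}(2)^{\oplus(k-2)}\oplus\mathcal{O}(1)$, which is ample. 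The key observation for the rest of the argument is that any more negative choice of $\deg T$ only strengthens these inequalities, so the normal bundle ampleness decouples from subsequent adjustments to the section.

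For the existence of an initial section $s_0$ with $\deg T=-(k-1)$, a section of $\phi^t$ is the same data as an isotropic degree-$(k-1)$ morphism $\tilde s:\mathbb{P}^1\to H$ (with $H\subset\mathbb{P}^{n-2k}$ the smooth quadric of Lemma~\ref{chainsinog}) satisfying the vanishing conditions for $X$. The family of such morphisms has dimension of order $k(n-2k-1)$, while the $X$-vanishing conditions coming from the explicit bundles in the proof of Proposition~\ref{fibratiobyfanos} impose a number of constraints bounded by a polynomial in $k$ and $d$ (roughly $O(kd^2)$ after pulling back via a degree $k-1$ map). Assumption~\ref{rsshyp} is tailored to ensure the remainder is positive, so such an $s_0$ exists.

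For the tangent bundle condition, I would use that general fibers of $\phi^t$ are rationally connected (Proposition~\ref{fibratiobyfanos}): attach very free rational curves in several general smooth fibers of $\phi^t$ at general points of $s_0(\mathbb{P}^1)$ and smooth the resulting comb by a Kollár-style deformation. Since the attached components map constantly under $\phi^t$, the total degree over $\mathbb{P}^1$ remains $1$, so the smoothed curve is again a section of $\phi^t$. Each vertical attachment contributes a positive summand to $s^*T_\phi$, while it only makes the degree of the corresponding $T$ more negative, so normal bundle ampleness is preserved (indeed strengthened) by the first paragraph. The main obstacle is confirming the smoothability of the comb as a genuine section of $\phi^t$ (rather than merely as a curve in $M^{\tau,t}$) and showing that finitely many vertical attachments suffice to make every summand of $s^*T_\phi$ positive; the smoothability relies on the smoothness of $M^{\tau,t}$ established in the preceding lemma, together with the flexibility of isotropic line sub-bundles of $U\otimes\mathcal{O}_{\mathbb{P}^1}$ to realise the smoothing.
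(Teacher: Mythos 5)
Your normal-bundle computation is exactly the paper's: with $S_i=\bigoplus_{j=k-i}^{k-1}\mc{O}(-j)$, Lemma \ref{multsofnormalbundle} gives $a_i=2$ for $1\le i\le k-2$ and $a_0,a_{k-1}>0$ once $\deg T\le -(k-1)$, and your observation that further decreasing $\deg T$ only helps is the same remark the paper uses ("we may assume $T$ has arbitrarily high degree"). Likewise your comb-smoothing paragraph is essentially the standard strengthening that lies behind the paper's one-line claim that rational connectedness of the general fiber of $\phi^t$ produces a section with $s^*T_\phi$ ample. So the architecture is right.

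The genuine gap is in your existence argument for the initial section $s_0$. You propose to produce a degree-$(k-1)$ map $\tilde s:\pone\to H$ satisfying the $X$-vanishing conditions by a dimension count, but the parameter space of such maps is only quasi-projective, and on a non-proper space a positive expected dimension does not imply the locus cut out by the closed conditions is non-empty; you would need a properness or positivity argument (e.g.\ a nonvanishing top Chern class on a compactified space of maps, with control of the boundary) that you do not supply. Moreover the count itself is doubtful: the equations cutting the fibers of $\phi$ have total degree $kd^2$ on $H$, and pulling them back through a degree-$(k-1)$ map produces on the order of $k^2d^2$ coefficient conditions, not $O(kd^2)$, and the comparison with Assumption \ref{rsshyp} is asserted rather than verified. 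The clean repair---and the paper's actual route---is to apply the Graber--Harris--Starr theorem to the fibration $\phi^t:M^{\tau,t}\to\pone$, whose general fiber is rationally connected by Proposition \ref{fibratiobyfanos}; this gives a section outright, after which your comb argument (attaching very free curves in general fibers and smoothing) yields a section with $s^*T_\phi$ ample and $\deg T$ as negative as needed, and Lemma \ref{multsofnormalbundle} finishes as you say. A secondary slip: the preceding lemma establishes smoothness of $M^{\tau}_{X,x}$ for general $x$, not of the base change $M^{\tau,t}$, so the smoothability of the comb should be justified by the sections lying in the smooth locus of $\phi^t$ and the usual $H^1$-vanishing for combs with very free teeth, not by smoothness of $M^{\tau,t}$ itself.
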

\begin{proof}
	Since the general fiber of $\phi^t$ is rationally connected, there is a section $s$ of $\phi_t$ such that $s^*T_{\phi}$ is ample. Moreover, we may assume that the associated sub line-bundle $T\subset U\otimes \mc{O}_{\proj{}^1}$ has arbitrarily high degree. Applying Lemma \ref{multsofnormalbundle}, we see that $a_i = 2$ for all $1\le i \le k-2$ and $a_0, a_{k-1} > 0$. Hence $s^*\mc{N}_i$ is ample.
\end{proof}

\begin{lem}
	Let $F$ denote the flag variety $Fl(1,2,\ldots,k-1;W_1)$. Let $\mc{S}_i$ be the tautological locally free sheaf of rank $i$ on $F$. There is a filtration of the tangent bundle $T_F$, $0=E_k\subset E_{k-1}\subset\ldots E_1\subset T_F$ such that the $i$-th successive quotient is  isomorphic to $W_1/\mc{S}_i\otimes (\mc{S}_i/\mc{S}_{i-1})^*$.
\end{lem}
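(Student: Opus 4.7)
The plan is to realize the filtration as the one induced by the tower of forgetful maps on partial flag varieties. Writing $F_j := Fl(1,2,\ldots,j;W_1)$ (with $F_0 = \mathrm{pt}$ and $F_{k-1} = F$), there is a chain
\[
F = F_{k-1} \xrightarrow{\pi_{k-1}} F_{k-2} \xrightarrow{\pi_{k-2}} \cdots \xrightarrow{\pi_2} F_1 = \mb{P}(W_1) \to \mathrm{pt}
\]
in which each $\pi_j$ forgets the rank-$j$ step. The key observation is that $\pi_j$ realizes $F_j$ as the projective bundle $\mb{P}(W_1/\mc{S}_{j-1})$ over $F_{j-1}$, with tautological line subbundle $\mc{S}_j/\mc{S}_{j-1}\hookrightarrow W_1/\mc{S}_{j-1}$. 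A standard Euler-sequence computation then gives the relative tangent bundle
\[
T_{F_j/F_{j-1}} \cong (\mc{S}_j/\mc{S}_{j-1})^*\otimes (W_1/\mc{S}_j).
\]

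Next, writing $\pi^{(i)}\colon F\to F_i$ for the composition $\pi_{i+1}\circ\cdots\circ\pi_{k-1}$, I would define
\[
E_i := T_{F/F_i} = \ker\bigl(T_F\twoheadrightarrow (\pi^{(i)})^*T_{F_i}\bigr),\qquad i = 1,\ldots,k-1,
\]
and set $E_0 := T_F$. Because each $\pi^{(i)}$ is smooth (an iterated projective bundle), the $E_i$ are locally free and fit into short exact sequences $0 \to E_i \to T_F \to (\pi^{(i)})^*T_{F_i} \to 0$. The forgetful maps $F_i\to F_{i-1}$ induce surjections $(\pi^{(i)})^*T_{F_i}\twoheadrightarrow (\pi^{(i-1)})^*T_{F_{i-1}}$, which arrange the $E_i$ into the asserted descending chain $T_F \supset E_1 \supset E_2 \supset \cdots \supset E_{k-1} = 0$ (the paper's redundant $E_k=0$ is absorbed here, since $E_{k-1}=T_{F/F}=0$ already).

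Finally, to read off the successive quotients I would pull the relative tangent sequence
\[
0\to T_{F_i/F_{i-1}}\to T_{F_i}\to \pi_i^*T_{F_{i-1}}\to 0
\]
back along $\pi^{(i)}$ and run the snake lemma on the commutative square relating the defining sequences of $E_i$ and $E_{i-1}$. This yields
\[
E_{i-1}/E_i \;\cong\; (\pi^{(i)})^*T_{F_i/F_{i-1}} \;\cong\; (W_1/\mc{S}_i)\otimes (\mc{S}_i/\mc{S}_{i-1})^*,
\]
which is exactly the claim (for $i=1$, interpreting $\mc{S}_0 = 0$). There is essentially no obstacle: the lemma is the standard packaging of iterated relative tangent sequences for a tower of projective bundles. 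The only point needing care is the indexing convention ensuring $E_{k-1}=0$ and that the top quotient $T_F/E_1$ matches the $i=1$ case.
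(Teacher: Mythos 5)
Your proposal is correct and follows essentially the same route as the paper: both define $E_i$ as the kernel of $T_F\rightarrow \phi_i^*T_{F_i}$ for the forgetful maps to the partial flag varieties, use the relative tangent sequence whose kernel is $W_1/\mc{S}_i\otimes(\mc{S}_i/\mc{S}_{i-1})^*$, and extract the successive quotients via the snake lemma. Your extra step identifying each $F_j\rightarrow F_{j-1}$ as the projective bundle $\mb{P}(W_1/\mc{S}_{j-1})$ and invoking the Euler sequence merely justifies the short exact sequence the paper asserts directly.
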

\begin{proof}\label{flagvarietyfiltration}
	There is a sequence of maps $\phi_i:Fl(1,2,\ldots,k-1;W_1)\rightarrow F_{i}=Fl(1,2,\ldots,i;W_1)$ for $1\le i\le k-1$. Pulling back the respective tangent bundles, we note that there is a short exact sequence:
	$$0\rightarrow W_1/\mc{S}_{i}\otimes (\mc{S}_{i}/\mc{S}_{i-1})^*\rightarrow \phi_i^*T_{F_i}\rightarrow \phi_{i-1}^*T_{F_{i-1}}\rightarrow 0$$
	Let $E_i=Ker(d\phi_i:T_F\rightarrow T_{F_i})$. Then $E_i\subset E_{i-1}$ and from the snake lemma $E_{i-1}/E_{i}\cong W_1/\mc{S}_{i}\otimes (\mc{S}_{i}/\mc{S}_{i-1})^*$. This is our desired filtration.
\end{proof}

\begin{prop}\label{ampletevonfibers}
	Let $ev: \mbar(X,\tau)\rightarrow X\times X$ be the natural evaluation map. There are rational curves $s:\proj{}^1\rightarrow \mbar(X,\tau)$ contained in a general fiber of $ev$, passing through a general point, such that $s^*T_{ev}$ is ample and $s^*\mc{N}_i$ is ample.
\end{prop}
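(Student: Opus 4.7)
The plan is to combine Lemma \ref{primarysection} with a computation of $t^*T_{Fl}$ for the flag variety $Fl = Fl(1,\ldots,k-1;W_1)$. First, invoke Lemma \ref{primarysection} to produce a section $s:\pone \to M^{\tau,t}$ of $\phi^t$ with $s^*T_\phi$ and $s^*\mc{N}_i$ both ample. Composing with the inclusion $M^{\tau,t}\hookrightarrow M^{\tau}_{X,x}$ allows us to regard $s$ as a curve in $M^{\tau}_{X,x}$ satisfying $\phi\circ s = t$. Since Proposition \ref{fibratiobyfanos} shows $\phi$ is smooth on a dense open subset of $M^{\tau}_{X,x}$ (its general fibers being Fano), we may arrange for $s$ to lie in the smooth locus and pull back the relative tangent exact sequence for $\phi$ to obtain
\[0 \to s^*T_\phi \to s^*T_{ev} \to t^*T_{Fl} \to 0,\]
where we have used $T_{ev}|_{M^{\tau}_{X,x}} = T_{M^{\tau}_{X,x}}$ because $M^{\tau}_{X,x}$ is a fiber of $ev$.

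The heart of the argument is showing $t^*T_{Fl}$ is ample. Using the filtration of Lemma \ref{flagvarietyfiltration}, the successive quotients of $T_{Fl}$ are $(W_1/\mc{S}_i)\otimes (\mc{S}_i/\mc{S}_{i-1})^*$. By the construction of $t$ (immediately before Lemma \ref{primarysection}), $t^*\mc{S}_i = \oplus_{j=k-i}^{k-1}\mc{O}(-j)$, so $t^*(\mc{S}_i/\mc{S}_{i-1})^* = \mc{O}(k-i)$, while $t^*(W_1/\mc{S}_i)$ is a quotient of the trivial bundle $W_1\otimes \mc{O}_{\pone}$ and hence globally generated. On $\pone$ a globally generated bundle splits as $\oplus \mc{O}(d_j)$ with $d_j\geq 0$, so each graded piece of $t^*T_{Fl}$ is $\oplus \mc{O}(d_j+k-i)$ with every summand of degree at least $k-i\geq 1$, i.e.\ ample.

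To promote this to ampleness of $t^*T_{Fl}$ itself (and ultimately of $s^*T_{ev}$), I would use the elementary fact that on $\pone$ an extension of ample bundles is ample: given $0\to A \to E \to B \to 0$ with $A,B$ sums of line bundles of degree $\geq 1$, the vanishings $H^1(A(-2))=H^1(B(-2))=0$ force $H^1(E(-2))=0$, which means every summand of $E$ has positive degree. Iterating along the filtration gives $t^*T_{Fl}$ ample, and applying the same principle to the outer sequence gives $s^*T_{ev}$ ample. Combined with the ampleness of $s^*\mc{N}_i$ from Lemma \ref{primarysection}, $s$ meets all the requirements; the genericity in the point and the fiber follows from the ampleness of $s^*T_\phi$ (letting the section deform to cover a dense open subset of $M^{\tau,t}$) together with the freedom to choose the pair $x=([W_1],[W_2])$ generically. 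The main obstacle I anticipate is cleanly identifying $t^*(W_1/\mc{S}_i)$ as a globally generated quotient of a trivial bundle; once this is in hand, the cohomological extension-of-ample argument on $\pone$ (which fails in higher dimensions) handles everything else.
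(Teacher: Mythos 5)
Your proposal is correct and follows essentially the same route as the paper: take the section from Lemma \ref{primarysection}, pull back the filtration of Lemma \ref{flagvarietyfiltration} along $t=\phi\circ s$ to see $s^*\phi^*T_{F}$ is ample (graded pieces $(W_1/\mc{S}_i)\otimes(\mc{S}_i/\mc{S}_{i-1})^*$ pulling back to globally generated tensor $\mc{O}(k-i)$), and conclude via the relative tangent sequence that $s^*T_{ev}$ is ample. One small correction: the deformation to a general point of $M^{\tau}_{X,x}$ (not merely of $M^{\tau,t}$) should be justified by the ampleness of $s^*T_{ev}=s^*T_{M^{\tau}_{X,x}}$ itself, which you have already established, rather than by the ampleness of $s^*T_{\phi}$ alone.
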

\begin{proof}
	Choose a general pair of points $x=([W_1],[W_2])\in X\times X$ such that $M^{\tau}_{X,x}$ is non-singular. From Lemma \ref{primarysection} let $s:\proj{}^1\rightarrow M^{\tau}_{X,x}$ be a rational curve such that $s^*T_{\phi}$ is ample. Let $F=Fl(1,2,\ldots,k-1)$. Pulling back the filtration of Lemma \ref{flagvarietyfiltration} via $\phi\circ s=t$, $(S_{i}/S_{i-1})^*$ is ample and $W_1/S_i$ is globally generated. Thus $s^*\phi^*T_F$ is ample. There is a short exact sequence on $\proj{}^1$:
	$$0\rightarrow s^*T_{\phi}\rightarrow s^*T_{M^{\tau}_{X,x}}\rightarrow s^*\phi^*T_F\rightarrow 0$$
	Thus $s^*T_{M^{\tau}_{X,x}}\cong s^*T_{ev}$ is ample. Since $s^*T_{M^{\tau}_{X,x}}$ is ample, we may deform $s$ to pass through a general point in $M^{\tau}_{X,x}$.
\end{proof}

\begin{prop}\label{rationallyconnected}
	There is an irreducible component $M_{2,k+1}\subset\mbar_{0,2}(X,k+1)$ such that geometric generic fiber of $ev_2:M_{2,k+1}\rightarrow X\times X$ is rationally connected.
\end{prop}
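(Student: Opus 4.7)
The goal is to exhibit an irreducible component $M_{2,k+1} \subset \mbar_{0,2}(X, k+1)$ together with an $ev_2$-relatively very free rational curve meeting a general point of $M_{2,k+1}$. Once this is in hand, the relative version of the Graber--Harris--Starr / Koll\'ar theorem (compare Proposition \ref{veryfreerationallyconnected} and the relative statement just before Definition \ref{rss}) immediately gives that the geometric generic fiber of $ev_2|_{M_{2,k+1}}$ is rationally connected. The chain boundary $\mbar(X, \tau) \hookrightarrow \mbar_{0,2}(X, k+1)$ sits inside such a component $M_{2,k+1}$ by Lemma \ref{uniqueirredcomp}, so the natural strategy is to produce the relatively very free curve in $\mbar(X,\tau)$ and push it into $\mbar_{0,2}(X, k+1)$ using the short exact sequence
$$0 \to \phi^* T_{ev} \to (\phi \circ i)^* T_{ev_2} \to \phi^* \mc{N}_i \to 0.$$
Thus it suffices to construct $\phi : \proj{}^1 \to \mbar(X,\tau)$ passing through a general point of $X \times X$ such that both $\phi^*T_{ev}$ and $\phi^*\mc{N}_i$ are ample; the extension will then be ample as well.

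The first building block is already available: Proposition \ref{ampletevonfibers} yields rational curves $s : \proj{}^1 \to \mbar(X,\tau)$ contained in a general fiber of $ev$, through a general point, with $s^*T_{ev}$ ample and $s^*\mc{N}_i$ ample. The second building block is a curve $\phi_0 : \proj{}^1 \to \mbar(X,\tau)$ whose image under $ev$ passes through a general point of $X \times X$ and for which $\phi_0^* \mc{N}_i$ is ample. I would construct $\phi_0$ by an iterative twisting procedure of length $k+1$: starting from a twistable curve $\zeta_0 \subset X$ of sufficiently high twistability (available by Corollary \ref{allkindsofcurves} after taking multiple covers of the twistables from Proposition \ref{twistablesinx}), choose a section over $\zeta_0$ in $\mbar_{0,1}(X,1)$, then at each stage use Lemma \ref{uniontwistable} to upgrade the swept-out curve $\zeta_{j+1}$ produced by the second marked section to another twistable curve. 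Using Lemma \ref{descnormalbundle} together with the explicit degree computation of Lemma \ref{multsofnormalbundle}, one arranges at each step that the normal-bundle factors $L_j = \mc{N}_{\sigma_{j,2}} \otimes \mc{N}_{\sigma_{j+1,1}}$ have positive degree, so that $\phi_0^*\mc{N}_i = \bigoplus_j L_j$ is ample. The generality required of $\zeta_0$ (and of each intermediate section) can be arranged because the relative tangent bundle of $ev_1$ is globally generated at points of the twistable curves, allowing deformation through a general pair of points.

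To combine the two building blocks, attach several of the fiber curves $s_1, \ldots, s_m$ from Proposition \ref{ampletevonfibers} to $\phi_0(\proj{}^1)$ at nodes lying over general points of $\mbar(X,\tau)$, and smooth the resulting comb. The attachment / smoothing lemmas of Koll\'ar (\cite[II.7.5, II.7.9, II.7.10]{kol}) produce a deformation $\phi : \proj{}^1 \to \mbar(X,\tau)$ with $\phi^*T_{ev}$ ample (since $T_{ev}$ is ample on every rational teeth $s_j$ and the teeth sit in general fibers of $ev$) and with $\phi^*\mc{N}_i$ still ample (ampleness on $\phi_0$ combined with global generation of $\mc{N}_i$ along the fiber teeth, which holds by construction in Proposition \ref{ampletevonfibers}). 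Feeding $\phi$ into the short exact sequence above produces the desired $ev_2$-relatively very free curve.

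The main technical obstacle is the iterative construction of $\phi_0$: one must keep track simultaneously of the degree of the normal bundle of the moving section, the twistability of the curves $\zeta_j$ being swept out, and the freeness of the fiber lines being used, over $k+1$ stages. The combinatorial bookkeeping is exactly the content of Lemma \ref{descnormalbundle} and Lemma \ref{multsofnormalbundle}, and the twistability carries through by Lemma \ref{uniontwistable} as long as the initial twistability is at least $(k+1)^2$, which is guaranteed by Corollary \ref{allkindsofcurves} under Assumption \ref{rsshyp}. Once $\phi_0$ is in place, the combining step is standard.
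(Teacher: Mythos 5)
Your overall comb strategy---produce a curve in $\mbar(X,\tau)\subset M_{2,k+1}$ along which both $T_{ev}$ and $\mc{N}_i$ are ample, feed it into the exact sequence relating $T_{ev}$, $T_{ev_2}$ and $\mc{N}_i$, and conclude by the relative rational-connectedness criterion---is sound in outline and shares the key ingredients (Proposition \ref{ampletevonfibers}, that exact sequence, Lemma \ref{relativelyveryfreefibers}) with the paper. The gap is in your construction of the handle $\phi_0$. You assert that the positivity of the factors $L_j=\mc{N}_{\sigma_{j,2}}\otimes\mc{N}_{\sigma_{j+1,1}}$ along the iterative family can be arranged ``using Lemma \ref{descnormalbundle} together with the explicit degree computation of Lemma \ref{multsofnormalbundle}.'' But Lemma \ref{multsofnormalbundle} computes the degrees $a_j$ only for a family of chains lying inside one fixed fiber $M^{\tau}_{X,x}$, where the chain is encoded by a filtration $S_\bullet\subset W_1\otimes\mc{O}_{\pone}$ and a line subbundle $T\subset U\otimes\mc{O}_{\pone}$ as in Lemma \ref{chainsinog}; it says nothing about a family of chains whose two endpoints sweep out twistable curves in $X$. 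For such a moving handle you would need separate inputs that neither the paper nor your sketch provides: sections of the intermediate twisting surfaces with prescribed normal-bundle degree, a supply of rational curves in the fibers of $ev_1$ along which the marked-point section has normal bundle of degree $-1$ (so that attaching them and smoothing lets you tune $\deg\mc{N}_{\sigma}$ downward at each stage), and control of the twisting order of the swept-out curves $\zeta_j$ over all $k+1$ stages (multiple covers plus Lemma \ref{uniontwistable}, which only applies when two twisting surfaces share a fiber line). As written, the claims that $\phi_0^*\mc{N}_i$ is ample and that $ev\circ\phi_0$ passes through a general point of $X\times X$ are therefore unsupported.

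The paper's actual proof sidesteps this construction entirely, and you could too. It chooses a rational curve $h:\pone\to X\times X$ through a general point, base changes $ev_{\tau}$ and $ev_2$ along $h$ and desingularizes; since the fibers $M^{\tau}_{X,x}$ are rationally connected (Proposition \ref{fibratiobyfanos}), the Graber--Harris--Starr theorem gives a section $g$ of the chain family over $\pone$. Because the curves of Proposition \ref{ampletevonfibers} lie in general fibers of $ev_{\tau}$ and have both $T_{ev}$ and $\mc{N}_i$ ample, attaching enough of them to $g$ and deforming makes $g^*T_{ev_{\tau}}$ and the restriction of the normal bundle of the chain locus ample, regardless of the positivity of the original section; the exact sequence then gives $g^*T_{ev_2}$ ample, and Lemma \ref{relativelyveryfreefibers} applied to the base-changed, desingularized family over $\pone$ finishes the proof. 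In other words, all the required positivity can be imported from the fiber curves, so no handle with preexisting ample $\mc{N}_i$ through a general point of $X\times X$ is needed; if you want to keep your route, you must supply the missing lemmas listed above.
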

\begin{proof}
	We chose $M_{0,k+1}$ to be the unique irreducible component of $\mbar_{0,0}(X,k+1)$ defined in Lemma \ref{uniqueirredcomp}. Let $M_{2,k+1}$ be the component of $\mbar_{0,2}(X,k+1)$ dominating this component. In particular, $\mbar(X,\tau)\subset M_{2,k+1}$. Let $ev_{\tau}$ and $ev_2$ be the corresponding evaluation maps. Chose a rational curve  $h:\pone \rightarrow X\times X$ passing through a general point. Let $M_{\tau,h}$ and $M_{k+1,h}$ be the base-change of $h$ via $ev_{\tau}$ and $ev_2$ respectively. Let $M^s_{\tau,h}$ and $M^s_{k+1,h}$ be the corresponding desingularizations. Note that this does not change the generic fibers over $ev_{\tau}$ and $ev_2$. We abuse our notation to continue to use the names $ev_{\tau}$ and $ev_2$ for the maps from the desingularization to $\pone$. Let $g:\pone\rightarrow  M^s_{\tau,h}$ be a section of $ev_{\tau}$. Then $g(\pone)$ passes through the smooth locus of $ev_{\tau}$ and therefore the smooth locus of $ev_{k+1}$. Moreover, we might attach enough rational curves from Lemma \ref{ampletevonfibers} and deform to ensure that $g^*T_{ev_{\tau}}$ is ample and the normal bundle of $M^s_{\tau,h} \subset M^s_{k+1,h}$ is ample (observe that this normal bundle is simply $\mc{N}_i$ for the rational curves lying on a general fiber). Thus, we have that $g^*T_{ev_{k+1}}$ is ample as well. We use the following Lemma to complete the proof.
\end{proof}
\begin{lem}\label{relativelyveryfreefibers}
	Let $\pi: Y\rightarrow \proj{}^1$ be a morphism of algebraic spaces. Assume that there exists a section $s: \proj{}^1 \rightarrow Y$ lying in the smooth locus of $\pi$, such that $s^*T_{\pi}$ is an ample vector bundle. Then: a general fiber of $\pi$ is rationally connected.
\end{lem}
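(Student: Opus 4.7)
The plan is to construct a $\pi$-relatively very free rational curve lying in a general fiber of $\pi$, whence rational connectedness of that fiber follows from Proposition \ref{veryfreerationallyconnected}. The building block will be the section $s$ and its deformations, which have ample normal bundle $s^*T_\pi$. The tools are Mori's bend-and-break and Kollár's comb-smoothing, together with a base-change trick to amplify positivity.

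First I would reduce to the case where the degree of $s^*T_\pi$ is arbitrarily large. For any $N\geq 1$, take a degree $N$ cover $\phi:\pone\rightarrow\pone$ and form the fibered product $\pi':Y\times_{\pone}\pone\rightarrow\pone$: its fibers coincide with those of $\pi$, and $s$ lifts to a section $s'$ of $\pi'$ with $(s')^*T_{\pi'}=\phi^*s^*T_\pi$, still ample but with degree scaled by $N$. So I may assume $s^*T_\pi=\bigoplus_i\mc{O}(a_i)$ with all $a_i$ as large as needed, making the deformation theory of $s$ unobstructed and the various evaluation maps smooth.

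Next, I would consider the Hom-scheme of sections of $\pi$ passing through the two prescribed fiber points $s(0)\in Y_0$ and $s(\infty)\in Y_\infty$. Ampleness of $s^*T_\pi$ implies this scheme is smooth at $[s]$ of positive dimension $\deg s^*T_\pi - \mathrm{rank}\, T_\pi$, so Mori's bend-and-break yields a reducible degeneration within this family. Since each member maps with degree $1$ onto $\pone$, any reducible limit must consist of a section together with one or more rational curves contained entirely in fibers of $\pi$. This produces vertical rational curves in fibers of $\pi$.

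Finally, I would attach many such vertical rational curves as teeth, with a section of $\pi$ as handle, to form a comb, and apply Kollár's comb-smoothing (Chapter II.7) together with a further bend-and-break with many imposed vertical incidence conditions to produce a rational curve lying inside a single general fiber and with ample tangent-bundle pullback inside that fiber, i.e. a very free rational curve in $Y_t$ for general $t\in\pone$. Proposition \ref{veryfreerationallyconnected} then gives rational connectedness of the general fiber. The main obstacle lies in this last step: tracking how the ampleness of $s^*T_\pi$ along the section propagates, through the comb construction and the final degeneration, into sufficient positivity for the resulting vertical rational curve inside its fiber; this is a standard but delicate application of relative deformation theory.
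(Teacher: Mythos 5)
Your first stage is essentially the paper's: using ampleness of $s^*T_{\pi}$ (the paper fixes one point and uses the morphism to $Mor(\pone,\pone)_{0\mapsto 0}$ plus the rigidity lemma, you fix two points and invoke bend-and-break, and your base-change trick is harmless), one degenerates the section into a section plus vertical components and concludes that a general fiber of $\pi$ is uniruled. The genuine gap is in your last step, where you pass from uniruledness to rational connectedness. What you get from the degeneration is vertical rational curves through general points, hence at best \emph{free} curves in the fibers; you then claim that comb-smoothing with a section as handle, followed by another bend-and-break with vertical incidence conditions, produces a \emph{very free} curve inside a single fiber. No mechanism is given for this: smoothing a comb whose handle is a section always yields another section, and re-degenerating it only returns vertical pieces with no positivity control beyond freeness. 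Producing a very free vertical curve is equivalent to the rational connectedness you are trying to prove, so "tracking how the ampleness of $s^*T_\pi$ propagates" is not a delicate standard step but precisely the missing content.

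The paper closes this gap differently: it takes the relative MRC quotient $g:Z\rightarrow\pone$, $f:Y\dashrightarrow Z$, notes by \cite[Theorem 1.1]{ghs} that the fibers of $g$ are not uniruled, and then reuses the positivity of the section — the composite $f\circ s$ is a section of $g$ and the induced map $s^*T_{\pi}\rightarrow (f\circ s)^*T_g$ has torsion cokernel, so $(f\circ s)^*T_g$ is again ample. If $Z$ had positive-dimensional fibers, the first-stage argument applied to $g$ would make its fibers uniruled, a contradiction; hence $g$ is an isomorphism and the general fiber of $\pi$ is rationally connected. Some argument of this type (a second use of the section's positivity on the MRC quotient, or an appeal to a GHS-type theorem) is needed to complete your proof; as written, your proposal stops at uniruledness.
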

\begin{proof}
	Consider the space $Mor(\pone,Y)$, parametrizing morphisms from $\pone$ to $Y$. Denote by $T_Y$ the dual to $\Omega_Y$. There is a short exact sequence on $\proj{}^1$:
	$$0\rightarrow s^*T_{\pi}\rightarrow s^*T_Y\rightarrow T_{\pone}\rightarrow 0$$
	Thus $s^*T_Y$ is ample and $[s]$ is an unobstructed point in $Mor(\proj{}^1,Y)$. Let $M$ denote the irreducible component of $Mor(\proj{}^1,Y)$ containing $[s]$. By considering the degree of $s^{*}\pi^{*}\mc{O}(1)$, we see that every point of $M$ parameterizes a section of $\pi$. Since the evaluation map $M\times \proj{}^1\rightarrow Y$ is smooth at all points $([s],t)$ for $t\in \proj{}^1$, we conclude that there is a section $s^{\prime}$ passing through a general point of $Y$ such that $s^{\prime*}T_{\pi}$ is ample. Thus we may assume $s(0)=p$ for a general point $p\in Y$. Consider the space of morphisms $Mor(\proj{}^1,Y)|_{0\mapsto p}$ from $\proj{}^1$ to $Y$ with $0\in \proj{}^1$ mapping to $p\in Y$. There is a morphism $Mor(\pi):Mor(\proj{}^1,Y)|_{0\mapsto p}\rightarrow Mor(\pone,\pone)_{0\mapsto 0}$. Let $M_1$ and $M_2$ denote the irreducible components of  $Mor(\proj{}^1,Y)|_{0\mapsto p}$ and $Mor(\pone,\pone)_{0\mapsto 0}$ containing $[s]$ and $[id]$ respectively. The tangent spaces of $M_1$ and $M_2$ at $[s]$ and $[id]$ are $H^0(\pone,s^*T_Y(-1))$ and $H^0(\pone,T_{\pone}(-1))$ respectively. Twisting the above short exact sequence by $\mc{O}_{\pone}(-1)$ and taking the long exact sequence in cohomology, we see that $Mor(\pi)$ is smooth at $[s]$ and has fiber dimension $h^0(\pone,s^*T_{\pi}(-1))\ge 1$. Thus, by an application of the rigidity lemma (\cite[p. ~43]{abelianvarieties}), $s$ must specialize to a reducible curve $\tilde{s}:C\rightarrow Y$ with at least two irreducible components passing through $p$. Again, by considering the degree of $\tilde{s}^{*}\pi^{*}\mc{O}(1)$, $C$ has exactly one component $C_0$ such that $\tilde{s}|_{C_0}$ is a section of $\pi$ and the other components are contracted by $\pi$. Thus there is a rational curve through $p$ contained in the fiber over $\pi$. Since $p$ was a general point, a general fiber of $\pi$ is uniruled.\\
	Now consider the relative MRC quotient (see \cite{relmrc} for a construction): $g:Z\rightarrow \proj{}^1$, an algebraic space over $\proj{}^1$ and $f:Y\dashrightarrow Z$ a rational map such that $g\circ f=\pi$. Let $U\subset Y$ be the locus where $f$ is defined. A general fiber of $f$ is birationally rationally connected and any rational curve meeting a general fiber of $f$ is contracted by $f$. Therefore, by \cite[Theorem 1.1]{ghs} the fibers of $g$ are not uniruled. Now, we may assume, $s(\proj{}^1)$ meets $U$. Thus $f\circ s$ is a section of $g$ and hence lies in the smooth locus of $g$. Moreover $s(\proj{}^1)\cap U$ also lies in the locus where $f$ is smooth. Thus pulling back the relative tangent bundles to $\proj{}^1$, there is a map:
	$$s^*T_{\pi}\rightarrow (f\circ s)^* T_g$$
	This is a surjection over $s^{-1}(s(\proj{}^1)\cap U)$ and hence the co-kernel is a torsion sheaf. Since $s^*T_{\pi}$ is ample, this can only happen if $(f\circ s)^* T_g$ is ample as well. Thus the argument above proving uniruledness of fibers of $\pi$ also applies to $g$ as long as $Z$ is at least $2$-dimensional. This is a contradiction, hence we must have that $g$ is an isomorphism. Therefore a general fiber of $\pi$ is rationally connected.
\end{proof}

\section{Proof of Main Theorem}\label{sectionmaintheorem}

Proof of the main theorem depends on the following lemma that allows us to induct from degree $e$ rational curves to degree $e+1$ rational curves.
\begin{lem}\cite[Lemma 8.6]{nk1006}\label{inductionstatement} Let $M_{2,e}$ be the irreducible component defined in Lemma \ref{uniqueirredcomp} for $e\ge 1$. Assume the space of lines through a general point in $X$ is irreducible and rationally connected. Assume that there is a twistable curve in $X$ of degree $e$ such that the corresponding family $\Sigma$ is isomorphic to $\pone\times\pone$. Furthermore, assume that the map:
	$$ev_{e}: M_{2,e}\rightarrow X\times X$$
	has rationally connected geometric generic fiber. Then the map $$ev_{e+1}: M_{2,e+1}\rightarrow X\times X$$ has rationally connected geometric generic fiber as well.
\end{lem}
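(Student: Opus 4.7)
The strategy is to degenerate a generic two-pointed degree $e+1$ rational curve joining two general points to a nodal chain consisting of a degree $e$ curve plus a line, use the inductive hypothesis together with \cite[Theorem 1.1]{ghs} to show that the space of such chains fibers over $X\times X$ with rationally connected fibers, and use the twistable curve of degree $e$ to show that the smooth locus is rationally connected to this boundary.

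Let $B\subset M_{2,e+1}$ denote the closure of the boundary stratum whose generic point parametrizes a two-pointed nodal chain $(C_e,p,r)\cup_r(L,r,q)$ with $[C_e]\in M_{2,e}$ and $L$ a line. First I would verify that $B$ really lies in $M_{2,e+1}$: the twistable curve of degree $e$ together with a free line attached at the endpoint, via Lemma \ref{uniontwistable}, produces a smoothing of the generic chain into $M_{0,e+1}$ in the sense of Lemma \ref{uniqueirredcomp}, so its two-pointed enrichment lies in $M_{2,e+1}$. Next, for a general $x=(p,q)\in X\times X$, let $B_x\subset ev_{e+1}^{-1}(x)$ be the fiber. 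Projecting onto the line component gives a morphism $B_x\to T$, where $T$ is a $\pone$-bundle over $ev_1^{-1}(q)$ (parametrizing a line through $q$ with an auxiliary second marked point $r$ on it). By hypothesis $ev_1^{-1}(q)$ is rationally connected, hence $T$ is too. Over a general $(L,r)\in T$ the fiber is the fiber of $ev_e|_{M_{2,e}}$ over $(p,r)$, which is rationally connected by the inductive hypothesis. Applying \cite[Theorem 1.1]{ghs} yields that $B_x$ is rationally connected.

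To connect a general point $[f]\in ev_{e+1}^{-1}(x)$ lying in the smooth locus to $B_x$ by a rational curve inside the fiber, I would exploit the $1$-twisting surface $\Sigma_0\cong\pone\times\pone$ of class $([L],\beta_e)$ associated with the twistable curve of degree $e$. The complete linear system $|F'+F|$ on $\Sigma_0$ contains a reducible member whose image under $h$ is precisely a chain in $B$ (image of $F'$ being the twistable degree $e$ curve, image of $F$ being a line), while generic members map to smooth rational curves of degree $e+1$. This linear system thus gives a $\pone$-family in $\mbar_{0,2}(X,e+1)$ linking a smooth map to a chain in $B$, i.e. a rational curve in $ev_{e+1}^{-1}(x)$ from a smooth point to a point of $B_x$. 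Lemma \ref{deformtwistable} allows the twisting surface itself to be deformed so that the chain member varies through a dense subset of $B_x$, and the endpoint on the smooth side can be adjusted using Lemma \ref{uniontwistable} to attach additional free lines; together these should produce enough rational curves to dominate the smooth locus of $ev_{e+1}^{-1}(x)$.

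Combining the two main steps, every general point of $ev_{e+1}^{-1}(x)$ is connected by a rational curve inside the fiber to the rationally connected subvariety $B_x$, whence $ev_{e+1}^{-1}(x)$ is rationally connected. The principal obstacle is the third step: one must show that the parameter space of deformed twisting surfaces, together with the $|F'+F|$-pencils they carry, is large enough to produce rational curves through a general smooth point of $ev_{e+1}^{-1}(x)$. This is where the hypothesis $\Sigma\cong\pone\times\pone$ is essential, since Lemma \ref{deformtwistable} applies only to that case and provides the needed deformation space, while Lemma \ref{uniontwistable} controls how free lines can be grafted on to adjust the smooth endpoint. A careful dimension count comparing the parameters of twisting surfaces and their pencils against the dimension of the smooth locus of $ev_{e+1}^{-1}(x)$ finishes the argument.
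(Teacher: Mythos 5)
The paper does not actually prove this statement: it is quoted verbatim from \cite[Lemma 8.6]{nk1006}, so there is no in-house proof to compare against, and your sketch has to stand on its own. Its overall architecture is the standard one behind the cited result, and two of your three steps are essentially right: the boundary locus $B$ of chains (degree $e$ curve through $p$)$\,\cup\,$(line through $q$) does lie in $M_{2,e+1}$, and the pencil of $(1,1)$-divisors through the two marked points on a deformed $1$-twisting surface of class $([L],\beta_e)$ does give a rational curve inside $ev_{e+1}^{-1}(p,q)$ joining a general interior point to a boundary chain. For that last step no ``careful dimension count'' is needed: Lemma \ref{deformtwistable} already says that every small deformation of the divisor $D\equiv F'+F$ lies on a $1$-twisting surface, and since $M_{2,e+1}$ is irreducible this covers a dense open subset of it; what you do need to check, and do not, is that the smoothed member of $|F'+F|$ on the original surface lies in the distinguished component of Lemma \ref{uniqueirredcomp} (i.e.\ that the degree $e$ twistable curve itself lies in $M_{0,e}$), and you should cite the corollary of \cite{ghs} on fibrations with rationally connected base and fibers rather than Theorem 1.1.

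The genuine gap is in your second step. The fibers of $B_x\to T$ are fibers of $ev_e$ over pairs $(p,r)$, where $r$ is the attachment point and hence lies on the union of lines through the \emph{fixed} general point $q$. That union is a proper closed subvariety of $X$ (of dimension $n-k-d-1$, far below $\dim X$), so $(p,r)$ is not a geometric generic point of $X\times X$, and the inductive hypothesis --- which only concerns the geometric generic fiber of $ev_e$ --- says nothing about these fibers. One cannot repair this by semicontinuity: from generic rational connectedness one only gets rational \emph{chain} connectedness of the fibers over some dense open of $X\times X$, and there is no a priori reason that open set meets $\{p\}\times(\text{cone of lines through }q)$, nor that the fibers there are genuinely rationally connected (they may be singular, so RCC does not suffice for the GHS application). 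So as written the claim ``$B_x$ is rationally connected'' is unjustified, and this is precisely where the real content of \cite[Lemma 8.6]{nk1006} lies: a correct argument must either strengthen the induction (controlling fibers of $ev_e$ over pairs with one point constrained to the cone of lines through a general point), set up the fibration of the boundary differently, or establish rational connectedness of the reachable boundary locus by some other mechanism. Until that point is addressed, the proposal does not prove the lemma.
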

\begin{proof}[Proof of Theorem \ref{mainthm}]
	The first condition from Definition \ref{rss} follows from Theorem \ref{irredlines}. The existence of a very twistable curve in $X\subset OG$ now follows from Proposition \ref{twistablesinx}. Proposition \ref{rationallyconnected} provides the base case for rationally connectedness of fibers of the evaluation maps. Now, from Corollary \ref{allkindsofcurves} and Lemma \ref{inductionstatement}, we conclude that $ev:M_{2,e}\rightarrow X\times X$ has rationally connected geometric generic fiber for all $e\ge k+1$. This concludes the proof of the theorem.
\end{proof}
\bibliography{refs}{}
\bibliographystyle{amsalpha}
\end{document}